\documentclass[12pt]{amsart}

\textwidth=14.5cm
\oddsidemargin=1cm
\evensidemargin=1cm
\usepackage{amsmath}
\usepackage{amsxtra}
\usepackage{amscd}
\usepackage{amsthm}
\usepackage{amsfonts}
\usepackage{amssymb}
\usepackage{mathrsfs}
\usepackage{eucal}
\usepackage{color}
\usepackage[all]{xy}
\usepackage{enumerate}
\usepackage[colorlinks=true, pdfstartview=FitV, linkcolor=blue, citecolor=blue, urlcolor=blue]{hyperref}


\newcommand{\nc}{\newcommand}
\newcommand{\rc}{\renewcommand}

\numberwithin{equation}{section}

\theoremstyle{plain}
\newtheorem{thm}{Theorem} [section]
\newtheorem{prop}[thm]{Proposition}
\newtheorem{lem}[thm]{Lemma}

\newtheorem{conj}[thm]{Conjecture}
\newtheorem{assumption}[thm]{Assumption}

\newtheorem{important note}[thm]{Important Note}
\newtheorem{problem}[thm]{Problem}

\theoremstyle{definition}
\newtheorem{rmk}[thm]{Remark}

\nc{\on}{\operatorname}

\nc{\Lemma}{\begin{lem}}
\nc{\enlemma}{\end{lem}}
\nc{\Proof}{\begin{proof}}
\nc{\QED}{\end{proof}}
\nc{\Prop}{\begin{prop}}
\nc{\enprop}{\end{prop}}


\nc{\bA}{{\mathbb A}}
\nc{\bB}{{\mathbb B}}
\nc{\bC}{{\mathbb C}}
\nc{\bD}{{\mathbb D}}
\nc{\bE}{{\mathbb E}}
\nc{\bF}{{\mathbb F}}
\nc{\bG}{{\mathbb G}}
\nc{\bH}{{\mathbb H}}
\nc{\bI}{{\mathbb I}}
\nc{\bJ}{{\mathbb J}}
\nc{\bK}{{\mathbb K}}
\nc{\bL}{{\mathbb L}}
\nc{\bM}{{\mathbb M}}
\nc{\bN}{{\mathbb N}}
\nc{\bO}{{\mathbb O}}
\nc{\bP}{{\mathbb P}}
\nc{\bQ}{{\mathbb Q}}
\nc{\bR}{{\mathbb R}}
\nc{\bS}{{\mathbb S}}
\nc{\bT}{{\mathbb T}}
\nc{\bU}{{\mathbb U}}
\nc{\bV}{{\mathbb V}}
\nc{\bW}{{\mathbb W}}
\nc{\bZ}{{\mathbb Z}}
\nc{\bX}{{\mathbb X}}
\nc{\bY}{{\mathbb Y}}

\newcommand{\bbZ}{\mathbb{Z}}


\nc{\cA}{{\mathcal A}}
\nc{\cB}{{\mathcal B}}
\nc{\cC}{{\mathcal C}}
\nc{\cD}{{\mathcal D}}
\nc{\cE}{{\mathcal E}}
\nc{\cF}{{\mathcal F}}
\nc{\cG}{{\mathcal G}}
\nc{\cH}{{\mathcal H}}
\nc{\cI}{{\mathcal I}}
\nc{\cJ}{{\mathcal J}}
\nc{\cK}{{\mathcal K}}
\nc{\cL}{{\mathcal L}}
\nc{\cM}{{\mathcal M}}
\nc{\cN}{{\mathcal N}}
\nc{\cO}{{\mathcal O}}
\nc{\cP}{{\mathcal P}}
\nc{\cQ}{{\mathcal Q}}
\nc{\cR}{{\mathcal R}}
\nc{\cS}{{\mathcal S}}
\nc{\cT}{{\mathcal T}}
\nc{\cU}{{\mathcal U}}
\nc{\cV}{{\mathcal V}}
\nc{\cW}{{\mathcal W}}
\nc{\cZ}{{\mathcal Z}}
\nc{\cX}{{\mathcal X}}
\nc{\cY}{{\mathcal Y}}

\nc{\fA}{{\mathfrak A}}
\nc{\fB}{{\mathfrak B}}
\nc{\fC}{{\mathfrak C}}
\nc{\fD}{{\mathfrak D}}
\nc{\fE}{{\mathfrak E}}
\nc{\fF}{{\mathfrak F}}
\nc{\fG}{{\mathfrak G}}
\nc{\fH}{{\mathfrak H}}
\nc{\fI}{{\mathfrak I}}
\nc{\fJ}{{\mathfrak J}}
\nc{\fK}{{\mathfrak K}}
\nc{\fL}{{\mathfrak L}}
\nc{\fM}{{\mathfrak M}}
\nc{\fN}{{\mathfrak N}}
\nc{\fO}{{\mathfrak O}}
\nc{\fP}{{\mathfrak P}}
\nc{\fQ}{{\mathfrak Q}}
\nc{\fR}{{\mathfrak R}}
\nc{\fS}{{\mathfrak S}}
\nc{\fT}{{\mathfrak T}}
\nc{\fU}{{\mathfrak U}}
\nc{\fV}{{\mathfrak V}}
\nc{\fW}{{\mathfrak W}}
\nc{\fZ}{{\mathfrak Z}}
\nc{\fX}{{\mathfrak X}}
\nc{\fY}{{\mathfrak Y}}
\nc{\fa}{{\mathfrak a}}
\nc{\fb}{{\mathfrak b}}
\nc{\fc}{{\mathfrak c}}
\nc{\fd}{{\mathfrak d}}
\nc{\fe}{{\mathfrak e}}
\nc{\ff}{{\mathfrak f}}
\nc{\fg}{{\mathfrak g}}
\nc{\fh}{{\mathfrak h}}
\nc{\fiI}{{\mathfrak i}}  
\nc{\ffi}{{\mathfrak i}}  
\nc{\fj}{{\mathfrak j}}
\nc{\fk}{{\mathfrak k}}
\nc{\fl}{{\mathfrak{l}}}
\nc{\fm}{{\mathfrak m}}
\nc{\fn}{{\mathfrak n}}
\nc{\fo}{{\mathfrak o}}
\nc{\fp}{{\mathfrak p}}
\nc{\fq}{{\mathfrak q}}
\nc{\fr}{{\mathfrak r}}
\nc{\fs}{{\mathfrak s}}
\nc{\ft}{{\mathfrak t}}
\nc{\fu}{{\mathfrak u}}
\nc{\fv}{{\mathfrak v}}
\nc{\fw}{{\mathfrak w}}
\nc{\fz}{{\mathfrak z}}
\nc{\fx}{{\mathfrak x}}
\nc{\fy}{{\mathfrak y}}

\nc{\al}{{\alpha }}
\nc{\ga}{{\gamma }}
\nc{\de}{{\delta }}
\nc{\del}{{\partial }}
\nc{\ep}{{\varepsilon }}
\nc{\vap}{{\epsilon }}

\nc{\ze}{{\zeta }}
\nc{\et}{{\eta }}
\rc{\th}{{\theta }}

\nc{\vth}{{\vartheta }}

\nc{\io}{{\iota }}
\nc{\ka}{{\kappa }}
\nc{\la}{{\lambda }}
\nc{\vrho}{{\varrho}}
\nc{\si}{{\sigma }}
\nc{\ups}{{\upsilon }}
\nc{\vphi}{{\varphi }}
\nc{\om}{{\omega }}

\nc{\Ga}{{\Gamma }}
\nc{\De}{{\Delta }}
\nc{\nab}{{\nabla}}
\nc{\Th}{{\Theta }}
\nc{\La}{{\Lambda }}
\nc{\Si}{{\Sigma }}
\nc{\Ups}{{\Upsilon }}
\nc{\Om}{{\Omega }}

\newcommand{\A}{{\mathcal A}}

\newcommand{\M}{{\mathcal M}}

\nc{\hE}{\widehat{\cE}}
\nc{\hA}{\widehat{A}}
\nc{\hK}{\widehat{K}}
\nc{\hM}{\widehat{\cM}}
\nc{\hN}{\widehat{\cN}}
\nc{\hL}{\widehat{\cL}}
\nc{\hF}{\hat{\cF}}
\nc{\hcA}{\widehat\cA}
\nc{\hcK}{\widehat\cK}
\nc{\tN}{\widetilde\cN}
\nc{\tM}{\widetilde\cM}

\nc{\Coh}{{{\mathcal C}oh}}
\nc{\Loc}{{{\mathcal L}oc}}
\nc{\GR}{{G_\bR}}

\newcommand{\Ext}{{\operatorname {Ext}}}
\newcommand{\ct}{{T^*X}}

\newcommand{\imbed}{\hookrightarrow}

\newcommand{\To}[1][{\quad}]{\xrightarrow{\;#1\;}}

 \providecommand{\curlybrackets}[1]{{\{\!\{#1\}\!\}}}
 \nc{\cb}{\curlybrackets}

\nc{\str}{{\hcA}}

\nc{\Spec}{{\on{Spec}}}
\nc{\cext}{{\mathscr{E}\mspace{-2mu}xt}}
\nc{\ctor}{{\cT\mathit o\mathit r}}
\nc{\crhom}{{\operatorname{R}\cH\mathit o\mathit m}}
\newcommand{\chom}{\mathscr{H}\mspace{-4mu}om}
\nc{\oh}{{\on{H}}}
\nc{\codim}{{\on{codim}}}
\nc{\Supp}{{\on{Supp}}}
\nc{\coker}{{\on{Coker}}}
\renewcommand{\ker}{\Ker}
\nc{\id}{\mathrm{id}}
\nc{\seteq}{\mathbin{:=}}
\newcommand{\isoto}[1][]{\xrightarrow[#1]%
{{\raisebox{-.6ex}[0ex][-.6ex]{$\mspace{1mu}\sim\mspace{2mu}$}}}}

\nc{\dA}[1]{\operatorname{D_\cA}(#1)}
\nc{\cl}{\colon}
\nc{\Ker}{\on{Ker}}

\nc{\rhD}{{\operatorname{Mod}_{hr}(\cD_X)}}
\nc{\rhDL}{{\operatorname{Mod}_{hr}(\cD_X)_\La}}
\nc{\rhE}{{\operatorname{Mod}_{hr}(\cE_X)}}
\nc{\rhEL}{{\operatorname{Mod}_{hr}(\cE_X)_\La}}
\nc{\rhFE}{{\operatorname{Mod}_{hr}(\hat\cE_X)}}
\nc{\rhFEL}{{\operatorname{Mod}_{hr}(\hat\cE_X)_\La}}
\nc{\op}{{\on{P}}}
\nc{\oM}{{\on{M}}}
\nc{\ba}{\begin{array}}
\nc{\ea}{\end{array}}
\nc{\hs}{\hspace*}

\newcommand{\set}[2]{\left\{#1 \mathbin{;} #2 \right\}}
\newcommand{\scbul}{{\,\raise.4ex\hbox{$\scriptscriptstyle\bullet$}\,}}
\nc{\dT}{{\mathring{T}}{}^*}
\nc{\oY}{{\mathring{Y}}}
\nc{\oLa}{{\mathring{\La}}}
\nc{\eq}{\begin{eqnarray}}
\nc{\eneq}{\end{eqnarray}}
\nc{\eqn}{\begin{eqnarray*}}
\nc{\eneqn}{\end{eqnarray*}}
\nc{\Per}{\on{\mathcal{P}\kern-.25ex\mathit{er}}}

\nc{\bigmid}{\;\mathbin{\rule[-1.8ex]{.5pt}{3.8ex}}\;}

\nc{\od}{{\operatorname{D}}}
\nc{\odh}{{\operatorname{D}_{\hcA}}}
\nc{\gl}{{\mathfrak{gl}}}

\newcommand{\indlim}[1][]{\mathop{\varinjlim}\limits_{#1}}

\nc{\Z}{\bbZ}
\nc{\C}{\bC}

\nc{\be}{\begin{enumerate}}
\nc{\ee}{\end{enumerate}}
\nc{\bnum}{\be[{\rm(i)}]}
\nc{\enum}{\ee}
\nc{\Mod}{\on{Mod}}
\nc{\coh}{\mathrm{coh}}
\nc{\sing}{\mathrm{sing}}
\nc{\drho}{{\mathring{\rho}}}
\nc{\dU}{{\mathring{U}}}
\nc{\ssum}{\mathop{\mbox{\normalsize$\sum$}}\limits}
\nc{\bbL}{\mathbb{L}at}
\nc{\bl}{\bigl(}
\nc{\br}{\bigr)}

\nc{\ro}{{\rm(}}
\nc{\rf}{{\rm)}}
\nc{\DA}{{\cD\cA}}
\nc{\DK}{{\cD\cK}}
\nc{\hDA}{\widehat{\cD\cA}}
\nc{\hDK}{{\widehat{\cD\cK}}}
\nc{\gd}{{\mathrm{good}}}
\nc{\rhofin}{\rho-\mathrm{fin}}
\nc{\bna}{\be[{\rm(a)}]}
\newcommand{\proolim}[1][]{\mathop{``{\varprojlim}''}\limits_{#1}}
\nc{\epito}{{\twoheadrightarrow}}
\nc{\monoto}{{\rightarrowtail}}
\nc{\prolim}{\varprojlim}
\nc{\bwr}{{\mbox{\large$\wr$}}}
\nc{\cors}{\mathbf{k}}

\begin{document}

\title{Microdifferential systems and the codimension-three conjecture}

\author{Masaki Kashiwara}

\address{Research Institute for Mathematical Sciences,
Kyoto University,
Kyoto, 606--8502, Japan\\
and \\ Department of Mathematical Sciences, Seoul National University,
Seoul, Korea}
\thanks{M.\ K.\ was partially supported
by Grant-in-Aid for Scientific Research (B) 23340005,
Japan Society for the Promotion of Science.}
\email{masaki@kurims.kyoto-u.ac.jp}

\author{Kari Vilonen}

\address{Department of Mathematics, Northwestern University, Evanston, IL
60208, USA \\
and \\ Department of Mathematics, Helsinki University, Helsinki, Finland}
\thanks{K.\ V.\  was supported by NSF and by DARPA via AFOSR grant FA9550-08-1-0315}
\email {vilonen@math.northwestern.edu, vilonen@math.helsinki.fi}

\keywords{microlocal, holonomic modules, perverse sheaves}

\subjclass[2010]{Primary 35A27, Secondary 55N33}

\date{May 30, 2013}

\maketitle

\tableofcontents

\section{Introduction}

In this paper we give a proof of a fundamental conjecture,
{\em the codimension-three conjecture},
for microdifferential holonomic systems with regular singularities.
This conjecture emerged at the end of the 1970's and is well-known among experts. As far as we know, it was never formally written down as a conjecture, perhaps because of lack of concrete evidence for it. Our result can also be interpreted from a topological point of view as a statement about  microlocal perverse sheaves. However, our proof is entirely in the context of microdifferential holonomic systems.

Let $X$ be a complex manifold. We write $\cD_X$ for the sheaf of linear differential operators on $X$ with holomorphic coefficients. The Riemann-Hilbert correspondence identifies the categories of perverse sheaves and regular holonomic $\cD_X$-modules (\cite{K0}). Both of these notions are now widely used in mathematics. The study of microdifferential systems, $\cE_X$-modules, was initiated in \cite{SKK} where the basic properties were proved and some structural results were obtained. In the next section we recall the definition of the sheaf $\cE_X$ along with its basic properties. The notion of micro-support of sheaves was introduced in \cite{KS1, KS2}. Making use of this notion allows one to study perverse sheaves microlocally, i.e., locally on the cotangent bundle. In addition, using this notion, one can define microlocal perverse sheaves and establish the microlocal Riemann-Hilbert correspondence between regular holonomic $\cE_X$-modules and microlocal perverse sheaves. See, for example, \cite{A1,A2, W}.

Let us fix a conic Lagrangian subvariety $\La\subset \ct$.
It is often important and interesting
to understand the category $\op_\La(X)$ of perverse sheaves on $X$ with complex coefficients
whose micro-support  lies in $\La$.
Equivalently, thinking in terms of $\cD_X$-modules
we can view $\op_\La(X)$
as the category of regular holonomic  $\cD_X$-modules
whose characteristic variety is contained in $\La$. The basic structure of this category has been studied by several authors, for example,
\cite{Be}, \cite{K1}, \cite{KK}, \cite{MV} and \cite{SKK}.
In \cite{GMV1} it is shown
how in principle one can describe this category: when $X$ is algebraic
it is equivalent to the category of finitely generated modules
over a finitely presented  associative algebra.
However, it is perhaps more interesting to describe $\op_\La(X)$
in terms of the geometry of $\ct$. This is the the point of view we adopt here.

The category $\op_\La(X)$ gives rise to a stack $\Per_\La$ on $\ct$.
As we explained above, we can view this stack as either
the stack of microlocal perverse sheaves with support on $\La$
or as  the stack of regular holonomic $\cE_X$-modules supported on $\La$.
As $\cD_X$ is a subsheaf of rings of $\cE_X$,
the passage from $\op_\La(X)$ to $\Per_\La$ in this language is rather simple
as it amounts to merely extending the coefficients from $\cD_X$ to $\cE_X$.
One expects the microlocal description of $\op_\La(X)$,
i.e., the description of $\Per_\La(\ct)$ to make things conceptually simpler.
Our resolution of codimension-three-conjecture is a key step in this direction.

Let us write
\begin{equation}
\La  \ = \ \La^0\sqcup\La^1 \sqcup \La^ 2 \sqcup \cdots\,,
\end{equation}
where $\La^i$ is the locus of codimension $i$ singularities of $\La$. The appropriate notion of singularity in our context amounts to a Whitney stratification of $\La$.
We set $\La^{\geq i}=\cup_{k\ge i}\La^k$.
It is not difficult  to show,
either from the topological (\cite[Proposition 10.3.10]{KS2}) or
from the analytic point of view (\cite[Theorem 1.2.2]{KK}), that  the following two statements hold for $U\subset\ct$ an open subset:
\begin{equation}\label{codim1}
\text{The functor $\Per_\La(U)\To\Per_\La(U\setminus\La^{\geq 1})$
is faithful,}
\end{equation}
and
\begin{equation}\label{codim2}
\text{The functor $\Per_\La(U)\to\Per_\La(U\setminus\La^{\geq 2})$
is fully faithful.}
\end{equation}

In particular, the latter implies
\begin{equation}
\parbox{65ex}
{If we have a Lagrangian $\La =\La_1\cup\La_2$
with each $\La_i$ Lagrangian and
$\codim_\La(\La_1\cap\La_2 )\geq 2$, then  $\Per_\La(U) =
\Per_{\La_1}(U) \times  \Per_{\La_2}(U)$.}
\end{equation}
In concrete terms, \eqref{codim2} means that
beyond the codimension one singularities of $\La$
only conditions on objects are imposed.
All the essential data are already given along $\La^0$ and $\La^1$.
Along the locus $\La^0$ we specify a (twisted) local system (cf.\ \cite{K1}),
and along  $\La^1$  we specify some {``glue''}
between the local systems on various components of
$\La^0$. This ``glue'' may also impose conditions on the local system on $\La^0$.
Such a description of the stack $\Per_\La(U\setminus  \La^{\geq 2})$,
in terms of Picard-Lefschetz/Morse theory is given in \cite{GMV2}.

In this paper we answer the question as to what happens
beyond codimension two, i.e., we prove the following fundamental fact.

\begin{thm}
For an open subset $U$ of $\La$ and a closed analytic subset $Y$ of $U\cap\La$
of codimension {\em at least three in $\La$},
the functor $\Per_\La(U)\to\Per_\La(U\setminus Y)$ is
an equivalence of categories.
\end{thm}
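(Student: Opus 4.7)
The functor is already fully faithful: since $Y$ has codimension at least two in $\La$, this follows by factoring the restriction through $\Per_\La(U\setminus\La^{\ge 2})$ and combining \eqref{codim1} with \eqref{codim2}. All the content of the theorem is therefore essential surjectivity, which is a local problem on $U$. Working near a point $p \in Y$ and given $\cM \in \Per_\La(U\setminus Y)$, the natural candidate extension is the sheaf-theoretic pushforward $\cN := j_*\cM$ for $j\colon U\setminus Y \hookrightarrow U$, and the theorem reduces to the coherence of $\cN$ as an $\cE_X$-module. Once coherence is established, $\cN$ is automatically holonomic with support contained in $\La$ (the support cannot grow across the codimension-three subset $Y$), and regularity is preserved because it is tested on $U\setminus Y$ where $\cM$ is already regular.

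To prove coherence I would pass to microdifferential lattices. Locally on $U\setminus Y$ the module $\cM$ admits a coherent $\cE_X(0)$-sublattice $\cM_0$ generating it over $\cE_X$, whose symbol $\operatorname{gr}\cM_0 = \cM_0/\cE_X(-1)\cM_0$ is a coherent $\cO_\La$-module. Extending $\cM_0$ coherently across $Y$ as an $\cE_X(0)$-module would suffice, and by filtering with the $\cE_X(-1)$-adic filtration and passing if necessary to the formal completion $\hat\cE_X$, where a Mittag--Leffler / inverse-limit argument applies to the successive graded pieces $\cE_X(-k)\cM_0/\cE_X(-k-1)\cM_0$, the problem is reduced to a purely commutative statement on $\La$: each graded piece, which is a coherent $\cO_\La$-module of a specific type, must admit a coherent extension across $Y$.

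This commutative extension statement is the heart of the theorem and the main obstacle. For an arbitrary coherent sheaf it is false; one has to exploit the fact that $\operatorname{gr}\cM_0$ does not come from nowhere but is the symbol of a holonomic lattice. Concretely, the plan is to arrange, after a possible change of lattice, that $\operatorname{gr}\cM_0$ has depth at least three along $Y$, using the Poisson/Cohen--Macaulay structure carried by the Lagrangian $\La$ together with the maximality condition satisfied by a holonomic characteristic cycle. The standard local-cohomology criterion then yields coherence of $j_*(\operatorname{gr}\cM_0)$ and, climbing back up the filtration and completing in $\hat\cE_X$, of the extension of $\cM_0$ itself. The hard part will be the depth-three property: since $\La$ can be quite singular along $Y$, classical Hartogs / reflexivity arguments on a normal variety are inadequate, and the required depth estimate has to be proved intrinsically in the microdifferential framework, presumably by combining properties of the formal ring $\hat\cE_X$ with the involutivity of the support. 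Uniqueness of the produced extension, and its independence of the choice of lattice, then follow from the fully faithful property already in hand.
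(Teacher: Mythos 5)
Your reduction of the theorem to the coherence of $j_*\cM$, and the observation that full faithfulness is already known in codimension two, match the paper. But the core of your argument has a genuine gap, and it sits exactly where you place it yourself: the ``depth at least three along $Y$'' property for $\operatorname{gr}\cM_0$ as a coherent $\cO_\La$-module. The Lagrangian $\La$ is in general highly singular along $Y$, and no mechanism is offered for proving such a depth estimate there; ``combining properties of $\hat\cE_X$ with involutivity'' is not an argument. The paper avoids this problem entirely by never working with coherent sheaves on $\La$: after a quantized contact transformation putting $\La$ in generic position, it pushes forward under a projection $\rho$ that is \emph{finite} on $\La$, landing on a \emph{smooth} base $X'$ of dimension $n-1$. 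There the Cohen--Macaulay property of holonomic modules, transported through the duality for finite morphisms (Proposition~\ref{Serre duality variant}) and the Quillen conjecture (Popescu, Bhatwadekar--Rao), makes $\rho'_*\cM$ locally free over $\cK_{X'}$ and the pushed-forward lattice a \emph{reflexive} coherent $\cA_{X'}$-module on a smooth manifold --- precisely the setting where the classical Trautmann/Frisch-Guenot/Siu codimension-three theorem applies. Your plan also treats the filtration step too lightly: the quotients $\hN_k=\hN/t^k\hN$ are not reflexive as $\cO$-modules, so one must replace them by double duals $(\hN_k)^{**}$ and then prove the stabilization of both kernels and cokernels of the transition maps in the pro-system (statements \eqref{stability of ker} and \eqref{stability of coker}); this duality and Cartesian-square analysis is the bulk of Section~\ref{The formal case} and is not a routine Mittag--Leffler argument.

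The second gap is the return from the formal to the convergent case. You write ``passing if necessary to the formal completion $\hat\cE_X$,'' but then give no way back: a coherent extension of $\hcA\otimes\cN$ does not automatically yield one of $\cN$, since the growth condition defining $\cE_X$ is essential. The paper devotes Theorem~\ref{8} and all of Section~\ref{comparison} to this comparison, using Bungart's Oka principle for bundles with Banach structure group $GL_d(A_C)$ together with explicit Cousin-type estimates on transition matrices to show that formal triviality of a locally free $\cA$-module in codimension two forces convergent extendability. Without this step (or a direct proof of Conjecture~\ref{7}), your argument can at best establish the formal version, Theorem~\ref{3}, not the convergent Theorem~\ref{1} that the main statement requires.
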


Note that microlocal perverse sheaves can be defined with coefficients
in any field. As our methods are analytic, our theory applies only to the case when the field is of characteristic zero.
We do not know if our results are true beyond this case.

As was already noted, in our proof we work entirely
within the context of $\cE_X$-modules.
Our arguments apply to any holonomic module which possesses a (global)
 $\cE_X(0)$-lattice; here $\cE_X(0)$ stands for micro differential operators of order  at most zero. All regular holonomic $\cE_X$-modules do possess
such a lattice but we do not know if this is
true in the irregular case in general.

We will now formulate our results in more detail and explain the strategy of proof.
Our main extension theorem takes the following form:
\begin{thm}
\label{1}
Let $U$ be an open subset of $T^*X$, $\Lambda$ a closed Lagrangian
analytic subset of $U$, and $Y$ a closed analytic subset of $\Lambda$
of codimension {\em at least three}.
Let $\cM$ be a holonomic $\bl\cE_X\vert_{U\setminus Y}\br$-module
whose support is contained in $\Lambda\setminus Y$.
Assume that $\cM$ possesses an   $\bl\cE_X(0)\vert_{U\setminus Y}\br$-lattice.
Then $\cM$ extends uniquely to a holonomic module
defined on $U$ whose support is contained on $\La$.
\end{thm}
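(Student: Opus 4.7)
\emph{Uniqueness and local reduction.}
Uniqueness is automatic from \eqref{codim2}: any two holonomic extensions of $\cM$ to $U$ with support in $\La$ agree on $U\setminus Y$ via the prescribed data, and since $\codim_\La Y\ge 3>2$ the full faithfulness of $\Per_\La(U)\to\Per_\La(U\setminus Y)$ lifts this identification to a unique isomorphism on $U$. So I only need to construct the extension, and the uniqueness lets me work locally around any point $p\in Y$ and then reglue.

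\emph{Strategy: extend the $\cE_X(0)$-lattice.}
The natural bridge to commutative algebra is the hypothesized $\cE_X(0)$-lattice $\cL\subset\cM$: the associated graded pieces $\cE_X(-k)\cL/\cE_X(-k-1)\cL$ are coherent $\cO_\La$-modules on $\La\setminus Y$, so the task reduces to extending $\cL$ to a coherent $\cE_X(0)$-module $\widetilde\cL$ on all of $U$. Given $\widetilde\cL$, the extension $\widetilde\cM\seteq\cE_X\otimes_{\cE_X(0)}\widetilde\cL$ is automatically holonomic with support in the Lagrangian $\La$, so its existence reduces the theorem to a coherent-sheaf extension problem on $\La$.

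\emph{The central obstacle: coherent extension across codimension $3$ on singular $\La$.}
The delicate step is the coherent extension across a codimension-$3$ subset of the possibly singular Lagrangian $\La$. On smooth ambient spaces this would be a Trautmann/Frisch--Guenot-type theorem for reflexive coherent analytic sheaves, and the natural approach is to first replace $\cL$ by an $S_2$-canonical modification determined by its generic-point behaviour and its codimension-one-in-$\La$ data, then extend. To make such techniques applicable, I would choose a contact transformation near $p$ putting $\La$ in favorable form, for instance realizing it as a finite cover of a smooth Lagrangian via a generic microlocal projection, so that coherent extension on $\La$ can be pulled back from a smooth Lagrangian problem with controlled ramification. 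Once $\cL$ is extended at the $\cO_\La$-level, one lifts the extension order-by-order using completeness of $\cE_X(0)$ in the order filtration; the obstructions at each order are $\Ext^1$-classes of coherent $\cO_\La$-modules on $\La\setminus Y$, which extend (and hence become trivial on $U$) by the very same codimension-$3$ principle applied to the symbol modules. The hardest step — and the one genuinely forced by the singularity of $\La$ — is thus formulating and proving a Hartogs/Trautmann-type coherent extension statement intrinsic to the singular Lagrangian $\La$; once that is in hand, the $\cE$-theoretic reassembly into $\widetilde\cM$ is essentially formal.
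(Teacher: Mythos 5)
Your outline diverges from the paper's route, and it contains two genuine gaps, one of which is fatal as written. The fatal one is the claim that one can lift the extension ``order-by-order using completeness of $\cE_X(0)$ in the order filtration.'' The ring $\cE_X(0)$ is \emph{not} complete for that filtration; its completion is $\hE_X(0)$. An order-by-order construction therefore only produces an $\hE_X(0)$-module, i.e.\ a formal extension, and the entire difficulty of the convergent Theorem~\ref{1} is precisely the passage from the formal extension back to a convergent one. The paper isolates this as a separate comparison statement (Theorem~\ref{8}), whose proof occupies all of section~\ref{comparison} and requires Bungart's Oka principle for $GL_d(A_C)$-bundles together with a delicate Cousin-type splitting $c=ab$ with explicit growth estimates of the type $\|a_n\|\le \tilde D^n n!$. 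Your proposal has no mechanism at all for recovering the growth condition \eqref{definition of E}, so it proves at best the formal Theorem~\ref{3}, not Theorem~\ref{1}.

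The second gap is that the step you defer --- a Trautmann/Frisch--Guenot/Siu-type extension theorem ``intrinsic to the singular Lagrangian $\La$'' --- is essentially the whole theorem, and the paper deliberately avoids ever posing the extension problem on $\La$ itself. Instead it uses the finite projection $\rho$ of section~\ref{The reduction} to push everything down to a smooth open set $X'\subset\bC^{n-1}$, where the module becomes a reflexive coherent module over the commutative ring $\hcA_{X'}$ (one needs the Popescu--Bhatwadekar--Rao theorem here to see that $\rho'_*\cM$ is locally free over $\cK_{X'}$), and only then invokes the classical Theorem~\ref{6} on the smooth base. Even there, the formal commutative extension (Theorem~\ref{5}) is not a matter of ``obstruction classes extending'': the graded pieces $\hN_k$ are not reflexive, one must pass to $(\hN_k)^{**}$, and coherence of $\varprojlim j_*(\hN_k)^{**}$ requires the stabilization statements \eqref{stability of ker}--\eqref{stability of coker}, proved via the Cartesian-square and dualization analysis of section~\ref{The formal case}. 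A minor further point: your uniqueness argument via full faithfulness of $\Per_\La(U)\to\Per_\La(U\setminus Y)$ implicitly assumes regularity; the paper's uniqueness argument uses only Cohen--Macaulayness of holonomic modules and the vanishing \eqref{CM condition}, which is what the theorem's hypotheses actually provide.
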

There is also the following version for submodules:
\begin{thm}
\label{2} Let $U$ be an open subset of $T^*X$, $\Lambda$ a closed Lagrangian
analytic subset of $U$, and $Y$ a closed analytic subset of $\Lambda$
of codimension {\em at least two}.
Let $\cM$ be a holonomic $\bl\cE_X\vert_U\br$-module whose support is contained in $\Lambda$ and let  $\cM_1$ be an
$\bl\cE_X\vert_{U\setminus Y}\br$-submodule of $\cM\vert_{U\setminus Y}$.
 Then  $\cM_1$  extends uniquely to a holonomic $\bl\cE_X\vert_U\br$-submodule of $\cM$.
\end{thm}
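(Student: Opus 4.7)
The plan is to dispose of uniqueness by a short holonomicity argument, to write down the tautological candidate extension, and to establish its coherence by reducing, via an $\cE_X(0)$-lattice of $\cM$, to an extension problem for coherent $\cO_\La$-submodules across a codimension-two closed analytic subset.

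For uniqueness, suppose $\tilde\cM_1,\tilde\cM_1'\subset\cM$ are two coherent extensions of $\cM_1$. Their sum is a coherent $\cE_X|_U$-submodule of $\cM$, and the quotient $(\tilde\cM_1+\tilde\cM_1')/\tilde\cM_1$ is a holonomic subquotient of $\cM$ supported in $Y$. A holonomic module has Lagrangian, hence $\dim X$-dimensional, support, whereas $\dim Y\le\dim\La-2=\dim X-2$; so the subquotient vanishes and $\tilde\cM_1'\subset\tilde\cM_1$, with equality by symmetry.

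For existence, the same dimension count forces $H^0_Y(\cM)=0$, so $\cM$ embeds into $j_*j^*\cM$ for $j\cl U\setminus Y\hookrightarrow U$, and I take as candidate
\[
\tilde\cM_1\seteq\cM\cap j_*\cM_1\ \subset\ j_*j^*\cM.
\]
This is an $\cE_X$-submodule of $\cM$ with $\tilde\cM_1|_{U\setminus Y}=\cM_1$; the whole problem reduces to showing $\tilde\cM_1$ is $\cE_X|_U$-coherent. Working locally, I fix an $\cE_X(0)$-lattice $L\subset\cM$ and consider the coherent $\cE_X(0)|_{U\setminus Y}$-submodule $L_1\seteq L|_{U\setminus Y}\cap\cM_1$ of $L|_{U\setminus Y}$. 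It then suffices to extend $L_1$ across $Y$ to a coherent $\cE_X(0)|_U$-submodule $\tilde L_1\subset L$; for then $\cE_X\cdot\tilde L_1$ is a coherent $\cE_X|_U$-submodule of $\cM$ extending $\cM_1$, which must coincide with $\tilde\cM_1$ by the uniqueness above.

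The extension of $L_1$ itself I would carry out along the order filtration $\{\cE_X(-n)L\}_{n\ge 0}$: a standard Artin--Rees / completion argument reduces the task to extending the coherent $\cO_\La$-subsheaf $\overline{L_1}\subset\overline L|_{U\setminus Y}$ to a coherent subsheaf of $\overline L\seteq L/\cE_X(-1)L$, a coherent $\cO_{T^*X}$-module supported on $\La$. This last reduction is the main obstacle. Since $\La$ need not be normal, I expect to pass to the normalization $\nu\cl\tilde\La\to\La$, invoke the Hartogs-type extension theorem for coherent reflexive sheaves across a codimension-two subset of a normal analytic space, and then descend the extension back to $\La$ by using that $\overline L$---like every graded piece of the order filtration---has no associated components supported in $Y$, a depth-two property forced by the holonomicity of $\cM$ (all associated components of these symbols have dimension $\dim X$, whereas $\dim Y\le\dim X-2$).
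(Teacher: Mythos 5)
Your uniqueness argument and your choice of candidate $\tilde\cM_1=\cM\cap j_*\cM_1$ match the paper's setup, and fixing an $\cE_X(0)$-lattice is also how the paper begins. The gap is in the sentence ``a standard Artin--Rees / completion argument reduces the task to extending the coherent $\cO_\La$-subsheaf $\overline{L_1}\subset\overline L|_{U\setminus Y}$.'' The ring $\cE_X(0)$ is \emph{not} complete for the order filtration (its completion is $\hE_X(0)$), so a completion argument can at best produce a coherent extension of the $\hE_X(0)$-module $\hE_X(0)\otimes_{\cE_X(0)}L_1$; it gives no control of the growth condition \eqref{definition of E} that defines $\cE_X$, and coherence over $\hE_X(0)$ does not imply coherence over $\cE_X(0)$. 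Bridging exactly this gap is the content of the paper's Theorem~\ref{8}, whose proof occupies section~\ref{comparison}: one must show that a locally free $\cA$-lattice which extends formally also extends convergently, and this requires Bungart's Oka principle for Banach-algebra bundles together with a Cartan-type matrix factorization with estimates of the form $\|a_n\|\le \tilde D^n n!$ propagated by a Schwarz-lemma argument. Nothing in your outline supplies this analytic input, and without it the proof does not close. (The paper's actual route is: prove the formal Theorem~\ref{4} first, via the finite-morphism reduction of section~\ref{The reduction} and the commutative Theorem~\ref{9}; then use Lemma~\ref{lattice comparison} and Theorem~\ref{8} to descend to the convergent case.)

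A secondary issue: for the graded piece you invoke ``the Hartogs-type extension theorem for coherent reflexive sheaves across a codimension-two subset of a normal analytic space.'' Reflexive extension across codimension two fails in general --- that failure is precisely why the main theorem of this paper requires codimension three. What is true, and what the paper uses (Theorem~\ref{Siu Trautmann}, from Siu--Trautmann), is that the \emph{image} of a globally defined coherent sheaf in a torsion-free sheaf on the complement extends across a codimension-two set; since your $\overline{L_1}$ sits inside the restriction of the globally defined $\overline L$, that is the statement you need, and passing to the normalization of $\La$ is neither necessary nor sufficient for it. Even granting the graded-piece extension, reassembling the extensions of all graded pieces into a coherent module requires the stabilization criterion of \cite[Proposition 1.2.18]{KS3} (used throughout sections~\ref{The formal case} and~\ref{The formal submodule theorem}), which again only lands you in the formal world.
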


We deduce these results, which we call convergent versions, from their formal versions. In the formal versions we work over the the ring of formal microdifferential operators $\hE_X$ instead. Here are the statements in the formal case:

\begin{thm}
\label{3}
 Let $U$ be an open subset of $T^*X$, $\Lambda$ a closed Lagrangian
analytic subset of $U$, and $Y$ a closed analytic subset of $\Lambda$
of codimension {\em at least three}.
Let $\hM$ be a holonomic  $\bl\hE_X\vert_{U\setminus Y}\br$-module
whose support is contained in $\Lambda\setminus Y$.
Assume that $\cM$ possesses an $\bl\hE_X(0)\vert_{U\setminus Y}\br$-lattice.
Then $\hM$ extends uniquely to a holonomic module
defined on $U$ whose support is contained on $\La$.
\end{thm}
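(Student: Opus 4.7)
Uniqueness is immediate from the formal analog of \eqref{codim2}, whose proof in \cite{KK} transfers from $\cD_X$ to $\hE_X$ and which applies as soon as $\codim_\La Y \geq 2$. Existence is the real content. My plan is to carry out the extension at the level of lattices: if I can extend $\hL$ to a coherent $\hE_X(0)$-module $\tilde\hL$ on $U$ supported in $\La$, then $\tilde\hM := \hE_X \otimes_{\hE_X(0)} \tilde\hL$ is a holonomic extension of $\hM$. Holonomicity and support are local conditions, trivial on $U\setminus Y$ and forced along $Y$ by coherence of $\tilde\hL$ together with the hypothesis that $\Supp \hL \subset \La\setminus Y$.

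To extend the lattice I would work inductively with the order filtration. Since $\hE_X(0)$ is complete for the $\hE_X(-1)$-adic filtration, $\hL\cong\varprojlim_n \hL/\hE_X(-n)\hL$, and each graded piece $\hE_X(-n)\hL/\hE_X(-n-1)\hL$ is a coherent $\cO_\La$-module via the principal symbol identification $\hE_X(-n)/\hE_X(-n-1)\simeq\cO_{T^*X}$. The problem thus decouples into two subproblems: (a) extending each $\cO_\La$-coherent graded piece across $Y$, and (b) lifting these extensions to a compatible system $\{\tilde\hL_n\}$ of $\hE_X(0)/\hE_X(-n)$-modules whose inverse limit is $\hE_X(0)$-coherent. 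Step (a) is a Hartogs-type statement that follows from $\codim_\La Y \geq 2$ and an $S_2$-argument once $\hL$ has been replaced by a sufficiently saturated lattice. Step (b) is where the full strength of $\codim_\La Y \geq 3$ enters: the obstruction to each lifting lives in an $\Ext^1$-group whose associated graded is controlled by $\cH^2_Y$ of coherent sheaves on $\La$, and the codimension-three hypothesis is exactly what makes this local cohomology vanish.

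The principal obstacle is that $\La$ need not be normal or Cohen--Macaulay, so depth-based vanishing on $\La$ cannot be invoked directly. I would get around this by transferring the extension problem to the smooth enveloping $T^*X$ --- where $Y$ has codimension at least $\dim X+3$ and Hartogs-type arguments are clean --- and then use the Lagrangian nature of $\La$ to bring the extensions back. A second and, in my view, technically central difficulty is establishing coherence of $\varprojlim\tilde\hL_n$ over $\hE_X(0)$: a level-by-level construction only produces a pro-coherent system, and promoting this to honest $\hE_X(0)$-coherence across $Y$ will require an Artin--Rees-type uniformity on generators, extracted from the codimension hypothesis together with the Lagrangian geometry of $\La$ inside the ambient symplectic manifold. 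Once coherence is in hand, the convergent version (Theorem~\ref{1}) should follow by a faithful flatness argument comparing $\hE_X$ with $\cE_X$ on the extended lattice.
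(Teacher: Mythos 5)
Your overall architecture (uniqueness via the codimension-two full faithfulness, existence via extending a lattice, reduction to a graded/filtered problem plus a pro-coherence argument) is the right shape, but the specific route you propose has a genuine gap at its core, namely steps (a) and (b). The graded pieces of the order filtration on $\hL$ are \emph{torsion} coherent $\cO_{T^*X}$-modules supported on $\La$, and the extension problem for such a sheaf across $Y$ is governed by depth relative to $\La$, not relative to the ambient $T^*X$. Since $\La$ is in general neither normal nor $S_2$ (and a module supported on a proper subvariety is never reflexive over $\cO_{T^*X}$), neither a Hartogs/$S_2$ argument on $\La$ nor your proposed transfer to the smooth enveloping $T^*X$ applies: no amount of saturating the lattice changes the geometry of $\La$. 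The same depth problem kills step (b): the vanishing of $\cH^2_Y(\cF)$ for $\codim_\La Y\ge 3$ requires depth hypotheses on $\cF$ along $Y$ that a general coherent sheaf on a singular Lagrangian does not satisfy. Finally, the pro-coherence of $\varprojlim\tilde\hL_n$, which you correctly flag as central, is not a routine Artin--Rees statement; in the paper it occupies all of Section~\ref{The formal case} and rests on a stabilization argument exploiting compactness transverse to $Y$.

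The paper circumvents exactly these obstructions by first \emph{eliminating the variables}: after a quantized contact transformation putting $\La$ in generic position, a projection $\rho$ finite on $\La$ (Section~\ref{The reduction}) replaces $(\hE_X,\hE_X(0))$ by the commutative pair $(\hcK_{X'},\hcA_{X'})$ with $\hcA_{X'}\cong\cO_{X'}[[t]]$ on a \emph{smooth} base $X'$ in which $Y'$ still has codimension $\ge 3$. The Popescu--Bhatwadekar--Rao theorem (Quillen's conjecture) makes $\rho_*\hM$ locally free over $\hcK_{X'}$ (Proposition~\ref{vector bundle}), and the pushed-forward lattice, after bidualization over $\hcA_{X'}$, becomes a reflexive module whose reductions mod $t^k$ can themselves be bidualized into reflexive $\cO_{X'}$-modules with full-dimensional support; only then does the classical Trautmann--Frisch--Guenot--Siu theorem (Theorem~\ref{6}) apply, and the inverse limit is shown coherent via the pro-object criterion of Kashiwara--Schapira and a Cartesian-square stabilization argument. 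In short, the filtration-by-order idea only becomes workable after the finite projection has converted torsion sheaves on a singular Lagrangian into full-support sheaves on a smooth space; without that reduction (and without the Quillen conjecture input), your steps (a) and (b) do not go through.
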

And:
\begin{thm}
\label{4}
Let $U$ be an open subset of $T^*X$, $\Lambda$ a closed Lagrangian
analytic subset of $U$, and $Y$ a closed analytic subset of $\Lambda$
of codimension {\em at least two}.
Let $\hM$ be a holonomic $\bl\hE_X\vert_U\br$-module whose support is contained in $\Lambda$ and let $\hM_1$ be an
 $\bl\hE_X\vert_{U\setminus Y}\br$-submodule of $\cM\vert_{U\setminus Y}$.
Then $\hM_1$ extends uniquely to a holonomic $\bl\hE_X\vert_U\br$-submodule of $\hM$.
\end{thm}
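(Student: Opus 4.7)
I will establish uniqueness and existence separately. For uniqueness, suppose $\hM_1'$ and $\hM_1''$ are two holonomic $\hE_X|_U$-submodules of $\hM$, each restricting to $\hM_1$ on $U\setminus Y$. Their intersection $\hN\seteq\hM_1'\cap\hM_1''$ inside $\hM$ is again holonomic and restricts to $\hM_1$, so $\hM_1'/\hN$ is a holonomic $\hE_X$-module whose support is contained in $\La\cap Y$. But any nonzero holonomic module has characteristic variety of pure dimension $\dim X=\dim\La$, while $\dim(\La\cap Y)\le\dim\La-2$. Hence $\hM_1'/\hN=0$, and symmetrically $\hM_1''/\hN=0$, giving $\hM_1'=\hM_1''$.

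For existence, set $j\cl U\setminus Y\imbed U$. The same dimension argument yields $\Gamma_Y(\hM)=0$, so the adjunction map $\hM\to j_*j^*\hM$ is injective. I then define
\[
\widetilde{\hM_1} \seteq \hM \cap j_*\hM_1 \qquad(\text{inside } j_*j^*\hM).
\]
By construction $j^*\widetilde{\hM_1}=\hM_1$ and $\widetilde{\hM_1}\subseteq\hM$; once shown to be $\hE_X$-coherent it is automatically holonomic as a submodule of the holonomic $\hM$, and by the previous paragraph it is the unique such extension.

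Coherence of $\widetilde{\hM_1}$ is checked locally near a point of $Y$ and is the technical heart of the proof. Choose a coherent $\hE_X(0)$-lattice $\hL\subseteq\hM$, available by the paper's standing assumption; set $\hL_1\seteq \hM_1\cap\hL|_{U\setminus Y}$, a coherent $\hE_X(0)|_{U\setminus Y}$-lattice of $\hM_1$; and put $\widetilde{\hL_1}\seteq \hL\cap j_*\hL_1\subseteq\hL$. It is enough to show $\widetilde{\hL_1}$ is $\hE_X(0)$-coherent, for then $\widetilde{\hM_1}=\hE_X\cdot\widetilde{\hL_1}$. For each $k\ge 0$ the quotient $\hL/\hE_X(-k)\hL$ is coherent over the $\cO_{T^*X}$-algebra $\hE_X(0)/\hE_X(-k)$ and hence is a coherent $\cO_{T^*X}$-module supported on $\La$. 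The image of $\hL_1$ in the restriction of this quotient to $U\setminus Y$ is a coherent $\cO$-subsheaf. Because $Y$ has codimension $\ge 2$ in the support $\La$ of the ambient coherent sheaf, a Hartogs-type theorem for coherent \emph{subsheaves} of a coherent $\cO$-module yields a canonical coherent extension across $Y$. Passing to the projective limit in $k$ --- legitimate in the formal setting, where $\hL$ is $\hE_X(-1)$-adically complete --- assembles these extensions into a coherent $\hE_X(0)$-submodule of $\hL$, which must coincide with $\widetilde{\hL_1}$.

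The main obstacle is precisely this coherent-subsheaf extension step. Classical Thullen/Siu theorems for extending coherent sheaves require depth or Cohen--Macaulay conditions on the ambient variety, and here $\La$ may be arbitrarily singular. What makes the present situation tractable is that we are extending a \emph{subsheaf} of a known coherent sheaf: the candidate $j_*\cF\cap\cG$ already sits inside the coherent $\cG$, so its coherence reduces to a vanishing statement for $\cH^1_Y(\cG)$ in codimension two. Establishing that vanishing uniformly in $k$, so that the projective limit produces a genuine $\hE_X(0)$-coherent sheaf rather than a pathological inverse system, is where the substantive work lies; the Lagrangian nature of $\La$ and the codimension-$\ge 2$ hypothesis on $Y$ are both essential inputs.
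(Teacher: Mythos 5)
Your overall skeleton (uniqueness by purity of holonomic supports, existence by showing $\hM\cap j_*\hM_1$ is coherent, reduction to a lattice statement, then a layer-by-layer extension followed by a projective limit) is the right shape, and you correctly identify where the difficulty sits. But the two steps you flag as ``the substantive work'' are precisely the content of the theorem, and the route you sketch for the first one does not go through. The sheaves $\hL/\hE_X(-k)\hL$ and the subsheaves you want to extend are $\cO_{T^*X}$-modules supported on $\La$; as such they are pure torsion, so the Siu--Trautmann gap-sheaf theorem (which requires the quotient to be torsion free on a \emph{smooth} ambient space) cannot be invoked, and since $\La$ may be arbitrarily singular there is no off-the-shelf subsheaf-extension theorem relative to $\La$. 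Your proposed reduction to ``vanishing of $\cH^1_Y(\cG)$'' is also not correct: vanishing of $\cH^0_Y$ and $\cH^1_Y$ of the ambient sheaf only says that the ambient sheaf itself extends uniquely; it gives no control on the coherence of $j_*\cF\cap\cG$ for a subsheaf $\cF$. The paper's essential device, entirely absent from your argument, is the finite projection $\rho$ onto a smooth $(n-1)$-dimensional base $X'$ (Section 4): after pushing forward, the problem becomes one about $\hcA_{X'}$-modules on a smooth manifold in which $Y'$ has codimension $\ge 2$, the quotient lattice is genuinely torsion free because it embeds in a locally free $\hcK_{X'\setminus Y'}$-module (Proposition \ref{vector bundle}, which itself rests on duality for finite morphisms and the Popescu--Bhatwadekar--Rao theorem), and only then does the Siu--Trautmann Theorem \ref{Siu Trautmann} apply.

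The second gap is also real: coherence of $\varprojlim_k$ of the extended layers is not automatic from adic completeness. The paper has to verify the criterion of \cite[Proposition 1.2.18]{KS3}, i.e.\ that $\ker$ and $\coker$ of the transition maps $i_k$ stabilize locally, and the stabilization of the cokernels near a point of $Y$ requires the separate compactness/finite-morphism argument at the end of Section \ref{The formal submodule theorem} (stabilization away from $Y$ on a shell, hence properness of the projection of $\Supp(\cF_k/\cF_{k-1})$, hence codimension $\ge 2$, hence stabilization of the biduals and then of the $\cF_k$ themselves). Without the reduction to the commutative setting over a smooth base, neither of these two steps can be carried out, so the proposal as written does not constitute a proof.
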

Let $j\cl U\setminus Y\to U$ be the open inclusion.
The theorems above amount to proving that the natural sheaf extensions
$j_*\cM$, $j_*\cM'$, $j_*\hM$ and $j_*\hM_1$  of $\cM$, $\cM_1$, $\hM$, and $\hM_1$, respectively, are coherent.
By a standard technique in several complex variables,
which in this context was already used in \cite{SKK},
it suffices to prove the coherence of a module after pushing it forward
under a map which is finite on the support of the module.
 Via this technique we are able to ``eliminate'' the extraneous variables and reduce extension problems, i.e., the question of coherence
of the sheaf extension, to a simpler form. The sheaf $\cE_X(0)$ can then be replaced by the sheaf $\cA_X$ whose precise definition is given in section 3. The $\cA_X$ is a commutative sheaf of rings and it can be viewed as a certain kind of neighborhood of $X\times\{0\}$ in $X\times \bC$. The sheaf $\cA_X$ has its formal version $\hcA_X$ and we can similarly replace $\hE_X(0)$ with $\hcA_X$ . The sheaf $\hcA_X$ is the structure sheaf of the formal neighborhood of $X\times\{0\}$ in $X\times \bC$.
We will call extension theorems
involving the sheaves $\cA_X$ and $\hcA_X$
commutative versions of the extension theorems.

As the proof of the submodule theorem is simpler and goes along the lines of the proof of the codimension three extension theorem we will focus on the codimension three extension theorem in the introduction and just briefly comment on the submodule case.

We will next state a theorem which by the discussion above implies the formal version of the codimension three extension theorem. Let $Y\subset X$ be a
subvariety and let us write $j\cl X\setminus Y\hookrightarrow X$
for the open embedding. Then:
\begin{thm}
\label{5}
If $\hN$ is a reflexive coherent $\str_{X\setminus Y}$-module and $\dim Y \leq \dim X -3$ then $j_*\hN$ is a coherent $\str_X$-module.
\end{thm}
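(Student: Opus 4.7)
The assertion is local on $X$, so fix $y_0\in Y$ and work on a sufficiently small Stein neighborhood $V$ of $y_0$. The strategy is classical Scheja--Siu: construct a coherent extension $\widetilde\cF$ of $\hN|_{V\setminus Y}$ on $V$ by dualising a presentation of $\chom_\str(\hN,\str)$, then identify $\widetilde\cF$ with $j_*\hN|_V$ via a local-cohomology argument along $Y$. Reflexivity of $\hN$ is what makes the kernel construction produce an extension, and the codimension-three hypothesis is what makes the depth estimate succeed.

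The decisive commutative-algebra input is a local-cohomology vanishing for $\str$ along $Y$. Each stalk $\str_y$ is (canonically) the formal power series ring $\cO_{X,y}[[t]]$, a regular---and so Cohen--Macaulay---local ring of dimension $\dim X+1$; local equations of $Y$ in $X$ remain a regular sequence in $\str_y$, so the ideal of $Y$ there has height $\codim_X Y\geq 3$. Consequently $\on{depth}_{I_Y}\str_y\geq 3$, which yields
\[
\cH^i_Y(\str_X)=0\qquad\text{for }i=0,1,2.
\]
In particular, every section of $\str$ over $V\setminus Y$ extends uniquely across $Y$.

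Now the construction. Since $\chom_\str(\hN,\str)$ is coherent on $V\setminus Y$, shrinking $V$ if necessary, pick a finite presentation $\str^b\xrightarrow{\varphi}\str^a\longrightarrow\chom_\str(\hN,\str)\longrightarrow 0$ on $V\setminus Y$. The morphism $\varphi$ is a matrix of sections of $\str$ on $V\setminus Y$, which by the Hartogs-type statement above extends uniquely to $\bar\varphi\colon\str^b\to\str^a$ on $V$. Define
\[
\widetilde\cF\seteq\Ker\bl\chom_\str(\bar\varphi,\str)\colon\str^a\to\str^b\br,
\]
a coherent $\str$-submodule of $\str^a$ on $V$; reflexivity of $\hN$ gives $\widetilde\cF|_{V\setminus Y}\simeq\hN$. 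Let $\cI$ be the image of $\chom_\str(\bar\varphi,\str)$ in $\str^b$. The long exact sequences of $\cH^\bullet_Y(-)$ for $0\to\widetilde\cF\to\str^a\to\cI\to 0$ and $0\to\cI\to\str^b$, together with the vanishings above, yield $\cH^0_Y(\widetilde\cF)\hookrightarrow\cH^0_Y(\str^a)=0$ and $\cH^1_Y(\widetilde\cF)\simeq\cH^0_Y(\cI)\hookrightarrow\cH^0_Y(\str^b)=0$. Hence $\widetilde\cF\isoto j_*j^*\widetilde\cF=j_*\hN$ on $V$, and $j_*\hN$ is coherent on $X$.

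The crucial place where $\codim_X Y\geq 3$ is used is the vanishing $\cH^2_Y(\str)=0$: at codimension two this fails, and then $\cH^1_Y(\widetilde\cF)$ need not vanish, so $\widetilde\cF$ need not agree with $j_*\hN$ and the whole argument collapses. A secondary technical point is producing the global finite presentation of $\chom_\str(\hN,\str)$ on $V\setminus Y$ for small enough Stein $V$---this rests on standard Cartan--Frisch--Guenot finiteness on Stein opens combined with the codimension-two Hartogs extension just noted, and could alternatively be handled by covering $V$ with small Steins and gluing via uniqueness of $j_*\hN$.
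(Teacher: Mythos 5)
Your argument has a genuine gap at the step where you ``pick a finite presentation $\str^b\xrightarrow{\varphi}\str^a\to\chom_{\str}(\hN,\str)\to 0$ on $V\setminus Y$.'' The set $V\setminus Y$ is not Stein (complements of analytic subsets of codimension $\ge 2$ in Stein opens never are), so Cartan's Theorem A does not apply there, and a coherent sheaf defined only on $V\setminus Y$ need not be globally finitely generated, let alone finitely presented. Worse, the existence of such a presentation is essentially equivalent to the theorem you are proving: once $\bar\varphi$ exists, $\coker(\bar\varphi)$ is already a coherent extension of $\odh{\hN}$ and the rest is routine. Neither of your proposed remedies works: Cartan--Frisch--Guenot finiteness lives on Stein opens, and every Stein neighborhood $W$ of a point of $Y$ reproduces the identical problem on $W\setminus Y$; covering by small Steins inside $V\setminus Y$ produces only presentations away from $Y$, which carry no information about extension across $Y$.

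A symptom that something is missing: as written, your local-cohomology computation uses only $\cH^0_Y(\str)=\cH^1_Y(\str)=0$, i.e., $\codim_X Y\ge 2$; the group $\cH^2_Y(\str)$ never actually enters the displayed argument, contrary to your closing claim. If the presentation step were free, your proof would therefore establish coherent extension of reflexive modules across codimension two, which is false already for $\cO$-modules (a nontrivial holomorphic line bundle on $\bC^2\setminus\{0\}$ does not extend coherently, and tensoring with $\str$ over $\cO$ produces a reflexive $\str$-module counterexample). So the codimension-three hypothesis must do its work precisely in the step you have not justified. This is why the paper proceeds entirely differently: it reduces modulo $t^k$, applies the classical Trautmann--Frisch-Guenot--Siu theorem to the reflexive hulls $(\hN_k)^{**}$ of the coherent $\cO_{X\setminus Y}$-modules $\hN/t^k\hN$, and then proves --- and this is where all the work lies --- that the resulting projective system of coherent $\cO_X$-modules $\tN_k=j_*\bl(\hN_k)^{**}\br$ has coherent inverse limit, via stabilization of kernels and cokernels. (A further small inaccuracy: the stalk of $\str_X$ at $y$ is not $\cO_{X,y}[[t]]$ but the subring of series whose coefficients admit a common domain of definition; it is still a regular Noetherian local ring of dimension $\dim X+1$, so your depth count survives, but the identification as stated is wrong.)
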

The notion of {\em reflexive} is defined in the usual way. We call $\hN$ {\em reflexive} if $\hN\isoto\odh\odh{\hN}$, where
we have written $\odh{\hN}$ for the dual of $\hN$, i.e.,
\begin{equation*}
\odh{\hN} \ =  \ \chom_{\str_X}(\hN,\str_X)\,.
\end{equation*}

In section~\ref{The formal case} of this paper we deduce this result from the classical extension theorem due to Trautmann, Frisch-Guenot, and Siu \cite{T,FG,  Siu1} which we state here in the form suitable for us:
\begin{thm}[Trautmann, Frisch-Guenot, and Siu]
\label{6}
If $\cF$ is a reflexive coherent $\cO_{X\setminus Y}$-module and and $\dim Y \leq \dim X -3$
then $j_*\cF$ is a coherent $\cO_X$-module.
\end{thm}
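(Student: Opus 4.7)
The problem is local on $X$, so fix a point $x\in Y$ and work in a small Stein neighborhood $U$. Since $\cF$ is reflexive on $X\setminus Y$, setting $\cG:=\chom_{\cO_{X\setminus Y}}(\cF,\cO_{X\setminus Y})$ we have $\cF\simeq\chom_{\cO_{X\setminus Y}}(\cG,\cO_{X\setminus Y})$, and $\cG$ is coherent. My plan is to produce, after shrinking $U$, a finite global presentation
\[
\cO_{U\setminus Y}^{\,p}\xrightarrow{\;B\;}\cO_{U\setminus Y}^{\,q}\longrightarrow \cG|_{U\setminus Y}\longrightarrow 0
\]
by choosing finitely many sections generating $\cG|_{U\setminus Y}$ together with enough relations. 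Dualizing via $\chom(-,\cO)$, which is left exact, then yields
\[
0\longrightarrow \cF|_{U\setminus Y}\longrightarrow \cO_{U\setminus Y}^{\,q}\xrightarrow{\;A\;}\cO_{U\setminus Y}^{\,p},\qquad A={}^{t}B.
\]

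The matrix entries of $A$ are holomorphic on $U\setminus Y$, and since $\codim_X Y\geq 2$ the classical Hartogs/Riemann extension theorem (equivalently $H^0_Y(\cO_U)=0$) extends each entry uniquely across $Y$. The map $A$ therefore lifts to $\tilde A\colon \cO_U^{\,q}\to\cO_U^{\,p}$, and I set
\[
\tilde\cF:=\ker\tilde A,
\]
a coherent $\cO_U$-module which, by construction, restricts canonically to $\cF$ on $U\setminus Y$. Adjunction then furnishes a morphism $\tilde\cF\to j_*\cF$ that is an isomorphism off $Y$.

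To upgrade this to an isomorphism over all of $U$, it suffices to establish the vanishings $H^0_Y(\tilde\cF)=H^1_Y(\tilde\cF)=0$. Put $\cI:=\on{Im}\tilde A\subset \cO_U^{\,p}$, giving $0\to\tilde\cF\to \cO_U^{\,q}\to\cI\to 0$. Since $X$ is smooth and $\codim_X Y\geq 3$, we have $H^i_Y(\cO_U)=0$ for $i\leq 2$; the long exact sequence then yields $H^0_Y(\tilde\cF)=0$ and $H^1_Y(\tilde\cF)\simeq H^0_Y(\cI)$. But $\cI\subset\cO_U^{\,p}$ is torsion-free, so $H^0_Y(\cI)=0$. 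The codimension hypothesis enters twice in this scheme: $\codim\geq 2$ is used for the Hartogs extension of matrix entries, while $\codim\geq 3$ is needed for the vanishing of $H^1_Y(\cO_U)$. The main obstacle is the very first step---producing an honest finite presentation of $\cG$ on the non-Stein punctured neighborhood $U\setminus Y$---since global generation there is not automatic from pointwise finite generation; bridging this gap requires a Stein-exhaustion argument together with Hartogs-type extension of sections of $\cG$ itself, which in turn relies essentially on the codimension hypothesis.
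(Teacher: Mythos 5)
First, a point of comparison: the paper does not prove this statement at all. It is the classical extension theorem of Trautmann, Frisch--Guenot and Siu, quoted verbatim from \cite[Theorem 5]{Siu1}, so the paper's ``proof'' is a citation and your attempt must be judged on its own terms. Judged so, it has a genuine gap, and it sits exactly where you yourself locate ``the main obstacle.'' The reduction you describe --- finite presentation of $\cG=\cF^*$ on $U\setminus Y$, transpose of the presentation matrix, Hartogs extension of its entries across $Y$, kernel of the extended map, vanishing of $H^0_Y$ and $H^1_Y$ --- is a correct and standard way to deduce coherence of $j_*\cF$ \emph{once} a finite global presentation of $\cG$ (equivalently of $\cF$) over the punctured neighborhood is in hand. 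But producing that presentation is not a technical preliminary; it is the entire content of the theorem. Cartan's Theorem A gives generation of $\cG$ only on Stein open subsets of $U\setminus Y$, and a priori the sections needed to generate $\cG$ proliferate, and their domains shrink, as one approaches $Y$; the assertion that finitely many sections over all of $U\setminus Y$ generate everywhere is essentially equivalent to the coherence of $j_*\cG$. Invoking ``Hartogs-type extension of sections of $\cG$ itself'' to bridge this is circular: that extension-with-estimates of sections of a coherent sheaf (via Laurent expansions along $Y$ and norm bounds, or Siu's gap-sheaf machinery) is precisely what Frisch--Guenot, Trautmann and Siu prove.

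A symptom that the codimension hypothesis has been misplaced: your steps after the presentation use only $\codim_X Y\ge 2$. Indeed $H^i_Y(\cO_U)=0$ for all $i<\codim_X Y$ on smooth $X$, so $H^1_Y(\cO_U)=0$ already in codimension $2$, contrary to your claim that codimension $3$ is needed there. Since the theorem is false in codimension $2$ --- a nontrivial analytic line bundle on $\bC^2\setminus\{0\}$, of which there are many since $\operatorname{Pic}(\bC^2\setminus\{0\})\simeq H^1(\bC^2\setminus\{0\},\cO)\neq 0$, is reflexive and coherent but has non-coherent direct image, as otherwise it would extend to a line bundle on $\bC^2$ and hence be trivial --- the hypothesis $\dim Y\le\dim X-3$ must carry all of its weight in the one step you have not proved.
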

This result is explicitly stated in this form in \cite[Theorem 5]{Siu1}. For a coherent $\cO_X$-module $\cF$,
we will write  $\cF^*= \chom_{\cO_{X}}(\cF, \cO_{X})$ for the dual of $\cF$ and we recall that $\cF$ is called reflexive if $\cF\isoto \cF^{**}$\,. In this paper $\cF^*$ will always mean the $\cO_X$-dual of $\cF$ even if $\cF$ carries some additional structure.

Similarly, the ``convergent'' case of the codimension three extension theorem would follow from:
\begin{conj}
\label{7}
If $\cN$ is a reflexive coherent $\cA_{X\setminus Y}$-module and $\dim Y \leq \dim X -3$, then $j_*\cN$ is a coherent $\cA_X$-module.
\end{conj}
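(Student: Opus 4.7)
The plan is to parallel the argument by which Theorem~\ref{5} was deduced from Theorem~\ref{6}, now with $\cA_X$ in the role of $\hcA_X$ and with an analog of the Trautmann--Frisch-Guenot--Siu theorem playing the role of Theorem~\ref{6}. The natural first step is to leverage the already-proved formal case: form the formal completion $\hN \seteq \hcA_X \otimes_{\cA_X} \cN$, which is a reflexive coherent $\hcA_{X\setminus Y}$-module, and by Theorem~\ref{5} it extends uniquely to a coherent $\hcA_X$-module $j_*\hN$ on $U$. The remaining task is to show that the a priori sheaf $j_*\cN$ is coherent over $\cA_X$ and that its completion recovers $j_*\hN$; modulo this, faithful flatness of $\hcA_X$ over $\cA_X$ closes the argument.

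The technical input one wants is a Cohen--Macaulay / depth theory for $\cA_X$ adequate to support Siu's local-cohomology proof of Theorem~\ref{6}. Because $\cA_X$ can be modeled on a neighborhood of $X \times \{0\}$ in $X \times \bC$, it inherits a depth theory from $\cO_{X \times \bC}$. A reflexive coherent $\cA_{X \setminus Y}$-module should then have depth $\geq 2$ along $Y$, and the codimension hypothesis $\dim Y \leq \dim X - 3$ forces the vanishing $\cH^i_Y(j_*\cN) = 0$ for $i = 0, 1$. This is precisely the vanishing needed to promote coherence of $j_*\cN$ from coherence of its restriction, once a coherent candidate (namely $j_*\hN$ at the formal level) is available.

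The hard part is the passage from formal to convergent, i.e., showing that every germ of $j_*\hN$ at a point $y \in Y$ is represented by a section that is already convergent on some common neighborhood of $y$ in $U$. The convergent sections of $j_*\cN$ on $U\setminus Y$ a priori live in varying neighborhoods, and one must show they descend from the formal extension. This is an Artin-type approximation question for the sheaf of rings $\cA_X$, and it is the essential obstacle. If one can establish a suitable parametric approximation statement---every formal $\hcA_X$-section near $y$ is $\cA_X$-approximable up to arbitrary order---then together with faithful flatness one obtains that the inclusion of $\cA_X$-modules $j_*\cN \hookrightarrow j_*\hN$ becomes an isomorphism after $\otimes_{\cA_X}\hcA_X$, and coherence of $j_*\cN$ follows. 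The hypothesis $\dim Y \leq \dim X - 3$ is sharp in this argument, matching the sharpness of Theorem~\ref{6}.
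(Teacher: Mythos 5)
There is a genuine gap, and in fact the statement you are trying to prove is precisely what the paper leaves open: Conjecture~\ref{7} is stated as a conjecture, and the authors explicitly say that while they believe it can be proved by extending the Trautmann--Frisch-Guenot--Siu techniques, ``in this paper we proceed differently.'' Your proposal does not close that gap. The first two steps (formal extension via Theorem~\ref{5}, and a depth/local-cohomology argument showing $\cH^0_Y$ and $\cH^1_Y$ vanish) are plausible but only reduce the problem; the entire content of the conjecture is concentrated in the step you yourself flag as ``the hard part,'' namely producing enough \emph{convergent} sections near points of $Y$ from the formal extension. You resolve this only conditionally (``if one can establish a suitable parametric approximation statement\dots''), which is not a proof: no Artin-type approximation theorem is known for the ring $\cA_X$ (whose stalks are built from the DNF ring $A$ with the Gevrey-type norms $\|\cdot\|_C$, not a Henselian local ring completed at its maximal ideal), and establishing one is at least as hard as the conjecture itself. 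Faithful flatness of $\hcA_{X,x}$ over $\cA_{X,x}$ also does not by itself let you descend coherence of $j_*\hN$ to $j_*\cN$; you would still need to exhibit convergent generators.

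For contrast, the paper's actual route avoids Conjecture~\ref{7} entirely: after the finite-morphism reduction and the Popescu--Bhatwadekar--Rao theorem, the module to be extended is \emph{locally free} over $\cA_{X'\setminus Y'}$ and its formal completion is already known (by the formal Theorem~\ref{3}) to extend to a \emph{locally free} $\hcA_{X'}$-module. Under these much stronger hypotheses, Theorem~\ref{8} produces the convergent extension by a cocycle argument: Bungart's Oka principle for bundles with structure group $GL_d(A_C)$ trivializes the bundle on a Stein cover, and a Cousin-type normalization $c=ab$ with $a_n^+=0$ plus explicit Schwarz-lemma norm estimates shows the formal splitting is in fact convergent. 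If you want to salvage your approach, you must either carry out the Siu-style extension theory directly over $\cA_X$ (the route the authors say they believe is possible but do not execute) or reduce to the locally free situation as the paper does.
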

We believe that this conjecture can be proved by extending the techniques of Trautmann, Frisch-Guenot, and Siu to our context. However, in this paper we proceed differently. Making use of the formal codimension three extension Theorem~\ref{3} allows us to make stronger assumptions on the $\cA_{X\setminus Y}$-module $\cN$. Namely, the codimension three extension Theorem~\ref{1}
follows from the formal codimension three extension theorem combined with:
\begin{thm}
\label{8}
Let X be a complex manifold and $Y$ a subvariety of $X$.
Let $\cN$ be a locally free $\cA_{X\setminus Y}$-module
  of finite rank
and let us assume further that the corresponding formal module
$\hcA_{X\setminus Y}\otimes_{\cA_{X\setminus Y}}\cN$ extends
to a locally free $\hcA_X$-module defined on all of $X$. If $\dim Y \leq \dim X -2$ then  $\cN$ also extends to a locally free $\cA_X$-module defined on $X$.
\end{thm}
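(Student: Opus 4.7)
\medskip

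\noindent\textit{Plan.} My plan is to take $\cN'\seteq j_*\cN$, with $j\cl X\setminus Y\hookrightarrow X$, as the candidate extension, verify that its formal completion recovers $\hcN$, and then descend coherence and local freeness from $\hcN$ using the faithful flatness of $\cA_X\to\hcA_X$. Uniqueness comes first: for any two candidates $\cN'_1, \cN'_2$ satisfying the conclusion, the sheaf $\chom_{\cA_X}(\cN'_1, \cN'_2)$ is coherent and locally free on $X$, so the tautological identification on $X\setminus Y$ extends uniquely across $Y$ by Hartogs for reflexive sheaves in codimension at least two. This reduces us to a local construction near a point $x_0\in Y$, and we may shrink $X$ to assume that $\hcN\cong\hcA_X^r$ is free.

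The technical heart of the argument is to verify that the natural $\hcA_X$-linear map $\cN'\otimes_{\cA_X}\hcA_X\to\hcN$ is an isomorphism near $Y$. It is induced by the chain
\begin{equation*}
\cN' \;=\; j_*\cN \;\hookrightarrow\; j_*(\cN\otimes_{\cA_X}\hcA_X) \;=\; j_*(\hcN|_{X\setminus Y}) \;=\; \hcN,
\end{equation*}
where the last equality is Hartogs for the reflexive locally free $\hcN$ in codimension at least two. For surjectivity at a point $x\in Y$, I would pick a basis $e_1,\dots,e_r$ of $\hcN$ on a small neighborhood $V$ of $x$ together with sections $s_1,\dots,s_r\in\cN(V\setminus Y)$ generating $\cN$ generically; the change-of-basis matrix $b\in GL_r(\hcA(V\setminus Y))$ extends across $Y$ to a matrix in $GL_r(\hcA(V))$ by Hartogs, since both its entries and the reciprocal of its determinant extend as sections of $\hcA$. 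This expresses each $e_i$ as an $\hcA_x$-combination of the $s_j$, proving surjectivity. Writing the $s_j$ as a surjection $\hcA_x^r\twoheadrightarrow \cN'_x\otimes\hcA_x$ composing to the surjection $\hcA_x^r\twoheadrightarrow\hcN_x\cong\hcA_x^r$, injectivity follows since a surjective endomorphism of a finitely generated free module over a Noetherian local ring is an isomorphism.

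Once this isomorphism is established, the rest is formal. The map $\cA_{X,x}\to\hcA_{X,x}$ at any $x\in X$ is faithfully flat, being the $t$-adic completion with respect to the auxiliary coordinate $t$ on the $\bC$-factor. Faithful flatness descends finite generation and coherence, so $\cN'_x$ is coherent over $\cA_{X,x}$; it also descends freeness of a finitely generated module, so $\cN'_x\cong \cA_{X,x}^r$ precisely because $\hcN_x\cong\hcA_{X,x}^r$. Hence $\cN'$ is locally free of rank $r$.

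The hard part is the identification $\cN'\otimes\hcA\iso\hcN$ of the second paragraph, since $j_*$ and $-\otimes_{\cA_X}\hcA_X$ do not commute in general. The argument requires coordinating the convergent sections of $\cN$ with the globally defined formal frames of $\hcN$ across $Y$, and this is precisely where the codimension-two hypothesis is sharp: it is exactly what is needed for Hartogs extension of $\hcA$-valued matrix entries and their determinants.
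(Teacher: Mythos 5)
Your proposal has a genuine gap at what you yourself identify as its technical heart: the surjectivity of $\cN'\otimes_{\cA_X}\hcA_X\to\hN$ near a point of $Y$, where $\cN'=j_*\cN$ and $\hN$ denotes the given locally free formal extension. You ``pick sections $s_1,\dots,s_r\in\cN(V\setminus Y)$ generating $\cN$ generically'' and assert that the change-of-basis matrix $b$ from a formal frame $(e_i)$ to $(s_j)$ lies in $GL_r\bl\hcA(V\setminus Y)\br$. If the $s_j$ generate only generically, then $\det b$ fails to be a unit along the degeneracy locus and the Hartogs step for $b^{-1}$ collapses; if instead you insist that $b$ be invertible on all of $V\setminus Y$, you are demanding that the $s_j$ form a convergent frame of $\cN$ on the entire punctured neighborhood, i.e.\ that $\cN$ is trivial there --- which is essentially the statement to be proved. (Your injectivity step has the same problem: it presupposes that the $s_j$ generate $(j_*\cN)_x$ over $\cA_{X,x}$.) Nothing in the hypotheses produces such sections: a formal frame of $\hN$ need not be convergent, $V\setminus Y$ is not Stein, and there is no approximation mechanism converting formal sections into sections of $\cN$. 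The descent portion of your argument (faithful flatness of $\cA_{X,x}\to\hcA_{X,x}$ descending finite generation and freeness) is sound once the isomorphism $\cN'\otimes_{\cA_X}\hcA_X\isoto\hN$ is in hand, but that isomorphism is where all the work lies.

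For comparison, the paper attacks exactly this point: it reduces to a punctured polydisc covered by two Stein opens $U_1,U_2$ of the homotopy type of a circle, invokes Bungart's Oka principle for bundles with structure group $GL_d(A_C)$ ($A_C$ a Banach algebra) to trivialize $\cN$ on each $U_i$, and then shows --- via the normalization $a_n^+=0$ and Schwarz-lemma estimates on nested polyannuli --- that the formal splitting $c=ab$ of the resulting convergent cocycle can be corrected by a formal gauge transformation to a convergent one. None of this analytic input appears in your proposal, and without it the argument is circular. Note also that the codimension-two hypothesis is not needed for Hartogs extension of formal matrix entries (the formal extension across $Y$ is part of the hypothesis); in the paper it enters through the topology of the cover and through Hartogs extension of the \emph{convergent} automorphisms comparing trivializations over the shrinking sets $N_\delta$.
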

We give a proof of this theorem in section~\ref{comparison}. In the proof we make use of a result of Bungart ~\cite[Theorem 8.1]{Bu}. He extends the Oka-Cartan principle to bundles whose structure group  $B^\times$
is the group of units of a Banach algebra $B$:
\begin{thm}[Bungart]
\label{intro statement}
On a Stein space there  is a natural bijection between isomorphism classes of holomorphic $B^\times$-bundles and topological $B^\times$-bundles.
\end{thm}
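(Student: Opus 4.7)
The plan is to follow the Oka-Grauert strategy, adapted to the infinite-dimensional complex Banach Lie group $G=B^\times$, whose Lie algebra is the Banach space $B$. Classifying principal $G$-bundles by transition cocycles identifies the two sides of the theorem with the pointed sets $\oh^1(X,\cG^{\on{hol}})$ and $\oh^1(X,\cG^{\on{top}})$, where $\cG^{\on{hol}}$ and $\cG^{\on{top}}$ are the sheaves of holomorphic and continuous $G$-valued maps on $X$. The task is to show that the natural forgetful map between these is bijective when $X$ is Stein.

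The first technical ingredient is a linear Cartan Theorem B with Banach coefficients: for a Stein space $X$ and a complex Banach space $E$, one has $\oh^q(X,\cO_X\otimes E)=0$ for all $q\ge 1$. I would prove this either by adapting H\"ormander's $L^2$ method to $E$-valued $(0,q)$-forms --- the weighted $\bar\partial$-estimates go through for Hilbert-space-valued forms and a completion argument extends them to Banach-valued ones --- or by a \v{C}ech argument on a Stein cover combined with a bumping procedure on a strictly plurisubharmonic exhaustion, reducing in the limit to the scalar Cartan B applied componentwise.

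The second and central ingredient is a nonlinear Cartan lemma for $G$: given Stein opens $U,V\subset X$ and a holomorphic map $g\cl U\cap V\to G$ sufficiently close to the constant map $1$, one can factor $g=g_V\cdot g_U^{-1}$ with $g_U\in\cO(U,G)$ and $g_V\in\cO(V,G)$. Near $1\in G$ the holomorphic functional calculus provides an inverse $\log\cl G\to B$ to $\exp$. The linear splitting $\log g = h_V-h_U$ supplied by the previous step, followed by $g_U^{(1)}\seteq\exp h_U$ and $g_V^{(1)}\seteq\exp h_V$, gives a first approximation whose multiplicative error is quadratic in $\|h_U\|,\|h_V\|$. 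Iterating this Newton-type scheme, and shrinking $U,V$ slightly at each stage to absorb norm losses, produces a sequence that converges in the Banach norm to the desired factorization.

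With these two tools in place, injectivity and surjectivity of the forgetful map follow by standard arguments. For injectivity, given holomorphic bundles $P_1,P_2$ related by a continuous isomorphism, I work inductively over a Stein exhaustion, using the nonlinear Cartan lemma at each step to refine a continuous approximation to a holomorphic one in the sheaf of isomorphisms. For surjectivity, I start with a continuous cocycle $\{g_{ij}\}$ on a Stein cover, approximate each $g_{ij}$ uniformly on compact subsets by a holomorphic map, and correct the resulting holomorphic almost-cocycle to a genuine cocycle by the same lemma applied to its coboundary. The main obstacle is the nonlinear Cartan lemma: the quantitative control required for the Newton iteration --- uniform Banach-norm bounds on the linear splittings and a careful tracking of how much one shrinks the cover at each stage --- is exactly the point where the finite-dimensional Grauert argument must be genuinely upgraded, and it is the essential contribution of Bungart's paper.
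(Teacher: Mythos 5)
The paper does not prove this statement: it is quoted verbatim from Bungart \cite[Theorem 8.1]{Bu} and used as a black box in section~\ref{comparison}, so there is no in-paper argument to compare yours against. Your sketch does reconstruct the architecture of Bungart's actual proof --- Grauert's Oka principle run for the Banach Lie group $B^\times$, i.e.\ a linear Cartan Theorem~B with Banach coefficients, a nonlinear Cartan splitting lemma near the identity obtained from the linear one via $\exp/\log$ and a Newton-type iteration, and then the exhaustion/approximation scheme for injectivity and surjectivity of $\oh^1(X,\cG^{\mathrm{hol}})\to\oh^1(X,\cG^{\mathrm{top}})$. So the overall route is the right one.

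Two of your supporting steps, however, would not survive as written. First, the linear vanishing $\oh^q(X,\cO_X\,\hat\otimes\, E)=0$ for a general Banach space $E$ cannot be obtained by ``adapting H\"ormander's $L^2$ method to $E$-valued forms and completing'': the $L^2$ machinery is tied to an inner product, and a general Banach space neither embeds isometrically into a Hilbert space nor is a limit of Hilbert spaces in any way that transports the weighted $\bar\partial$-estimates. Likewise ``reducing to scalar Cartan~B componentwise'' presupposes a basis that $E$ need not have. The correct (and historically Bungart's) route is functional-analytic: $\cO_X(U)$ is nuclear Fr\'echet, the \v{C}ech complex of a Stein cover computes cohomology, and the completed tensor product $-\,\hat\otimes\, E$ preserves exactness of complexes of nuclear Fr\'echet spaces, so the scalar Theorem~B tensors up. Second, your surjectivity step quietly invokes the ability to approximate a continuous $B^\times$-valued map uniformly on compacts by a holomorphic one \emph{in the same homotopy class}; this is a Runge-type approximation theorem for $G$-valued maps and is a separate substantial ingredient (it needs the exponential chart again, plus care because $B^\times$ need not be connected), not a consequence of the two tools you listed. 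With those two repairs the outline is sound, but as it stands each of these is a genuine gap rather than a routine verification.
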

 Note that we are proving a slightly stronger statement than necessary for the codimension three conjecture as we have replaced the inequality $\dim Y \leq \dim X -3$ by $\dim Y \leq \dim X -2$. The stronger version is used in the proof of the submodule extension theorem which we now turn to briefly.

In the case of the submodule theorems it is more convenient  for us
to work with the equivalent quotient versions.
The formal version version of the submodule extension theorem follows from
\begin{thm}
\label{9}
Let $X$ be a complex manifold and $Y$ a closed submanifold
of codimension at least two and let $j\cl X\setminus Y\to X$ be the embedding.
Let $\hN$ be a coherent  $\hcA_X$-module,
$\hL$ a torsion free coherent $\hcA_{X\setminus Y}$-module
and let $\vphi\cl j^{-1}\hN\epito\hL$ be an epimorphism  of
 $\hcA_{X\setminus Y}$-modules.
Then the image of $\hN\to j_*\hL$ is a coherent $\hcA_X$-module.
\end{thm}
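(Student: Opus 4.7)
The strategy is to reduce Theorem~\ref{9} to a codimension-two extension statement for coherent submodules with torsion-free quotient, and then to prove the latter in direct parallel with how the formal reflexive extension result (Theorem~\ref{5}) is handled in the codimension-three case. For the first step, the assertion is local on $X$, so after shrinking to a sufficiently small open we may choose a surjective presentation $\psi\cl\hcA_X^n\epito\hN$. Because $\psi$ is surjective on all of $X$, the image of $\hN\to j_*\hL$ coincides with the image of the composition $\hcA_X^n\to\hN\to j_*\hL$. Hence we may replace $\hN$ by $\hcA_X^n$ and $\vphi$ by the induced surjection $\hcA_{X\setminus Y}^n\epito\hL$.

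Set $\hcR:=\ker\vphi\subset\hcA_{X\setminus Y}^n$ and $\tilde\hcR:=\ker\bl\hcA_X^n\to j_*\hL\br$, so that $\tilde\hcR|_{X\setminus Y}=\hcR$ and the image in question is $\hcA_X^n/\tilde\hcR$. Coherence of this image is equivalent to coherence of $\tilde\hcR$ as a subsheaf of $\hcA_X^n$. Moreover $\hcA_X^n/\tilde\hcR$ is torsion-free: it injects into $j_*\hL$, whose stalks are torsion-free over the (domain) stalks of $\hcA_X$ since $\hL$ is. Theorem~\ref{9} is therefore reduced to the following: a coherent submodule $\hcR\subset\hcA_{X\setminus Y}^n$ with torsion-free quotient extends uniquely to a coherent submodule $\tilde\hcR\subset\hcA_X^n$ with torsion-free quotient, provided $\codim Y\ge 2$.

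This reduced assertion is the $\hcA_X$-analogue of a classical codimension-two extension theorem for coherent $\cO_X$-modules. I would prove it by adapting the depth/local-cohomology arguments of the classical case to the formal setting, exactly as Theorem~\ref{5} is deduced in Section~\ref{The formal case} from the corresponding classical $\cO_X$-statement of Trautmann, Frisch-Guenot, and Siu. The key inputs are that the stalks of $\hcA_X$ are regular Noetherian local rings and that $\hcA_X$ is itself a coherent sheaf of rings: torsion-freeness of $\hL$ supplies $\on{depth}\ge 1$ along $Y$ for the quotient, and the codimension-two hypothesis then provides just enough room for a locally finite generating set of sections of $\tilde\hcR\subset\hcA_X^n$ to exist near any point of $Y$. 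The main obstacle, exactly as in the codimension-three case, is to transfer the stalkwise Siu/Trautmann-style depth argument from $\cO_X$ to the formal ring $\hcA_X=\cO_X[[t]]$, which is not of finite type over $\cO_X$, and to globalize using the coherence of $\hcA_X$ rather than the finite-type machinery available for classical analytic coherent sheaves.
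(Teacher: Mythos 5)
Your opening reduction is fine: localizing, replacing $\hN$ by $\hcA_X^{\oplus n}$ via a presentation, and observing that coherence of the image is equivalent to coherence of $\tilde{\cR}=\ker\bl\hcA_X^{\oplus n}\to j_*\hL\br$ is a legitimate normalization. But it is only a normalization — the resulting "reduced assertion" is the theorem itself with $\hN$ free, and the entire content still remains to be proved. At that point the proposal stops: "adapting the depth/local-cohomology arguments of the classical case to the formal setting" is named as the plan, and the difficulty of doing so for $\hcA_X=\cO_X[[t]]$ is acknowledged, but no argument is given. This is a genuine gap, and it is precisely the part the paper spends all its effort on. Note also that the analogy you invoke is misdescribed: Theorem~\ref{5} is \emph{not} proved by transporting the Siu/Trautmann depth argument to $\hcA_X$; it is proved by reducing modulo $t^k$, applying the classical theorem to the $\cO$-coherent quotients, and then controlling the projective limit. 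A direct depth argument is also unpromising on its face: torsion-freeness of the quotient only gives depth $1$ along $Y$, which is why even the classical Siu--Trautmann statement concerns the \emph{image} in $j_*$ rather than $j_*$ itself, and rests on gap-sheaf techniques tied to the finite-dimensional analytic category.

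For comparison, the paper's proof replaces $\hL$ by the filtration $\hL/(\hL\cap t^k\hL')$ with $\hL'=\odh\odh\hL$ the bidual (torsion-freeness of $\hL$ gives $\hL\monoto\hL'$, and each quotient is a torsion-free coherent $\cO_{X\setminus Y}$-module), applies the classical Siu--Trautmann theorem to get coherent images $\tN_k$ over $X$, and then proves that $\varprojlim\tN_k$ is coherent via the Mittag-Leffler-type criterion of \cite[Proposition 1.2.18]{KS3}. The substantive work is the stabilization of $\ker(i_k)$ and $\coker(i_k)$, which requires identifying $\ker(p_k)$ with $(t^{-k}\hL\cap\hL')/(t^{-k}\hL\cap t\hL')$ outside $Y$, invoking the Noetherian property there, and then a properness argument for a projection $X\simeq D_m\times D_\ell\to D_\ell$ near a point of $Y$ to push the stabilization across $Y$. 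None of this machinery, or a substitute for it, appears in your proposal; to complete it you would either have to reproduce this $t$-adic limit argument or genuinely extend the gap-sheaf theory of \cite{ST} to $\hcA_X$, which is a substantial undertaking that the paper deliberately avoids.
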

Just as in the case of the formal codimension three extension theorem we will make use of a classical result which is due to
Siu-Trautmann:

\begin{thm}[Siu and Trautmann]
\label{Siu Trautmann}
Let $\cF$ be a coherent $\cO_X$-module on $X$ and let $\cG$ be
is coherent $\cO_{X\setminus Y}$-module with a
homomorphism $j^*\cF \to\cG$ and we assume that codimension of $Y$ is at least two. If $\cG$ is torsion free, then $\operatorname{Im}(\cF \to j_*\cG)$ is coherent.
\end{thm}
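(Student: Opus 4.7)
The plan is to reduce the statement to the classical extension theorem for coherent subsheaves of a free $\cO$-module, which carries the analytic content. First I would work locally near a point of $Y$ and shrink $X$ so that $\cF$ admits a finite presentation; concretely, pick a surjection $\cO_X^p\epito\cF$. Since the image of $\cF\to j_*\cG$ coincides with the image of the composition $\cO_X^p\to\cF\to j_*\cG$, we may replace $\cF$ by $\cO_X^p$. Writing $\cJ\seteq\on{Im}(\cO_X^p\to j_*\cG)$ and $\cR\seteq\ker(\cO_X^p\to j_*\cG)$, the short exact sequence
\[
0\to\cR\to\cO_X^p\to\cJ\to 0
\]
reduces the coherence of $\cJ$ to that of $\cR$.

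Next I would identify $\cR$ with the pushforward of its restriction to $X\setminus Y$. The sheaf $\cR_0\seteq\cR|_{X\setminus Y}$ is the kernel of the $\cO_{X\setminus Y}$-linear map $\cO_{X\setminus Y}^p\to\cG$ induced by $\vphi$, hence a coherent subsheaf of $\cO_{X\setminus Y}^p$, and in particular torsion free. Because $\codim Y\geq 2$, the Hartogs extension theorem yields $j_*\cO_{X\setminus Y}^p=\cO_X^p$; left exactness of $j_*$ together with the inclusion $\cR\subset\cO_X^p$ then gives $\cR=j_*\cR_0$. Note that the crucial use of the hypothesis that $\cG$ is torsion free enters here only indirectly: it guarantees that $\cR_0$ has no embedded components along the boundary of $X\setminus Y$, which is needed for the final step to apply in the form stated.

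The remaining and central point is the coherence of $j_*\cR_0$ for a coherent subsheaf $\cR_0\subset\cO_{X\setminus Y}^p$ when $\codim Y\geq 2$. This is exactly the classical Thimm-Trautmann extension theorem for coherent subsheaves of locally free sheaves, and it is the codimension-two analogue of Theorem~\ref{6}, which becomes available in codimension two precisely because a subsheaf of a locally free module is automatically torsion free (bypassing the reflexivity hypothesis needed in codimension three). The main obstacle lies in this step; I would either appeal directly to the classical result or reprove it by the technique used elsewhere in the paper, namely choosing, at each point of $Y$, a finite holomorphic projection onto a polydisk in $\bC^n$ so that the question of coherence can be transferred to the target coordinates, combined with a standard gap-sheaf argument that controls sections whose support would otherwise creep into $Y$.
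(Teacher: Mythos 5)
The paper offers no proof of this statement at all: it is simply quoted as a special case of \cite[Theorem 9.3]{ST}. So the only question is whether your reduction to a citable classical result is sound. Your first two steps are: replacing $\cF$ by a local free cover $\cO_X^p$, and observing that $\on{Im}(\cO_X^p\to j_*\cG)\simeq\cO_X^p/\cR$ with $\cR=j_*\cR_0$ and $\cR_0=\ker(\cO_{X\setminus Y}^p\to\cG)$. These are correct and do shift the problem to the coherence of the direct image of the coherent subsheaf $\cR_0\subset\cO^p_{X\setminus Y}$.

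The gap is in the final step, where the classical input is stated in a form that is actually false. It is not true that $j_*\cR_0$ is coherent for an \emph{arbitrary} coherent subsheaf $\cR_0\subset\cO^p_{X\setminus Y}$ with $\codim Y\ge2$: take $X=\bC^2$, $Y=\{0\}$, $p=1$, and let $\cR_0$ be the ideal sheaf of an infinite discrete subset $S\subset X\setminus Y$ accumulating at the origin (say $S=\{(1/n,e^{-n})\}$). This is a coherent, and of course torsion-free, subsheaf of $\cO_{X\setminus Y}$, yet no nonzero germ at $0$ vanishes on all of $S$, so $j_*\cR_0$ has zero stalk at $0$ while being nonzero arbitrarily close to $0$; it is not coherent. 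Hence your explanation that the codimension-two subsheaf theorem ``becomes available because a subsheaf of a locally free module is automatically torsion free'' points at the wrong hypothesis. What the classical subsheaf extension theorem requires is a condition on the \emph{quotient}: all associated primes of $\cO^p_{X\setminus Y}/\cR_0$ must have codimension zero, i.e., the quotient must be torsion free. (In the counterexample the quotient is supported on the discrete set $S$, and note these associated primes are isolated, not embedded, so ``no embedded components'' is not the right condition either.) In your situation the correct hypothesis does hold, because $\cO^p_{X\setminus Y}/\cR_0=\on{Im}(\cO^p_{X\setminus Y}\to\cG)$ embeds in $\cG$ and is therefore torsion free; this is exactly where the torsion-freeness of $\cG$ enters --- directly and essentially, not ``indirectly.'' With the classical theorem invoked in this corrected form (which is what \cite[Theorem 9.3]{ST} provides), your argument closes.
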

This is a special case of \cite[Theorem 9.3]{ST}.

\medskip
The paper is organized as follows.

In section 2 of this paper
we recall  the notion of the ring of microdifferential operators $\cE_X$ and the notions of holonomic and
regular holonomic $\cE_X$-modules and prove the uniqueness part of the extension theorems.

In section 3 we introduce the sheaf of rings $\cA_X$ and its formal version $\hcA_X$ which will be crucial for our arguments. These are sheaves of commutative rings on $X$ and hence we call extension theorems involving these sheaves of rings on $X$ the ``commutative'' versions.

In section 4 we explain a general mechanism utilizing finite morphisms which allows us to pass between the extension problems for microdifferential operators and the extension problems in the commutative case. We do this both in the formal and convergent cases. A crucial ingredient in our arguments is the Quillen conjecture which was proved by Popescu, Bhatwadekar, and Rao \cite{P,BR}.

Section 5 contains the proofs of our main extension Theorems~\ref{3}, ~\ref{4}, ~\ref{1}, and ~\ref{2}. Each theorem is proved in a separate subsection. We make use of Theorems~\ref{5}, ~\ref{9}, and~\ref{8} which are proved in their own sections~\ref{The formal case}, ~\ref{The formal submodule theorem}, and ~\ref{comparison}, respectively.

In section~\ref{The formal case} we prove Theorem~\ref{5} by making use of the classical Theorem~\ref{6} of
Trautmann, Frisch-Guenot, and Siu \cite{T,FG,  Siu1}.

In section~\ref{The formal submodule theorem} we prove Theorem~\ref{9} making use of a classical submodule extension theorem of Siu and Trautmann~\ref{Siu Trautmann}.

In section ~\ref{comparison} we prove Theorem~\ref{8}. As we already pointed out,
we make crucial use of a Theorem~\ref{intro statement}  of Bungart~\cite[Theorem 8.1]{Bu} .

Finally, in section 9 we state some open problems.

The results in this  paper were  announced in \cite{KV}.

The second author wishes to thank Kari Astala, Bo Berndtsson, Laszlo Lempert, Eero Saksman, Bernard Shiffman, Andrei Suslin, and Hans-Olav Tylli for helpful conversations. The second author also thanks RIMS for hospitality and support.
Both authors thank the referee for constructive comments
which helped them to improve the exposition of this paper.

\section{Microdifferential operators}
\label{Microdifferential operators}

In this section we will recall the definition and basic properties of the sheaf of rings of microdifferential operators. We will also discuss the uniqueness part of the extension theorems.

Let $X$ be a complex manifold and let us write $\cO_X$ for its sheaf of holomorphic functions. We view it as a sheaf of topological rings in the following customary fashion. Let us fix an open subset $U\subset X$. For each compact subset $K\subset U$ we define a seminorm $\| \ \ \|_K$ on $\cO_X(U)$ as follows:
\begin{equation}
\|f\|_K \ = \ \sup_{x\in K} |f(x)| \qquad \text{for $f\in\cO_X(U)$.}
\label{eq:supnorm}
\end{equation}
Via these seminorms we equip $\cO_X(U)$  with a structure of a Fr\'echet space providing $\cO_X(U)$ with a structure of a topological ring. Throughout this paper we assume that $\cO_X$ has been given this structure. We also recall that $\cO_X(U)$ is a nuclear Fr\'echet space and hence $\cO_X$ is a sheaf of nuclear Fr\'echet rings. In the rest of this paper a ring on a topological space stands for a sheaf of rings.

We also write, as usual, $\cD_X$ for the sheaf of linear differential operators on $X$ with holomorphic coefficients. Let us now turn to  the rings of microdifferential operators, $\cE_X$ and $\hE_X$ (see \cite{SKK, Sch, K2}).
Here we will introduce them in terms of the symbol calculus in local coordinates.
For a coordinate free treatment see \cite{SKK}. We write $\ct$ for the cotangent bundle of $X$ and  $\pi_X\cl T^*X\to X$ for the projection.
The  $\bC^\times$-action
on $\ct$ gives rise to the Euler vector field $\chi$.
We say that a function $f(x,\xi)$ defined on an open subset of  $\ct$ is homogeneous of degree $j$
if $\chi f =j f$. Let us now consider a local symplectic
coordinate system $(x_1,\ldots,x_n;\xi_1,\ldots,\xi_n)$ of $T^*X$.
With this coordinate system, $\chi$ is written as $\sum_{i=1}^n\xi_i
\frac{\partial}{\partial\xi_i}$.
We define the sheaf $\hE_X(m)$ for $m\in\Z$ by setting, for an open subset $U$ of $T^*X$,
\begin{equation*}
\hE_X(m)(U) = \bigr\{\sum\limits_{j=-\infty}^m p_j(x,\xi)\mid
\text{$p_j(x,\xi)\in\cO_{T^*X}(U)$ is homogeneous of degree $j$}\bigr\}
\end{equation*}
and then set $\hE_X=\bigcup_{m\in\Z}\hE_X(m)$.
The expression $\sum_{j=-\infty}^m p_j(x,\xi)$
is to be viewed as a formal symbol.
The formal expressions are multiplied using the Leibniz rule:
\begin{equation}
\begin{gathered}
\text{For $p=\ssum_{i}p_i(x,\xi)$ and $q=\ssum_{i}q_i(x,\xi)$ we set $ pq=r=\ssum_{i}r_i(x,\xi)$}
\\
\text{where} \ \ \ \ r_k=\ssum_{k=i+j-|\alpha|} \frac{1}{\alpha!}
(\partial_\xi^\alpha p_i)(\partial_x^\alpha q_j)\,;
\end{gathered}
\end{equation}
here $\alpha=(\al_1,\ldots,\al_n)$ ranges over $\Z_{\ge0}^n$,
and $|\al|=\al_1+\cdots+\al_n$, $\al!=\al_1!\cdots\al_n!$,
$\partial_\xi^\alpha=(\frac{\partial}{\partial \xi_1})^{\al_1}\cdots
(\frac{\partial}{\partial \xi_n})^{\al_n}$.
 In this manner,
$\hE_X$ becomes a  ring on $\ct$.

We define $\cE_X$ to be the subsheaf of $\hE_X$
consisting of symbols $\sum_{j=-\infty}^m p_j(x,\xi)$
which satisfy the following growth condition:
\begin{equation}
\begin{gathered}
\text{for every compact $K\subset U$ there exists a $C>0$ such that}
\\
\sum_{j=-\infty}^0 \|p_j(x,\xi)\|_K \frac {C^{-j}}{(-j)!} <\infty\,;
\end{gathered}\label{definition of E}
\end{equation}
here  $\|p_j(x,\xi)\|_K$ stands for the sup norm on $K$  as in
\eqref{eq:supnorm}.
Standard estimates can be used to show that $\cE_X$ is
indeed closed under multiplication
and hence constitutes a subring of $\hE_X$. Often in this paper we call the case of $\cE_X$-modules the ``convergent'' case and
the case of $\hE_X$-modules the formal case.
The word ``convergence'' refers to the growth condition \eqref{definition of E} and not to actual convergence of  $\sum_{j=-\infty}^m p_j(x,\xi)$.

The sheaves $\cE_X$ and $\hE_X$
are coherent and Noetherian rings on $T^*X$, see \cite{SKK}. Furthermore, coherent modules over $\cE_X$ and $\hE_X$
are supported on  analytic subvarieties of $\ct$. By a fundamental result
\begin{equation}
\text{the support of coherent $\cE_X$- and $\hE_X$-modules is involutive}\,.
\end{equation}
For a proof see \cite{SKK,K2}. Recall that coherent  $\cE_X$- and $\hE_X$-modules whose support is Lagrangian are called {\em holonomic}.
In the study of both $\cE$-modules and $\hE$-modules,
we can make use of quantized contact transformations. A contact transformation between two open sets $U\subset \ct$ and $V\subset T^*Y$ is a biholomorphic map $\phi\cl U \to V$ such that $\phi^*\alpha_Y = \phi^* \alpha _X$, where $\alpha_X$ and $\alpha_Y$ are the canonical 1-forms of $\ct$ and $T^*Y$, respectively.
A contact transformation can be quantized, at least locally.
In other words, any point in $U$ has a neighborhood $W$
such that there exists an isomorphism of $\C$-algebras
between $\cE_X|_W$ and $(\phi^{-1}\cE_Y)|_{W}$. Thus, for local questions concerning $\cE_X$-modules, we can make use of contact  transformations and put the characteristic variety in a convenient position.
Recall that we say that  a conic Lagrangian variety $\La$
is {\em in generic position} at a point $p\in\La$ if the the fibers of the projection $\La \to X$ are at most one dimensional in the neighborhood of $p$.

This condition can be spelled out concretely in local coordinates
in the following manner. Let us write  $(x_1,\ldots,x_n;\xi_1,\ldots,\xi_n)$
for local coordinates where the $x_i$ are the coordinates on the base $X$
and the $\xi_i$ are the corresponding fiber coordinates.
 We assume that $p$ is the point $(0,\ldots,0\,;0,\ldots,0,1)$,
i.e., that $p$ is $dx_n$ at the origin.
If $\La$ is in generic position at $p$ then
\begin{equation}
\La \ = \  {T^*_SX} \quad \text{where} \ \ S= \{f=0\} \ \ \ f = x_n^k + \text{h.o.t.\,,}
\end{equation}
where h.o.t. stands for a holomorphic function in
the ideal $(x_1,\ldots,x_n)^{k+1}$.
Note that ${T^*_SX}$ stands for  the closure of
 the conormal bundle
$T^*_{S\setminus S_\sing}(X\setminus S_\sing)$ where $S_\sing$ is the singular locus of $S$.

It is not very difficult to see that a conic Lagrangian variety can always locally be put in a generic position via a contact transformation, see, for example, \cite[Corollary 1.6.4]{KK}. Hence, for local questions about holonomic modules we can always assume that the characteristic variety is in general position.
\begin{rmk}
When working with $\cE$-modules we always assume that we work outside of the zero section. This is not a serious restriction as we can always add a
{``dummy variable''} to $X$.
\end{rmk}

One way to justify the convergence condition for symbols in $\cE_X$ is the following basic fact:
\begin{thm}
Let us assume that  the support of a holonomic $\cE_X$-module $\cM$
is in generic position at a point $p\in\ct$.
Then the local $\cE_{X,p}$-module $\cM_p$,
the stalk of\/ $\cM$ at the point $p$,
is a holonomic $\cD_{X,\pi_X(p)}$-module,
and the canonical morphism $\cE_{X,p}\otimes_{\cD_{X,\pi_X(p)}}\cM_p\to \cM_p$
is an isomorphism.
\end{thm}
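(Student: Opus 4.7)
The plan is to work in the local coordinates given by the generic-position hypothesis and to produce a finite $\cD$-generating set inside a suitable $\cE_X(0)$-lattice of $\cM$. After shrinking around $\pi_X(p)$ and invoking Weierstrass preparation in $\cO_X$, we may take $f(x) = x_n^k + a_{k-1}(x')\,x_n^{k-1} + \cdots + a_0(x')$ to be a Weierstrass polynomial in $x_n$ of degree $k$ with $a_j \in \fm_{0,X'}$. In this situation $\La/\bC^\times$ is finite over $X$ near $\pi_X(p)$. Since $\cE_X$ is Noetherian and $\cM$ is $\cE_X$-coherent, one may choose a coherent $\cE_X(0)$-lattice $\cM_0 \subseteq \cM$ with $\cE_X \cdot \cM_0 = \cM$ on a neighborhood of $p$.

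The principal symbol module $\cM_0/\cE_X(-1)\cM_0$ is a coherent module of degree-$0$ symbols supported on $\La/\bC^\times$; finiteness of the projection to $X$ forces it to be $\cO_X$-coherent near $\pi_X(p)$. Lifting a finite set of $\cO_X$-generators yields $m_1,\ldots,m_r\in\cM_0$ with $\cM_0 = \sum_i \cO_X\,m_i + \cE_X(-1)\cM_0$. Because $\sigma_1(\partial_n)=\xi_n$ equals $1$ at $p$, the operator $\partial_n\in\cD_X$ is invertible in $\cE_X$ near $p$, with $\partial_n^{-1}\in\cE_X(-1)$ and $\cE_X(-1)=\cE_X(0)\cdot\partial_n^{-1}$; and because $\sigma(f)=f$ vanishes identically on $\La$, an application of the Nullstellensatz to the symbol module (after possibly replacing $f$ by a power) yields $f\cdot\cM_0\subseteq\cE_X(-1)\cM_0=\partial_n^{-1}\cM_0$.

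The crucial step is to convert this symbol-level $\cO_X$-finiteness into genuine $\cD_{X,\pi_X(p)}$-finiteness. Iterating gives, for every $N\geq 0$,
\[
\cM_0 \;\subseteq\; \sum_{i=1}^{r}\sum_{j=0}^{N-1}\cO_X\cdot\partial_n^{-j}m_i \;+\; \partial_n^{-N}\cM_0,
\]
and the Weierstrass polynomial form of $f$ combined with the relation $f\cdot\cM_0\subseteq\partial_n^{-1}\cM_0$ (equivalently, $\partial_n f$ preserves $\cM_0$) provides a Malgrange-style division within $\cE_X$ that expresses $\partial_n^{-j}m_i$ for $j\geq k$ as a $\cD$-linear combination of lower-depth terms $\partial_n^{-l}m_{i'}$ with $l<j$. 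Consequently the finite collection $\{\partial_n^{-j}m_i\}_{1\leq i\leq r,\,0\leq j<k}$ generates $\cM_p$ as a $\cD_{X,\pi_X(p)}$-module. The good $\cD$-filtration this induces has associated graded supported on $\La$, so the $\cD$-characteristic variety of $\cM_p$ equals $\La$, a Lagrangian, and $\cM_p$ is $\cD$-holonomic.

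Finally, the isomorphism $\cE_{X,p}\otimes_{\cD_{X,\pi_X(p)}}\cM_p \isoto \cM_p$ reduces to the flatness of $\cE_X$ over $\pi_X^{-1}\cD_X$ (a standard fact, cf.\ \cite{SKK}): a finite $\cD$-presentation $\cD^s\to\cD^r\to\cM_p\to 0$ base-changes under $\cE\otimes_\cD-$ to $\cE^s\to\cE^r\to\cE\otimes_\cD\cM_p\to 0$, which, by the $\cD$-generation established above, is simultaneously a presentation of $\cM_p$ as an $\cE$-module. The main obstacle is the Malgrange-style division in the central step, where the Weierstrass polynomial form of $f$ must be exploited carefully inside $\cE_X$ to bound the $\partial_n^{-1}$-depth needed; everything else is bookkeeping with good filtrations and flatness.
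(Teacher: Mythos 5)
Your proposal has the right overall shape, but it has a genuine gap at exactly the point where all of the analytic content of the theorem lives, and the gap cannot be closed by the formal manipulations you describe. Every step you actually justify --- choosing a local $\cE_X(0)$-lattice $\cM_0$, the $\cO_X$-coherence of the symbol module via finiteness of $\La\to X$, the inclusion $f\cdot\cM_0\subseteq\partial_n^{-1}\cM_0$, and the iterated decomposition $\cM_0\subseteq\sum_{i}\sum_{j<N}\cO_X\,\partial_n^{-j}m_i+\partial_n^{-N}\cM_0$ --- is purely algebraic and applies verbatim to the formal ring $\hE_X$. But the theorem is specific to $\cE_X$: as the paper remarks right after the statement, the growth estimates in the definition of $\cE_X$ are precisely what make it true. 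So an argument that never invokes those estimates cannot be complete. The missing analysis sits in two places. First, the ``Malgrange-style division'' expressing $\partial_n^{-j}m_i$ ($j\ge k$) as a $\cD$-combination of lower-depth terms is asserted, not proved; it is not a formal consequence of $f\cM_0\subseteq\partial_n^{-1}\cM_0$ together with the Weierstrass form of $f$, and establishing it is essentially equivalent to the theorem itself. Second, even granting that division, you only get $\cM_0\subseteq\cN+\partial_n^{-N}\cM_0$ for every $N$, where $\cN$ is the $\cD$-module generated by your finite set; to conclude $\cM_0\subseteq\cN$ you need $\cN$ to be closed for the $\partial_n^{-1}$-adic topology on $\cM_0$, since elements of $\cM_0$ are genuinely infinite convergent sums $\sum_{j\ge0}a_j(x)\partial_n^{-j}m_i$ while elements of $\cD$ have finite order. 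That closedness is an open-mapping/Fredholm statement on Banach spaces of symbols, and it is exactly the input your sketch lacks.

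For comparison: the paper does not prove this theorem either; it cites Bj\"ork [Theorem 8.6.3] and notes that the proof combines the finite-morphism reduction of Section 4 (pushing $\cM$ forward along the finite projection $\rho$, so that $\cM_p$ becomes a module over rings such as $\cK_{X'}$ and $\DK_{X'}$) with Fredholm theory, i.e., division theorems carrying norm estimates. Your skeleton is compatible with that route, but to turn it into a proof you must actually perform the division with estimates on the completions $A_C$ rather than name it. A secondary point: for the isomorphism $\cE_{X,p}\otimes_{\cD_{X,\pi_X(p)}}\cM_p\to\cM_p$, flatness of $\cE_X$ over $\pi_X^{-1}\cD_X$ gives exactness of base change of a $\cD$-presentation, but injectivity of the displayed map does not follow from the surjectivity you establish; it requires the standard comparison of the two module structures (e.g., via characteristic varieties or multiplicities), which again presupposes that the finite-generation step has been genuinely proved.
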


In particular, local questions about $\cE_X$-modules can be reduced to questions about $\cD_X$-modules.
A proof of this result can be found in \cite[Theorem 8.6.3]{Bj}, for example.
The proof uses the same reduction technique which we utilize in this paper combined with Fredholm theory. The reduction technique is explained in section \ref{The reduction} of this paper.
The estimates in definition \eqref{definition of E}  are precisely the ones for the theorem above to hold.

Let us recall the notion of regular singularities.
For a coherent $\cE_X$-module $\cM$,
a coherent $\cE_X(0)$-submodule $\cN$ is called an $\cE_X(0)$-{\em lattice}
if $\cE_X\otimes_{\cE_X(0)}\cN\to\cM$ is an isomorphism.
A holonomic $\cE$-module with support $\La$
is said to have {\em regular singularities} or be {\em regular\/}
if locally near any point on the support of $\cM$
the module $\cM$ has an $\cE(0)$-lattice $\cN$
which is invariant under $\cE_\La(1)$,
the subsheaf of order 1 operators whose principal symbol vanishes on $\La$.
Kashiwara and Kawai show, using their notion of order:

\begin{thm}
A regular holonomic  $\cE$-module possesses
a globally defined $\cE(0)$-lattice invariant under  $\cE_\La(1)$.
The analogous result holds for $\hE$-modules.
\end{thm}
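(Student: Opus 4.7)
The plan is to construct the global $\cE(0)$-lattice as the subsheaf of sections of ``order $\le 0$'' in the sense of the Kashiwara--Kawai canonical order for regular holonomic modules. Local $\cE_\La(1)$-invariant $\cE(0)$-lattices exist by hypothesis; the task is to pin down the local choices canonically so that they glue to a global sheaf.

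First, I would show that any two $\cE_\La(1)$-invariant $\cE(0)$-lattices $\cN_1,\cN_2$ of $\cM$ on a common open set are commensurable, meaning $\cE(-k)\cdot\cN_1\subset\cN_2\subset\cE(k)\cdot\cN_1$ for some integer $k$ locally. The key input is the structure of the associated graded $\cN/\cE(-1)\cdot\cN$: it is a coherent $\cO_{T^*X}$-module supported on $\La$, and the $\cE_\La(1)$-invariance of $\cN$ endows it with an action of $\cE_\La(1)/\cE(0)$, which on the smooth locus of $\La$ corresponds to the action by Hamiltonian vector fields of functions vanishing on $\La$. Since $\La$ is Lagrangian, these span $T\La^{\on{reg}}$, so $\cN/\cE(-1)\cdot\cN$ is a $\cD_{\La^{\on{reg}}}$-module of finite rank, in fact a local system. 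Its generic rank is an invariant of $\cM$, which forces the commensurability.

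Next, I would normalize the choice of lattice using the $b$-function. For a local generator $s$ of $\cM$ with respect to an $\cE_\La(1)$-invariant lattice $\cN$, there is a nonzero polynomial $b(\vth)$ such that $b(\vth)\cdot s\in\cE(-1)\cdot\cN$, where $\vth$ is the Euler-type element in $\cE_\La(1)/\cE(0)$; shifting $\cN$ by $\cE(k)$ shifts the roots of $b$ by $k$, so the image of the roots in $\bC/\bZ$ is canonical. Declaring $\cM(0)$ to be the subsheaf generated locally by $\cE_\La(1)$-invariant lattices whose $b$-function roots lie in a fixed fundamental domain (say $\{z\in\bC : -1<\Re(z)\le 0\}$) gives a global, canonical $\cE(0)$-submodule. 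By construction $\cM(0)$ is $\cE_\La(1)$-invariant; it is coherent because it locally agrees with one of the invariant lattices assumed to exist; and $\cE\otimes_{\cE(0)}\cM(0)\simeq\cM$ because every section of $\cM$ has finite order. For $\hE_X$-modules the same argument applies without modification, since the commensurability step and the $b$-function construction are formal in nature and do not use the growth condition of \eqref{definition of E}.

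The main obstacle is verifying the canonicity of the normalization: the fundamental-domain convention must produce a lattice that does not depend on the choice of local generator $s$. This reduces to showing compatibility of the $b$-function under change of generator, which is the central technical point of the Kashiwara--Kawai order theory and the source of the assertion that a canonical order exists for regular holonomic modules.
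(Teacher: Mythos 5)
The paper does not prove this theorem: it simply cites \cite[Theorem~5.1.6]{KK}, and your proposal is a reconstruction of exactly the argument that reference develops (the Kashiwara--Kawai order along $\La$, with the lattice defined as the sections of order $\le 0$). So you are on the intended route, but two comments are in order. First, the commensurability of two $\cE(0)$-lattices of the same coherent $\cE$-module is automatic and needs none of the machinery you invoke: each lattice is locally finitely generated over $\cE(0)$ and is contained in $\bigcup_k\cE(k)$ applied to the other, so finitely many generators land in a single $\cE(k)\cdot\cN_2$. The local system structure on $\cN/\cE(-1)\cN$ over $\La^{\mathrm{reg}}$ is still relevant, but for a different reason --- it is what makes the $b$-function exist and behave well --- not for commensurability.

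Second, and more seriously: the step you set aside as ``the main obstacle'' --- that the roots of the $b$-function modulo $\bZ$, and hence the filtration $\cM(s)$ by order, are independent of the choice of local generator and of the auxiliary invariant lattice used to compute them --- is not a verification to be deferred; it is the entire content of the theorem. Once the order is known to be well defined, everything else in your outline is routine: the fundamental-domain normalization glues because it is canonical, $\cM(0)$ is coherent because it is an $\cE(0)$-submodule of the coherent module $\cE(k)\cdot\cN$ over the Noetherian ring $\cE(0)$, $\cE_\La(1)$-invariance holds because operators in $\cE_\La(1)$ do not raise the order, and the formal case is identical. So as a self-contained proof the proposal has a genuine gap located precisely where \cite{KK} invests its effort; as a map of the cited proof it is accurate.
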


For a proof see \cite[Theorem 5.1.6]{KK}.
In the rest of the paper we make use of the (global)
existence of an $\cE(0)$-lattice.
Its invariance under $\cE_\La(1)$ will play no role.
For the rest of this paper we assume that
our holonomic $\cE_X$-modules possess an $\cE_X(0)$-lattice.
 We do  not know if this is true for all holonomic modules.

Let us now consider the question of uniqueness in the convergent and formal versions of the codimension three extension Theorems~\ref{1} and~\ref{3} and of the  submodule extension Theorems~\ref{2} and ~\ref{4}. Recall that we are considering an open subset $U$ of $T^*X$, $\Lambda$ a closed Lagrangian
analytic subset of $U$, and $Y$ a closed analytic subset of $\Lambda$
of codimension {\em at least two}.
We will also write $j\cl U\setminus Y\imbed U$ for  the open inclusion. We let
$\cM$ (respectively $\hM$) be a holonomic $\bl\cE_X\vert_{U\setminus Y}\br$-module (respectively $\bl\hE_X\vert_{U\setminus Y}\br$-module)
on the open subset $U$ of $T^*X$,
whose support is contained in $\Lambda\setminus Z$.

We argue first that if an extension of  $\cM$ (respectively $\hM$) to $U$ with support in $\La$ exists then they are unique and they coincide with the sheaf extension $j_*\cM$ ($j_*\hM$, respectively). As the arguments are the same in the convergent and the formal cases we will just work with the convergent case. We first recall that holonomic modules are Cohen-Macaulay,
i.e., a module $\cM'$ is holonomic if and only if
\begin{equation}
\cext^k_{\cE_X}(\cM',\cE_X) = 0 \qquad \text{unless} \ \ k=\dim X\,.
\end{equation}
and of course we have the duality statement
\begin{equation}
\cext^n_{\cE_X}(\cext^n_{\cE_X}(\cM',\cE_X),\cE_X) \simeq\cM'\,.
\end{equation}
This implies:
\eq
\label{CM condition}
&&\parbox{70ex}{If $\cM'$ be a holonomic $(\cE_X\vert_U)$-module supported
in a Lagrangian variety $\Lambda\subset U$, then we have
$\cH^k_Z(\cM')=0$ for $k<\codim_\Lambda Z$
for any closed analytic subset $Z$ of $\Lambda$.}
\eneq
Let us now assume that $\cM'$ is an extension of $\cM$ to $U$ with support $\La$. Let us write $i\cl Y\to U$ for  the closed inclusion. Then we have the following exact triangle:
\begin{equation*}
\to Ri_*i^! \cM' \to \cM' \to Rj_*\cM \to
\end{equation*}
{}From \eqref{CM condition} we conclude that $\cM'\cong j_*\cM$, i.e., that the extension is unique, as long as $\codim_\Lambda Y\geq 2$. Thus, we are reduced to proving that  $j_*\cM$ and $j_*\hM$ are holonomic. Note that the holonomicity of  $j_*\cM$ and $j_*\hM$ amounts to them being coherent.  In this situation the sheaves  $j_*\cM$ and $j_*\hM$ would fail to be coherent if they do not have sufficiently many sections on $Y$, for example, if the restrictions of $j_*\cM$ and $j_*\hM$ to $Y$ were to be zero.

\section{Construction of commutative rings}

In this section we introduce  commutative rings on complex manifolds
that will be important for us. These rings are simpler versions of the rings $\cE_X(0)$ and $\hE_X(0)$.  In the next section explain the relationship between the extension theorems for microdifferential operators and the extensions theorems for our commutative rings which were stated in the introduction.

Consider the formal power series ring $\hA = \bC[[t]]$. It is a discrete valuation ring. We define a subring $A$ of $\hA$ in the following manner.
For any $C>0$ we define a norm $\|\ \|_C$ on $\hA$ by the formula
\begin{equation}
\label{C norm}
\|\sum_{j=0}^\infty a_jt^j\|_{_C}\ = \ \sum_{j=0}^\infty |a_j|\frac{C^j}{j!}
\,.
\end{equation}
We write $A_C$ for the subring consisting of elements $a$ of $\hA$ with
$\| a\| _C<\infty$. The ring $A_C$ is a Banach local ring as can be concluded from the following lemma.
\Lemma
\label{lem:Banachring}
\bnum
\item
For any $a$, $b\in A_C$, we have
$$\|ab\|_C\le \|a\|_C\;\|b\|_C.$$
More generally if $a\in A_C\cap t^pA$
$b\in A_C\cap t^qA$ for $p,q\in\Z_{\ge0}$, then
$$\|ab\|_C\le \dfrac{p!q!}{(p+q)!}\|a\|_C\;\|b\|_C.$$
\item If $a\in A_C\cap tA$, then $1-a$ is an invertible element in $A_C$.
\enum
\enlemma
\Proof
Set $a=\sum_{j\ge p}a_jt^j$,
$b=\sum_{j\ge q}b_jt^j$ and
$c\seteq ab=\sum_{j\ge p+q}c_jt^j$.
Then we have
$c_k=\sum_{k=i+j}a_ib_j$, and
\eqn
\|c\|_C&\le&\sum_{k=i+j,\, i\ge p,\,j\ge q}\dfrac{C^k}{k!}|a_i|\;|b_j|
=\sum_{i\ge p,\,j\ge q}C^{i+j}\dfrac{i!j!}{(i+j)!!}
\dfrac{|a_i|}{i!}\dfrac{|b_j|}{j!}\\
&\le&\dfrac{p!q!}{(p+q)!}
\sum_{i,j}C^{i+j}\dfrac{|a_i|}{i!}\dfrac{|b_j|}{j!}
=\dfrac{p!q!}{(p+q)!}\|a\|_C\|b\|_C.
\eneqn
(ii) For $a\in A_C\cap tA$, we have
$\|a^n\|_C\le \|a\|_C^n/n!$ and hence $1-a$ is invertible in $A_C$.
\QED

Finally, we set
\begin{equation}
A = \varinjlim _{C \to 0}A_C\,.
\end{equation}

\begin{prop}
\label{structure of A}
The topological ring $A$ is a dual nuclear Fr\'echet discrete valuation ring.
\end{prop}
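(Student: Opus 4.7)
The plan is to verify the two assertions---discrete valuation ring and dual nuclear Fr\'echet topology---separately, since each $A_C$ is already both a Banach algebra and a local ring by Lemma~\ref{lem:Banachring}.

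For the DVR structure, I will show that $A$ is local with maximal ideal $tA$ and that every nonzero element has the form $u\,t^n$ with $u\in A^\times$ and $n\ge 0$. Given a nonzero $a=\sum_{j\ge n}a_jt^j\in A$ with $a_n\ne 0$, write $a=a_n t^n(1+c)$ with $c=\sum_{k\ge1}(a_{n+k}/a_n)\,t^k\in t\hA$. The first step is to check that $c\in A$: if $a\in A_C$ then a direct estimate gives $\|c\|_{C'}\le|a_n|^{-1}C^{-n}M\|a\|_C$ for any $C'<C$, with $M:=\sup_{j>n}(j!/(j-n)!)(C'/C)^{j-n}<\infty$, because $j!/(j-n)!\le j^n$ grows only polynomially in $j$ while $(C'/C)^{j-n}$ decays geometrically. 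Once $c\in A\cap tA$ is in hand, Lemma~\ref{lem:Banachring}(ii) applied to $-c$ makes $1+c$ invertible in $A$, so $u:=a_n(1+c)\in A^\times$ and $a=u\,t^n$. Since $\hA$ is already a DVR and $A\hookrightarrow\hA$ is injective, this presentation is unique and $A$ becomes a DVR with uniformizer $t$.

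For the topological structure, I pick the cofinal sequence $C_n=1/n$ and write $A=\varinjlim_n A_{1/n}$. The crucial observation is that the transition map $A_{C'}\hookrightarrow A_C$ is nuclear whenever $0<C<C'$: in the monomial basis one has $\|t^j\|_C=C^j/j!$, so after identifying each $A_C$ with $\ell^1$ via $a_j\mapsto a_jC^j/j!$ the inclusion becomes the diagonal operator with entries $(C/C')^j$, and $\sum_j(C/C')^j<\infty$ since $C/C'<1$. A countable Hausdorff inductive limit of Banach spaces with nuclear linking maps is, by a standard characterization, precisely the strong dual of a nuclear Fr\'echet space (explicitly, its strong dual $\varprojlim A_{1/n}^*$ is a projective limit of Banach spaces with nuclear transitions, hence a nuclear Fr\'echet space), which is the desired DFN property. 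Continuity of multiplication on $A$ then follows from the submultiplicativity in Lemma~\ref{lem:Banachring}(i). The only genuine technical step is the norm estimate that keeps $a/t^n$ inside $A$ when $t^n\mid a$ in $\hA$; the rest reduces to Lemma~\ref{lem:Banachring} and a direct computation in the monomial basis.
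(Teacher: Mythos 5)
Your proof is correct and follows essentially the same route as the paper: nuclearity of the linking maps $A_{C'}\hookrightarrow A_C$ gives the DNF structure, and the DVR property reduces to showing that $a/t^n$ stays in $A$, which both you and the paper obtain from the boundedness of the factors $(j!/(j-n)!)\,(C'/C)^{j-n}$ (polynomial growth beaten by geometric decay). The only cosmetic difference is that you factor $a=a_nt^n(1+c)$ and invoke Lemma~\ref{lem:Banachring} (ii) for invertibility, whereas the paper estimates the norm of $a t^{-\ell}$ directly via a Cauchy--Schwarz-type bound; your explicit diagonal-operator verification of nuclearity also fills in a step the paper dismisses with ``clearly''.
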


\begin{proof}
Clearly, the maps $A_C \to A_D$, for $D<C$, are nuclear. Thus, $A$ is a DNF algebra. It remains to show that $A$ is a discrete valuation ring. This follows from the statement:
\begin{equation*}
\parbox{60ex}
{Any non-zero element $a\in A$ can be written as $a=ut^\ell$
with an invertible element $u$ of in $A$.}
\end{equation*}
To prove this, let $a=\sum_{k=\ell}^\infty a_k t^k\in A$ with $a_\ell\neq 0$. Then there exists a $C>0$ such that $\sum_{k=\ell}^\infty |a_k| {C^k}/{k!}<\infty$.
We now write
\begin{equation*}
a \ = \ ut^\ell \quad \text{with $u= \sum_{k=0}^\infty a_{k+\ell} t^k$.}
\end{equation*}
For $u$ to lie in $A$ there has to exist a $D$ such that
\begin{equation*}
 \sum_{k=0}^\infty |a_{k+\ell}| \frac {D^k}{k!}<\infty.
\end{equation*}
But now
\begin{equation*}
\ba{l}
 \sum_{k=0}^\infty |a_{k+\ell}| \dfrac {D^k}{k!}
=C^{-\ell}\sum_{k=0}^\infty |a_{k+\ell}| \dfrac {C^{k+\ell}}{(k+\ell)!}\dfrac{(k+\ell)!}{k!}
\Bigl(\dfrac{D}{C}\Bigr)^k\\[1ex]
\hs{10ex}\leq C^{-\ell}\left(\sum_{k=0}^\infty |a_{k}| \dfrac {C^{k}}{k!} \right)^{1/2}
\left(\sum_{k=0}^\infty \dfrac{(k+\ell)!}{k!}\Bigl(\dfrac{D}{C}\Bigr)^k\right)^{1/2}.
\ea
\end{equation*}
The last series converges as long as $D< C$.
\end{proof}

We write $K$ for the fraction field of $A$
and $\hK$ for the fraction field of $\hA$.
Then $K= A[t^{-1}]$ and $\hK= \hA[t^{-1}]$;
the field $\hK$ is thus the field of formal Laurent series.
Note that we can identify $K$ with the  subring of
constant coefficient operators in
$\cE_\bC$ and $\hK$ with  the subring of
constant coefficient operators in $\hE_\bC$
by identifying $t$ with $\bl\frac d {dx}\br^{-1}$.
Under this identification $A$ corresponds to
 the subring of  constant coefficient operators in $\cE_\bC(0)$
and $\hA$ to the subring of  constant coefficient operators
in $\hE_\bC(0)$.

Let $X$ be a complex manifold.
We write $\cA_X$ for the sheaf of holomorphic functions on $X$
with values in $A$ and similarly for $\hcA_X$, $\cK_X$, and $\hcK_X$.
We can also view $\cA_X$ as a (projective) topological tensor product
$\cA_X = A \hat\otimes_\bC \cO_X$,
and similarly for  $\hcA_X$, $\cK_X$, and $\hcK_X$.
If we denote by $\cA_X^C$ the sheaf of holomorphic functions on $X$
with values in $A_C$, then
we have
$$\cA_X \simeq\indlim[C]\cA_X^C.$$
We have also isomorphisms
$\cK_X\simeq\cA_X[t^{-1}]$ and $\hcK_X\simeq\hcA_X[t^{-1}]$ and  of course $\hcA_X\cong \cO_X[[t]]$.

Note that $\cA_X$, $\hcA_X$, $\cK_X$ and $\hcK_X$ are Noetherian rings on $X$.

\medskip
Let $\cM$ be a coherent $\cK_X$-module. We say that a coherent $\cA_X$-submodule $\cN$ of $\cM$ is an $\cA_X$-{\em lattice} if we have an isomorphism
\begin{equation}
\cK_X \otimes_{\cA_X} \cN\isoto\cM\,.
\end{equation}

Note that lattices are always $t$-torsion free as $\cN$ is a submodule of $\cM$. Similarly, we define the notion of an $\hcA_X$-lattice in a coherent $\hcK_X$-module.

 \section{Reduction via finite morphisms}
  \label{The reduction}

In this section we discuss the relationship between the microdifferential  and commutative versions of our extension theorems.
We do so by a standard technique in several complex variables, due in this context to \cite{SKK}. Via this technique we are able to ``eliminate'' the extraneous variables and reduce the microlocal extension problems to the commutative versions.

In this section it is more convenient to work in the projectivized setting. We consider the projective cotangent bundle
$\bP^*X\seteq\dT X/\C^\times$  where $\dT X\seteq T^*X\setminus X$.
Since $\cE_X$ and $\hE_X$ are constant along the fibers of
$\dT X\to \bP^*X$, we regard $\cE_X$ and $\hE_X$
as rings on $\bP^*X$.

Let $\Omega$ be the open subset
$\set{(x;\xi)\in\bP^*\C^n}{\xi_n\not=0}$ of $\bP^*\C^n$,
and let $\rho\cl\Omega\to\C^{n-1}$ be the map defined by
$\rho(x;\xi)=(x_1,\ldots,x_{n-1})$.
Let $X'$ be an open subset of $\C^{n-1}$
and $\Omega'=\rho^{-1}X'$.
Let $\rho'\cl \Omega'\to X'$ denote the restriction of $\rho$.

Then $\rho'_*(\cE_{\C^n}\vert_{\Omega'})$ contains $\cK_{X'}$ by sending
$t$ to $\partial_n^{-1}$.
Similarly $\rho'_*(\cE_{\C^n}(0)\vert_{\Omega'})$ contains $\cA_{X'}$,
and similarly for the formal case.
Let us denote by $\DA_{X'}$ the subring of
$\rho'_*(\cE_{\C^n}(0)\vert_{\Omega'})$ generated by $\A_{X'}$,
the $t\partial_k$  for $k=1,\ldots,n-1$, and $x_n$.
Similarly we denote by $\DK_{X'}$ the subring of
$\rho'_*(\cE_{\C^n}\vert_{\Omega'})$ generated by $\cK_{X'}$,
the $\partial_k$ for $k=1,\ldots,n-1$, and $x_n$.
Then we have
\begin{equation}
\label{poly}
\begin{gathered}
\DA_{X'}\simeq\cA_{X'}\otimes_\C\C[t\partial_1,\ldots,t\partial_{n-1},x_n],\\
\DK_{X'}\simeq\cK_{X'}\otimes_{\cA_{X'}}\DA_{X'}
\simeq\cK_{X'}\otimes_\C\C[\partial_1,\ldots,\partial_{n-1},x_n].
\end{gathered}
\end{equation}
We define their formal analogues
$\hDA_{X'}$ and $\hDK_{X'}$ in the same fashion.

Let $\Mod_\gd(\DK_{X'})$ denote the abelian category of
coherent $\DK_{X'}$-modules $\cN$
such that there exists locally
a coherent $\DA_{X'}$-submodule $\cL$ of $\cN$
satisfying the two conditions:
\eqn
&&\cN\simeq(\DK_{X'})\otimes_{(\DA_{X'})}\cL,
\\
&&\text{$\cL$ is a coherent $\cA_{X'}$-module.}
\eneqn
Let us denote by $\Mod_{\rho-\gd}(\cE_{\C^n}\vert_{\Omega'})$
the category of coherent
$\cE_{\C^n}\vert_{\Omega'}$-modules $\cM$ such that
the support of $\cM$ is finite over $X'$.
We define their formal analogues
$\Mod_\gd(\hDK_{X'})$ and $\Mod_{\rho-\gd}(\hE_{\C^n}\vert_{\Omega'})$ similarly.
Note that, for dimension reasons, the modules $\cM$ are holonomic.

Below we state two propositions which are analogues of  classical theorems on finite morphisms in several complex variables. The first one concerns behavior of coherence under finite morphisms and the  second one
is an analogue of the duality theorem for finite morphisms for analytic coherent sheaves.
\Prop\label{prop:equivK}
The abelian categories $\Mod_{\rho-\gd}(\cE_{\C^n}\vert_{\Omega'})$ and $\Mod_{\rho-\gd}(\hE_{\C^n}\vert_{\Omega'})$ are
equivalent to  the abelian categories $\Mod_\gd(\DK_{X'})$ and $\Mod_\gd(\hDK_{X'})$, respectively, via
the functors
$\cM\longmapsto \rho'_*(\M)$ and $\hM\longmapsto \rho'_*(\hM)$, respectively. Their quasi-inverses are given by
$\cN\longmapsto (\cE_{\C^n}\vert_{\Omega'})\otimes_{\DK_{X'}}\rho'^{-1}(\cN)$ and $\hN\longmapsto (\hE_{\C^n}\vert_{\Omega'})\otimes_{\hDK_{X'}}\rho'^{-1}(\hN)$, respectively.
\enprop
and
\Prop\label{Serre duality variant}
For $\cM\in\Mod_{\rho-\gd}(\cE_{\C^n}\vert_{\Omega'})$ and for $\hM\in\Mod_{\rho-\gd}(\hE_{\C^n}\vert_{\Omega'})$ we have the duality isomorphisms
$${\rho'}_*\bl\cext^k_{\cE_{\C^n}}(\cM,\cE_{\C^n}\vert_{\Omega'})\br\simeq
\cext^{k-n}_{\cK_{X'}}({\rho'}_*(\cM),\cK_{X'})\,.
$$
$${\rho'}_*\bl\cext^k_{\hE_{\C^n}}(\hM,\hE_{\C^n}\vert_{\Omega'})\br\simeq
\cext^{k-n}_{\hcK_{X'}}({\rho'}_*(\hM),\hcK_{X'})\,.
$$
\enprop

As similar  statements are proved in  \cite[Chapter 3]{SKK} and the proofs proceed exactly in the same manner as in the classical case we just briefly indicate the outline of the arguments. We discuss only the microdifferential case as the argument in the formal case is the same.
The idea  is
to compose the projection $\rho$ into projections where the fiber is one dimensional and then proceed step by step.  Thus, we consider a projection\footnote{In an analogous manner we can consider projections of the type $\tau\cl \bC^{k} \times (\bP^{\ell+1}-\{\infty\}) \to  \bC^{k} \times \bP^{\ell}$.}
\begin{subequations}
\begin{equation}
\tau\cl \bC^{k+1} \times \bP^{\ell} \to  \bC^{k} \times \bP^{\ell}
\end{equation}
given by
\begin{equation}
\tau(x_1,\dots,x_{k+1};\xi_{n-\ell},\dots,\xi_n)\
 = \ (x_1,\dots,x_{k};\xi_{n-\ell},\dots,\xi_n)\,.
\end{equation}
\end{subequations}
We write $\cE_{ \bC^{k} \times \bP^{\ell}}$ for the sheaf of microdifferential operators on $\bC^{k} \times \bP^{\ell}$.
It is naturally a subsheaf of $\cE_{\bC^n}$
where the symbols just depend on the variables
$(x_1,\dots,x_{k};\xi_{n-\ell},\dots,\xi_n)$.
In this language $\cK_{\bC^n} = \cE_{ \bC^{n} \times \bP^{0}}$.
Then $\cE_{ \bC^{k} \times \bP^{\ell}}$ can be naturally identified with a subsheaf of
$\tau_*\cE_{ \bC^{k+1} \times \bP^{\ell}}$.
We also write $\cE_{\bC^{k} \times \bP^{\ell}}(0)[x_{k+1}]$
for the subsheaf of $\tau_*\cE_{ \bC^{k+1} \times \bP^{\ell}}$
generated by $\cE_{ \bC^{k} \times \bP^{\ell}}(0)$ and $x_{k+1}$.
Then Propositions~\ref{prop:equivK}
follow from:
\Lemma
Let\/ $V$ be an open subset of $\bC^{k}\times \bP^{\ell}$.
Write $\Mod_{\tau\text{-fin}}(\cE_{\bC^{k+1} \times \bP^{\ell}}(0)
\vert_{\tau^{-1}V})$
for the category of coherent
$\cE_{\bC^{k+1} \times \bP^{\ell}}(0)\vert_{\tau^{-1}V}$-modules
$\cN$ such that
$\Supp(\cN)\to V$ is a finite morphism,
and
write $\Mod_\coh(\cE_{\bC^{k} \times \bP^{\ell}}(0)[x_{k+1}]\vert_V)$
for the category of
$\cE_{\bC^{k} \times \bP^{\ell}}(0)[x_{k+1}]\vert_V$-modules $\cL$
that are coherent over $\cE_{\bC^{k} \times \bP^{\ell}}(0)\vert_V$.
Then the functor $\cN\mapsto \tau_*\cN$ gives an equivalence of
categories between
$\Mod_{\tau_\text{fin}}(\cE_{\bC^{k+1} \times \bP^{\ell}}(0)
\vert_{\tau^{-1}V})$ and
$\Mod_\coh(\cE_{\bC^{k} \times \bP^{\ell}}(0)[x_{k+1}]\vert_V)$.
\enlemma

As we stated before, this lemma is proved in the same manner as the statements for coherent analytic sheaves
making use of the Weierstrass preparation theorem and division theorems.
The Weierstrass preparation theorem and the division theorems are proved
in \cite[Chapter 2]{SKK}.  Proposition~\ref{Serre duality variant} is proved in the similar manner.

Furthermore, arguing as above, we have

\Prop
\label{coherence from finite morphisms}
\bnum
\item
Let $\cN$ be an $\cE_{\C^n}(0)\vert_{\Omega'}$-module
and assume that\linebreak $\rho'\vert_{\Supp(\cN)}\cl\Supp(\cN) \to X'$
is finite. Then $\cN$ is a coherent $\cE_{\C^n}(0)\vert_{\Omega'}$-module
if and only if $\rho'_*\cN$ is a coherent $\cA_{X'}$-module.
\item
Similarly let $\cM$ be an $\cE_{\C^n}\vert_{\Omega'}$-module
such that $\rho'\vert_{\Supp(\cM)}\cl\Supp(\cM)\to X'$
is finite.
\bna
\item
If $\cM$ is a coherent $\cE_{\C^n}\vert_{\Omega'}$-module
then $\rho'_*\cM$ is a coherent $\cK_{X'}$-module,
\item If $\rho'_*\cM$ is a coherent $\cK_{X'}$-module
and if $\rho'_*\cM$ has a coherent $\cA_{X'}$-lattice $\cL$
such that $\cL$ is an $\DA_{X'}$-submodule,
then $\cM$ is a coherent $\cE_{\C^n}\vert_{\Omega'}$-module.
\ee
\ee
\enprop
Of course we have a similar statement in the formal case.
 \begin{rmk}
We do not know if (ii) (b) holds without assuming the existence of a
lattice $\cL$. Having the stronger statement available would slightly simplify some of our arguments.
 \end{rmk}

We now analyze the coherent $\cK_{X'}$-module ${\rho'}_*(\cM)$.
We claim:
\begin{prop}
\label{vector bundle}
The sheaf ${\rho'}_*(\cM)$ is locally free over $\cK_{X'}$ and the sheaf  ${\rho'}_*(\hM)$ is locally free over $\hcK_{X'}$.
\end{prop}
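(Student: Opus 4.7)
The plan is to combine the Cohen--Macaulay property of holonomic modules with the duality isomorphism of Proposition~\ref{Serre duality variant}, and then apply standard commutative algebra to pass from vanishing $\cext$ to local freeness. The convergent and formal cases are essentially parallel, so I describe only the convergent one.

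By Proposition~\ref{coherence from finite morphisms}(ii)(a), $\cN:={\rho'}_*\cM$ is coherent over $\cK_{X'}$, and because $\cM$ possesses an $\cE_{\bC^n}(0)$-lattice, Proposition~\ref{coherence from finite morphisms}(i) provides locally a coherent $\cA_{X'}$-lattice $\cL\subset\cN$ with $\cN\simeq\cL[t^{-1}]$. It therefore suffices to show that $\cL$ is locally free over $\cA_{X'}$, since inverting $t$ will then give local freeness of $\cN$ over $\cK_{X'}$.

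Since $\cM$ is holonomic it is Cohen--Macaulay, so
$$\cext^k_{\cE_{\bC^n}}(\cM,\cE_{\bC^n}\vert_{\Omega'})=0\quad\text{for all $k\neq n$,}$$
and Proposition~\ref{Serre duality variant} translates this into
$$\cext^j_{\cK_{X'}}(\cN,\cK_{X'})=0\quad\text{for all $j\neq 0$.}$$
At each point $x'\in X'$, the stalk $\cA_{X',x'}$ is a regular Noetherian local ring with regular parameter system $t,x_1,\ldots,x_{n-1}$, of Krull dimension $\dim X'+1$. By the standard compatibility of Ext with localization (for finitely presented modules), the vanishing of $\cext^j_{\cK_{X'}}(\cN,\cK_{X'})$ says that $\cext^j_{\cA_{X'}}(\cL,\cA_{X'})$ is $t$-torsion for each $j>0$. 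Promoting this to honest vanishing---using the $t$-torsion-freeness of the lattice $\cL$ together with the local structure of $\cA_{X',x'}$---then forces, via the Auslander--Buchsbaum formula over the regular local ring $\cA_{X',x'}$, the projective dimension of $\cL_{x'}$ to be zero, so $\cL_{x'}$ is free. Coherence of $\cL$ upgrades stalk-freeness to local freeness in a neighborhood of $x'$.

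The main obstacle I expect is precisely the last promotion from ``$\cext^j_{\cA_{X'}}(\cL,\cA_{X'})$ is $t$-torsion'' to ``$\cext^j_{\cA_{X'}}(\cL,\cA_{X'})=0$''. In the formal version, the $t$-adic completeness of $\hcA_{X'}$ makes this step essentially automatic, so the formal case runs cleanly. In the convergent case, the argument requires more care, exploiting the specific structure of the lattice inherited from an $\cE_{\bC^n}(0)$-lattice on a holonomic module (rather than being a property of arbitrary coherent $\cA_{X'}$-modules).
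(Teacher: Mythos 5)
Your opening is the same as the paper's: Cohen--Macaulayness of the holonomic module plus the duality of Proposition~\ref{Serre duality variant} gives $\cext^j_{\cK_{X'}}({\rho'}_*\cM,\cK_{X'})=0$ for $j\neq 0$. But from there your route has a genuine gap, and you have located the difficulty in the wrong place. Your plan is to descend to a lattice $\cL$ and prove that $\cL$ itself is locally free over $\cA_{X'}$ by promoting ``$\cext^j_{\cA_{X'}}(\cL,\cA_{X'})$ is $t$-torsion for $j>0$'' to honest vanishing. That promotion is not just delicate; it is false in general, because a coherent $\cA_{X'}$-lattice of a locally free $\cK_{X'}$-module need not be locally free. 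For example, with $X'=\bC$ the ideal $\cL=(x,t)\subset\cA_{X'}$ is a lattice of the free module $\cK_{X'}$, yet $\cL_0$ is the maximal ideal of the two-dimensional regular local ring $\cA_{X',0}$ and hence not free; its higher $\cext$ against $\cA_{X'}$ is $t$-torsion but nonzero. The same example lives in $\hcA_{X'}$, so $t$-adic completeness does not rescue the formal case either. The lattices supplied by Proposition~\ref{coherence from finite morphisms} are merely coherent, and nothing in the hypotheses singles out a locally free one (indeed, in Section~\ref{convergent} the authors must work to manufacture a locally free lattice via Lemma~\ref{lattice comparison} and the condition that $\cL/\cL(-1)$ be free).

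The actual content of the proposition lies elsewhere. Taking germs, the $\cext$-vanishing shows that $({\rho'}_*\cM)_{x'}$ is a finitely generated module of projective dimension zero over $\cK_{X',x'}=\cA_{X',x'}[t^{-1}]$, a Noetherian ring of finite global dimension --- hence \emph{projective}. But $\cK_{X',x'}$ is not local ($t$ has been inverted out of the maximal ideal), so projective does not trivially imply free. The passage from projective to free over $R[t^{-1}]$, for $R$ a regular local ring containing a field and $t\in\fm\setminus\fm^2$, is precisely Quillen's conjecture, proved by Popescu and by Bhatwadekar--Rao; this is the key external input the paper invokes and your proposal omits entirely. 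Without it (or a substitute), the argument stops at ``locally projective,'' which is not enough to conclude local freeness of the sheaf ${\rho'}_*\cM$.
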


In other words ${\rho'}_*(\cM)$ and  ${\rho'}_*(\hM)$ are holomorphic vector bundles of finite rank over
the fields $K$ and $\hK$, respectively.

\begin{proof}
We give the proof in the convergent case, in the formal case the proof is exactly the same.
To argue this, we first recall that holonomic modules are Cohen-Macaulay,
i.e., we have
\begin{equation}
\cext^k_{\cE_X}(\cM,\cE_X) = 0 \qquad \text{unless} \ \ k=\dim X\,.
\end{equation}

{}From Proposition~\ref{Serre duality variant}
 we conclude
\begin{equation}\label{eq:van}
\cext^k_{\cK_{X'}}({\rho'}_*(\cM),\cK_{X}) = 0
\quad\text{unless $k=0$.}
\end{equation}
Taking the germs at $x'\in X'$, we obtain
\begin{equation}\label{eq:vangerm}
\Ext^k_{\cK_{X',x'}}\bl({\rho'}_*(\cM))_{x'},\cK_{X',x'}\br = 0
\quad\text{unless $k=0$.}
\end{equation}
Let us now consider the ring $\cK_{X',x'}$. Let us first note that
\begin{subequations}
\begin{equation}
\text{the ring $\cA_{X',x'}=  A \hat\otimes_\bC \cO_{X',x'}$
is a commutative regular local ring}
\end{equation}
and that
\begin{equation}
\cK_{X',x'}= \cA_{X',x'}[t^{-1}].
\end{equation}
\end{subequations}
In particular $\cK_{X',x'}$ is a Noetherian ring with
 finite global dimension.
This along with \eqref{eq:vangerm} implies that
\begin{equation}
\text{$({\rho'}_*(\cM))_{x'}$ is a finitely generated projective
$\cK_{X',x'}$-module.}
\end{equation}

We now make use of the following theorem
of Popescu, Bhatwadekar, and Rao \cite{P,BR}; for a nice discussion, see also \cite{S}. They show:
\begin{equation}
\parbox{70ex}{Let $R$ be a regular local ring containing a field
with maximal ideal $\fm$ and $t\in\fm\setminus\fm^2$.
Then every finitely generated projective module
over the localized ring $R_t=R[t^{-1}]$ is free.}
\end{equation}
This result is related to Serre's conjecture and was conjectured by Quillen in \cite{Q}.

Hence $({\rho'}_*(\cM))_{x'}$ is a free $\cK_{X',x'}$-module
for any $x'\in X$, and we thus finally conclude
\begin{equation}\label{eq:free}
\text{${\rho'}_*(\cM)$ is a locally free $\cK_{X'}$-module
of finite rank.}
\end{equation}

\end{proof}
\bigskip

 \section{Proof of the main theorems}
  \label{The structure of the proof}

In this section we prove our main extension theorems. We first prove the formal versions Theorems~\ref{3} and
~\ref{4} by reducing them to their formal commutative analogues Theorems~\ref{5} and ~\ref{9} by methods of section~\ref{The reduction}.
We then prove Theorems~\ref{1} and ~\ref{2}
making use of the formal Theorems ~\ref{3} and ~\ref{4}
we just proved and the comparison Theorem~\ref{8}. The proofs of Theorems~\ref{5}, \ref{9}, and~\ref{8} are postponed and their proofs are given in their own sections~\ref{The formal case}, \ref{The formal submodule theorem}, and \ref{comparison}, respectively.

\subsection{General preliminaries} \label{General}In this subsection we make some preliminary constructions which will be used in all of the arguments.

Let us recall the setup common to all of the microlocal extension theorems. We consider an open subset $U$ of $T^*X$, $\Lambda$ a closed Lagrangian
analytic subset of $U$, and $Y$ a closed analytic subset of $\Lambda$.  As we remarked earlier, all the extension theorems are local in nature. Thus, as far as the microlocal extension theorems are concerned, we can assume that we work in the vicinity of a point $p\in Y$. Furthermore, working inductively, we can assume that the point $p$ is a smooth point in $Y$. In addition making use of a quantized contact transformation and the generic position lemma,  \cite[Corollary 1.6.4]{KK}, we can assume that the characteristic variety $\La$ is in generic position at $p$.

We will make use of the results of section~\ref{The reduction}.
In that section we worked in the projectivized setting. We consider the projective cotangent bundle
$\bP^*X\seteq\dT X/\C^\times$ where $\dT X\seteq T^*X\setminus X$.
Since $\cE_X$ and $\hE_X$ are constant along the fibers of
$\dT X\to \bP^*X$, we regard $\cE_X$ and $\hE_X$
as rings on $\bP^*X$.
 We will now also regard the Lagrangian $\La$ as a
locally closed subvariety of $\bP^*X$. We also continue to denote by $Y$ the projectivization of the original $Y$ in $\ct$ and similarly for the open set $U$.

 As $\La$
in generic position at $p$ we can make the following choice of local coordinates.
In the neighborhood of $\pi_X(p)$ we choose
a local coordinate system $(x_1,\ldots, x_n)$
such that $\pi_X(p)$ corresponds to the origin
and the point $p$ corresponds to $dx_n$ at the origin.
Thus we may assume that $X$ is an open subset of $\C^n$.
By shrinking $U$ if necessary,
we can assume that $U$ is contained in
$\Omega\seteq\set{(x;\xi)\in \bP^*\bC^n}{\xi_n\not=0}$
and the restriction
$\rho\vert_U\cl U \to \bC^{n-1}$ of $\rho\cl\Omega\to\C^{n-1}$
has the following properties:
\begin{subequations}
\begin{equation}
\text{$X'\seteq\rho(U)\subset\bC^{n-1}$
is an open neighborhood of $\rho(p)\in\bC^{n-1}$,}
\end{equation}
\begin{equation}
\text{$\rho\vert_{\La}\cl \La \to X'\subset\bC^{n-1}$ is finite,}
\end{equation}
\begin{equation}
\text{$\rho|_{Y} \cl Y \to Y'\seteq\rho(Y)$
is an isomorphism.}
\end{equation}
\end{subequations}
In particular, $Y'$ is then a smooth submanifold of $X'$.
of the same codimension as $Y$ in $\Lambda$.
By abuse of notation we will now simply write $\rho$ for $\rho|_U$.
By replacing $U$ with $\rho^{-1}(X')$
we may assume that
$U=\rho^{-1}(X')$.
We also write $\dU=U\setminus  \rho^{-1}Y'$ and
$$\drho\cl \dU\to X'\setminus Y'$$
for the restriction of $\rho$ to $\dU$.

We recall that we have written $j\cl U\setminus Y \hookrightarrow U$ for the inclusion and
we will write $j'\cl X'\setminus Y'\hookrightarrow X'$ for the other inclusion. We summarize the situation in the following commutative Cartesian diagram
\begin{equation}
\begin{gathered}
\xymatrix{
 {\dU}\ar[d]_\drho \ar@{^{(}->}[r]^j \ar@{}[dr]|{\square}&
{U}
\ar[d]^\rho\\
 {X'\setminus Y'} \ar@{^{(}->}[r]^{\ \ \ j'}
& {X'}\,.
}
\end{gathered}
\end{equation}

\subsection{The formal codimension three extension theorem}\label{Formal}

In this subsection we prove Theorem~\ref{3}
 by reducing it to its commutative version Theorem~\ref{5}
whose proof is given later in section \ref{The formal case}.

We work in the geometric setting of subsection~\ref{General}
with $U$ an open subset of $T^*X$, $\Lambda$ a closed Lagrangian
analytic subset of $U$, and $Y$ a closed analytic subset of $\Lambda$
of codimension  at least three. We are given a holonomic  $\bl\hE_X\vert_{U\setminus Y}\br$-module $\hM$
whose support is contained in $\Lambda\setminus Y$ and an
 $\bl\hE_X(0)\vert_{U\setminus Y}\br$-lattice $\hN$ of $\hM$. We write $j\cl U\setminus Y\imbed U$ for the open inclusion and
we are to show that $j_*\hM$ is a coherent $(\hE_X|_U)$-module.

By replacing the lattice $ \hN$ with the lattice
$$\cext_{\hE_X(0)\vert_{U\setminus Y}}^n
\bl\cext_{\hE_X(0)\vert_{U\setminus Y}}^n(\hN,\hE_X(0)\vert_{U\setminus Y}),
\hE_X(0)\vert_{U\setminus Y}\br\,,
$$
we may assume from the beginning that
\eq
&&\hN\simeq \cext_{\hE_X(0)\vert_{U\setminus Y}}^n
\bl\cext_{\hE_X(0)\vert_{U\setminus Y}}^n(\hN,\hE_X(0)\vert_{U\setminus Y}),
\hE_X(0)\vert_{U\setminus Y}\br.\label{eq:hNref}
\eneq
 As the question is local, we can proceed inductively along $Y$ and so we can assume that we work in a neighborhood of a smooth point $p\in Y$.
Furthermore, we shrink the open set $U$
as in the  subsection~\ref{General} above.
We now make use of  Propositions~\ref{coherence from finite morphisms}
and  ~\ref{vector bundle}
to conclude that $\drho_*(\hM\vert_{\dU})$ is a
locally free $\hcK_{X'\setminus Y'}$-module of finite rank,
and $\drho_*(\hN\vert_{\dU})$  is a
coherent $\hcA_{X'\setminus Y'}$-lattice of $\drho_*(\hM\vert_{\dU})$.

Then \eqref{eq:hNref} implies that $\drho_*(\hN\vert_{\dU})$ is
a reflexive coherent $\cA_{X'\setminus Y'}$-module
by Proposition \ref{Serre duality variant} and Proposition~\ref
{coherence from finite morphisms}.
Hence, we can apply Theorem~\ref{5} and conclude
that $j'_*\drho_*(\hN\vert_{\dU})$ is  a coherent $\hcA_{X'}$-module.
We now note that
\begin{equation}
\rho_* j_*\hM \ \cong j'_*\drho_*(\hM\vert_{\dU}) \qquad \text{and} \qquad \rho_*j_*\hN \ \cong j'_*\drho_*(\hN\vert_{\dU})\,.
\end{equation}
 Now we again apply Proposition~\ref{coherence from finite morphisms},
and as  $j'_*\drho_*(\hN\vert_{\dU})$ is a coherent $\hcA_{X'}$-module,
we conclude that $j_*\hN$ is a coherent $\hE_X(0)|_U$-module and $j_*\hM$ is a coherent $\hE_X|_U$-module.

\subsection{The formal codimension two submodule extension theorem}
\label{formal submodule theorem proof}

In this subsection we prove Theorem~\ref{4}
 by reducing it to its commutative version Theorem~\ref{9}
whose proof is given later in section
\ref{The formal submodule theorem}.

It is more convenient for us to work with the equivalent quotient version of the theorem:
 \begin{thm}
\label{formal quotient module theorem}
Let $U$ be an open subset of $T^*X$,
$Y$ an analytic subset of $U$ of codimension two or more,
and $j\cl U\setminus Y \to Y$ the inclusion.
Let $\hM$
be a holonomic  $\hE_X\vert_U$-module.
Let us assume that we are given a holonomic $\hE_X\vert_{U\setminus Y}$-module
$\hM_2$ which is a quotient of  $j^{-1}\hM$.
Then $\on{Im}(\hM\to j_*\hM_2)$
is a coherent  $(\hE_X\vert_U)$-module on $U$.
 \end{thm}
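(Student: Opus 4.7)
\emph{Proof plan.} The assertion is local on $U$, so by noetherian induction on the dimension of the singular locus of $Y$ I would first reduce to the case where $p\in Y$ is a smooth point and $Y$ is locally a smooth closed submanifold of $\La=\Supp(\hM)$ of codimension at least two in $\La$. Using a quantized contact transformation together with the generic position lemma \cite[Corollary~1.6.4]{KK}, I would then place $\La$ in generic position at $p$ and apply the geometric setup of Subsection~\ref{General}: in suitable local coordinates, after shrinking $U$, this yields the Cartesian square involving a projection $\rho\cl U\to X'$ that is finite on $\La$ and such that $\rho|_{Y}\cl Y\iso Y'$ identifies $Y$ with a smooth closed submanifold $Y'\subset X'$. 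A dimension count (with $\dim\La = \dim X' = n-1$) shows that $Y'$ has codimension at least two in $X'$, which is exactly the hypothesis required by Theorem~\ref{9}.

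\emph{Transfer to the commutative side.} The next step is to push everything forward under $\rho$. By the formal analogue of Proposition~\ref{prop:equivK} together with Proposition~\ref{vector bundle}, $\rho_*\hM$ will be a coherent $\hDK_{X'}$-module which is locally free of finite rank over $\hcK_{X'}$; analogously, $\drho_*\hM_2$ will be locally free over $\hcK_{X'\setminus Y'}$. By the definition of $\Mod_\gd(\hDK_{X'})$, one may choose locally a coherent $\hDA_{X'}$-submodule $\hL\subset\rho_*\hM$ which is simultaneously a coherent $\hcA_{X'}$-lattice. Since $\drho_*$ is exact on the relevant categories, the pushforward of the given surjection $j^{-1}\hM\epito\hM_2$ restricts over $X'\setminus Y'$ to an epimorphism $\psi\cl(\rho_*\hM)|_{X'\setminus Y'}\epito\drho_*\hM_2$ of $\hcK_{X'\setminus Y'}$-modules. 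I would then set $\hL_2:=\psi(\hL|_{X'\setminus Y'})$, a coherent $\hcA_{X'\setminus Y'}$-submodule of the locally free $\hcK_{X'\setminus Y'}$-module $\drho_*\hM_2$; since $\hcA_{X'\setminus Y'}\imbed\hcK_{X'\setminus Y'}$, such a submodule is automatically torsion-free over $\hcA_{X'\setminus Y'}$, and the restriction of $\psi$ to $\hL$ gives an epimorphism $\hL|_{X'\setminus Y'}\epito\hL_2$.

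\emph{Applying Theorem~\ref{9} and descending back to $\hE_X$.} The data $(X',Y',\hL,\hL_2)$ together with the epimorphism $\hL|_{X'\setminus Y'}\epito\hL_2$ now fit the hypotheses of Theorem~\ref{9}, which yields that $\widehat{\cI}:=\on{Im}(\hL\to j'_*\hL_2)$ is a coherent $\hcA_{X'}$-module. Because $\hL$ is $\hDA_{X'}$-stable, $\hL_2$ is $\hDA_{X'\setminus Y'}$-stable, and $\psi$ is compatible with these actions, $\widehat{\cI}$ will inherit a $\hDA_{X'}$-stable structure. Inverting $t$ then identifies $\hcK_{X'}\otimes_{\hcA_{X'}}\widehat{\cI}$ with $\on{Im}(\rho_*\hM\to j'_*\drho_*\hM_2)=\rho_*\on{Im}(\hM\to j_*\hM_2)$, exhibiting $\widehat{\cI}$ as a coherent $\hDA_{X'}$-lattice of the latter. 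The formal analogue of Proposition~\ref{coherence from finite morphisms}(ii)(b) then gives coherence of $\on{Im}(\hM\to j_*\hM_2)$ over $\hE_X|_U$, as required. The point that will require the most care is the verification that $\widehat{\cI}$ is genuinely $\hDA_{X'}$-stable and that inverting $t$ commutes with taking the image, since Theorem~\ref{9} is purely commutative and it is precisely this extra lattice structure that allows the conclusion to descend through $\rho$ back to the microdifferential setting.
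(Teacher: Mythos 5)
Your proposal is correct and follows essentially the same route as the paper: localize to a smooth point of $Y$, put $\La$ in generic position, push forward by the finite projection $\rho$, take the image of a lattice under the induced epimorphism to get a torsion-free coherent $\hcA_{X'\setminus Y'}$-module, apply Theorem~\ref{9}, and descend via Proposition~\ref{coherence from finite morphisms}. The only (harmless) variations are that the paper chooses the lattice upstairs as an $\hE_X(0)\vert_U$-lattice $\hN$ of $\hM$ and then pushes it forward, and it invokes part (i) of Proposition~\ref{coherence from finite morphisms} applied to $\on{Im}(\hN\to j_*\hM_2)$ rather than part (ii)(b), which sidesteps the $\hDA_{X'}$-stability verification you flag as delicate.
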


We work in the geometric setting of subsection~\ref{General}. Thus, we are working in a neighborhood of smooth point $p\in Y$ where $Y$ is of codimension at least two. Furthermore, we shrink the open set $U$ as in the subsection~\ref{General}. We choose an $\hE_X(0)\vert_U$-lattice $\hN$ of $\hM$. To be able to make this choice we might have to shrink $U$ further.

 We now make use of Propositions~\ref{vector bundle} to conclude that
$\rho_*(\hM)$ is a locally free $\hcK_{X'}$-module of finite rank.
Furthermore, $\rho_*(\hN)$ is  a coherent $\hcA_{X'}$-lattice of
$\rho_*(\hM)$. We also conclude that
$\drho_*(\hM_2\vert_{\dU})$   is a locally free
$\hcK_{X'\setminus Y'}$-module of finite rank.

Because $\hM_2$  is a quotient of  $j^{-1}\hM$,
we have a morphism of $\rho_*(\hM)\vert_{X'\setminus Y'} \to \drho_*(\hM_2\vert_{\dU})$ and we let $\hL$ be the image of $\drho_*(\hN)\vert_{\dU}$ under this morphism.
As $\hL$ lies in the locally free $\hcK_{X'\setminus Y'}$- module  $\drho_*(\hM_2\vert_{\dU})$   it is torsion free. Now can apply Theorem~\ref{9} and we conclude that
\begin{equation}
\text{$\on{Im}(\rho_*(\hN) \to j'_*\hL)\simeq\rho_*(\on{Im} \hN\to j_*\cM_2)$
is a coherent $\hcA_{X'}$-module.}
\end{equation}
Now we can again apply Proposition~\ref{coherence from finite morphisms},
and conclude that $\on{Im}(\hN\to j_*\hM_2)$
is a coherent  $\hE_X(0)\vert_U$-module.
Therefore $\on{Im}(\hM\to j_*\hM_2)$
is a coherent  $(\hE_X\vert_U)$-module.

\subsection{The codimension three extension theorem}
\label{convergent}

In this subsection we prove Theorem~\ref{1}
by deducing it from the formal version Theorem~\ref{3} which was proved in subsection~\ref{Formal} and from the comparison  Theorem~\ref{8}
whose proof is given later in section \ref{comparison}.

We work in the geometric setting of subsection~\ref{General}
with $U$ an open subset of $T^*X$, $\Lambda$ a closed Lagrangian
analytic subset of $U$, and $Y$ a closed analytic subset of $\Lambda$
of codimension  at least three. We are given a holonomic  $\bl\cE_X\vert_{U\setminus Y}\br$-module $\cM$
whose support is contained in $\Lambda\setminus Y$ and an
 $\bl\cE_X(0)\vert_{U\setminus Y}\br$-lattice $\cN$ of $\cM$. We write $j\cl U\setminus Y\to U$ for the open inclusion and we are to show that $j_*\cM$ is a coherent $(\cE_X|_U)$-module.

By replacing $ \cN$ with
$\cext_{\cE_X(0)\vert_{U\setminus Y}}^n
\bl\cext_{\cE_X(0)\vert_{U\setminus Y}}^n(\cN,\cE_X(0)\vert_{U\setminus Y}),
\cE_X(0)\vert_{U\setminus Y}\br$, we may assume from the beginning that
\eq
&&\cN\simeq \cext_{\cE_X(0)\vert_{U\setminus Y}}^n
\bl\cext_{\cE_X(0)\vert_{U\setminus Y}}^n(\cN,\cE_X(0)\vert_{U\setminus Y}),
\cE_X(0)\vert_{U\setminus Y}\br.\label{eq:Nref}
\eneq

  We first make use of Propositions~\ref{coherence from finite morphisms}
 and  ~\ref{vector bundle} to conclude that $\drho_*(\cM\vert_{\dU})$ is  a locally free $\cK_{X'\setminus Y'}$-module
of finite rank and $\drho_*(\cN\vert_{\dU})$
is a coherent $\cA_{X'\setminus Y'}$-lattice of $\drho_*(\cM\vert_{\dU})$.

We pass to the formal setting $\hM \seteq (\hE_X\vert_{U\setminus Y})\otimes _{\cE_X\vert_{U\setminus Y}}\cM$ and $\hN \seteq (\hE_X\vert_{U\setminus Y})\otimes _{\cE_X\vert_{U\setminus Y}}\cN$.
We now apply Theorem~\ref{3} which was proved in subsection~\ref{Formal}  to conclude that
$j_*\hM$ is a coherent $(\hE_X|_U)$-module. We again make use of
Proposition~\ref{vector bundle} to conclude that $\rho_*(j_*\hM)$ is a
locally free $\hcK_{X'}$-module of finite rank.
As we work locally near a point $y\in Y$, we may then assume that
$j'_*\drho_*\hM\simeq\rho_*j_*\hM$ is a free $\hcK_{X'}$-module.
Hence $j'_*\drho_*\hM$ has a free $\hcA_{X'}$-lattice
$\hL$.

Now we will employ the following comparison lemma between convergent lattices and formal lattices.
\Lemma
\label{lattice comparison}
Let $Z$ be a complex manifold and let $\cM$ be a coherent $\cK_Z$-module,
and $\hM\seteq\hK_Z\otimes_{\cK_Z}\cM$.
Then the set $\bbL(\cM)$ of $\cA_Z$-lattices of $\cM$
and  the set $\bbL(\hM)$ of $\hcA_Z$-lattices of $\hM$
are in one to one  correspondence:  the lattices $\cN\in\bbL(\cM)$
and  $\hN\in\bbL(\hM)$ correspond to each other via
$$\hN=\hcA_Z\otimes_{\cA_Z}\cN \qquad
\cN=\cM\cap\hN\,.
$$
Moreover we have an isomorphism
$\cN/\cN(-1)\isoto\hN/\hN(-1)$.
\enlemma
As the proof is by a routine argument we omit it.

\medskip
Thus, we have the following Cartesian square:
\begin{equation}
\begin{gathered}
\xymatrix{
 {\drho_*\cM}\ar@{_{(}->}[d] \ar@{}[dr]|{\square}&
{\strut\;\drho_*\cM\cap  {j'}^{-1}\hL}
\ar@{_{(}->}[l]\ar@{_{(}->}[d]\\
 {{j'}^{-1}\rho_*\hM }
& {{j'}^{-1}\hL}\ar@{_{(}->}[l]\,.
}
\end{gathered}
\end{equation}
Therefore $\cL\seteq\drho_*\cM\cap\hL$ is an $\cA_{X'\setminus Y'}$-lattice of
$\drho_*\cM$ by the lemma above.
Since $\cL/\cL(-1)\simeq\hL/\hL(-1)$ is a free $\cO_{X'\setminus Y'}$-module,
$\cL$ is a locally free $\cA_{X'\setminus Y'}$-module.

We now apply Theorem~\ref{8} to the lattice $\cL$ and we then conclude that
$j'_*\cL$ is a locally free $\cA_{X'}$-module.

On the other hand, by shrinking $X'$ if necessary,
there exist integers $p, q$ such that
$\cL\subset t^p\drho_*\cN\subset t^q\cL$.
Hence for any $s\in j'_*\cL$,
$\DA_{X'}s\subset t^pj'_*\drho_*\cN\subset j'_*t^q\cL$.
Hence $\DA_{X'}j'_*\cL\subset j'_*t^q\cL$.
Since $\DA_{X'}j'_*\cL$ is a sum of coherent $\cA_{X'}$-submodules
of the coherent $\cA_{X'}$-module $j'_*t^q\cL$,
we conclude that
$\DA_{X'}j'_*\cL$ is a coherent $\cA_{X'}$-module.
Now we make use of Proposition~\ref{coherence from finite morphisms} and conclude that $j_*\cM$ is a coherent $\cE_X\vert_U$-module.

\subsection{The codimension two submodule extension theorem}

In this subsection we prove Theorem~\ref{2}
by deducing it from the formal version Theorem~\ref{4} which was proved in subsection~\ref{formal submodule theorem proof} and making use of  the comparison  Theorem~\ref{8}
whose proof is given later in section \ref{comparison}.

We proceed as in subsection~\ref{convergent} this time making use of the fact that we have already proved the formal version Theorem~\ref{4}.
We write $\hM \seteq \hE_X\otimes _{\cE_X}\cM$ and $\hM_1 \seteq \hE_X\otimes _{\cE_X}\cM_1$. By  Theorem~\ref{4} we conclude that $\hM_1$ extends uniquely to a holonomic $\hE_X$-module $j_*\hM_1$ on $U$. As in the previous section we argue that  $\rho_*j_*\hM_1$ is a locally free $\hcK_{X'}$-module.
As we work locally near a point $y\in Y$,
we may then assume that
$j'_*\drho_*\hM_1\simeq\rho_*j_*\hM_1$ is a free  $\hcK_{X'}$-module.
Hence $j'_*\drho_*\hM_1$ has a free $\hcA_{X'}$-lattice $\hL$.

 Making use of Lemma~\ref{lattice comparison} again,
we conclude as above that  $\cL\seteq \drho_*\cM_1\cap {j'}^{-1}\hL$  is an $\cA_{X'\setminus Y'}$-lattice of  $\drho_*\cM_1$.
Since $\cL/\cL(-1)\simeq{j'}^{-1}\bl\hL/\hL(-1)\br$ is a free $\cO_{X'\setminus Y'}$-module,
$\cL$ is a locally free $\cA_{X'\setminus Y'}$-module. As in the previous section we apply Theorem~\ref{8} to the lattice $\cL$ and we then conclude that
$j'_*\cL$ is a locally free $\cA_{X'}$-module.
\ Again as in the previous section,
$\DA_{X'}j'_*\cL$ is coherent over $\cA_{X'}$.
Now we make use of Proposition~\ref{coherence from finite morphisms} and conclude that $j_*\cM_1$ is coherent.

\section{The commutative formal codimension three extension theorem}
\label{The formal case}

This section is devoted to the proof of Theorem~\ref{5}. Let us recall
our setup.
We consider a complex manifold $X$  and a subvariety $Y$ of $X$
such that the codimension of $Y$ in $X$ is at least 3.
 We write $j\cl X\setminus Y \to X$ for the open inclusion.
We are given  a reflexive coherent $\hcA_{X\setminus Y}$ module $\hN$ on
$X\setminus Y$.
We will show that $j_*\hN$ is a coherent $\hcA_X$-module.
As the question is local, proceeding inductively along $Y$, we can and we will assume that $Y$ is a smooth submanifold
without loss of generality.

Recall that we write $\odh{\hN}$ for the dual of $\hN$, i.e.,
\begin{equation}
\odh{\hN} \ =  \ \chom_{\str_{X\setminus Y}}(\hN,\str_{X\setminus Y})\,.
\end{equation}
Since $\hN$ is reflexive, the homomorphism
$\hN\to\odh{\odh{\hN}}$ is an isomorphism.
We shall show that
$j_*\hN$ is a coherent $\str_X$-module.

The idea of the proof is as follows. As $\hN$ is a coherent
$\str_{X\setminus Y}$-module, we have
\begin{equation}
\hN\isoto\varprojlim\; \hN/t^k\hN
\end{equation}
where the $\hN/t^k\hN$ are coherent
$\cO_{X\setminus Y}\otimes(\C[t]/t^k\C[t])$-modules
and of course they are also coherent as $\cO_{X\setminus Y}$-modules
(see \cite{KS3}).
We  write $\hN_k= \hN/t^{k}\hN$ and by convention we set $\hN_k=0$ for $k<0$.
However, there is no reason to expect that the $\hN_k$ are reflexive
as $\cO_{X\setminus Y}$-modules.
We can remedy this situation by replacing them with $(\hN_k)^{**}$;
here the dual is taken in the sense of $\cO_{X\setminus Y}$-modules,
i.e., $\hN_k^*=\chom_{\cO_{X\setminus Y}}(\hN_k, \cO_{X\setminus Y})$. As the sheaves $(\hN_k)^{**}$ are reflexive,
we can make use of the classical extension Theorem~\ref{6}  and so we know that the $j_*(\hN_k)^{**}$ are coherent
$\cO_X$-modules. We will then show that
\begin{subequations}
\begin{equation}
\label{minor point}
\hN\isoto\varprojlim\; (\hN_k)^{**}
\end{equation}
and
\begin{equation}
\label{main point}
\text{$\varprojlim j_*(\hN_k)^{**}$ is a coherent $\str_X$-module.}
\end{equation}
\end{subequations}

The key, of course, is to show \eqref{main point}.

In what follows we will be making use of the following well-known characterization of torsion free and reflexive sheaves.
\Lemma
\label{reflexive and torsion free}
Let $Z$ be a smooth complex manifold.
\bnum\item
Let $\cF$ be a coherent $\cO_Z$-module.
Then we have
\be[{\rm(a)}]
\item
$\cF$ is torsion free
if and only if $\codim_Z(\Supp\;\cext^i_{\cO_Z}(\cF,\cO_Z))\geq i+1$ for any $i>0$.
\item
$\cF$ is reflexive
if and only if $\codim_Z(\Supp\;\cext^i_{\cO_Z}(\cF,\cO_Z))\geq i+2$
for any $i>0$.
\ee
\item
Let $\hF$ be a coherent $\hcA_Z$-module.
Then we have
\be[{\rm(a)}]
\item
$\hF$ is torsion free \ro i.e., $\hF\to\odh\odh(\cF)$
is a monomorphism\rf\
if and only if
$\codim_Z\bl\Supp\;\cext^i_{\hcA_Z}(\hF,\hcA_Z)\br\geq i$ and
$\codim_Z\bl\Supp\;\cext^i_{\hcA_Z}(\hF,\hcK_Z)\br\geq i+1$ for any $i>0$.
\item
$\hF$ is reflexive
if and only if
$\codim_Z\bl\Supp\;\cext^i_{\hcA_Z}(\hF,\hcA_Z)\br\geq i+1$ and
$\codim_Z\bl\Supp\;\cext^i_{\hcA_Z}(\hF,\hcK_Z)\br\geq i+2$ for any $i>0$.
\label{reflexive characterization}
\ee
\ee
\enlemma

We comment briefly on the proof. The criterion (i) is well-known and can be found in \cite[Chapter 1]{ST}, for example. As for (ii), it is a statement on the level of local rings and the dimension of the local ring $\hcA_{Z,x}$ is $\dim(Z) +1 $ (we have added one formal dimension to $Z$). Thus, we conclude (ii).

Note that
we have
\eq
&&\codim_{\mathrm{Spec}(\hcA_Z)}\bl\Supp\;\cext^i_{\hcA_Z}(\hF,\hcA_Z)\br\\
&&\hs{5ex}=\sup\bl 1+\codim_Z\Supp\;\cext^i_{\hcA_Z}(\hF,\hcA_Z),
\codim_Z\Supp\;\cext^i_{\hcA_Z}(\hF,\hcK_Z)\br.\nonumber
\eneq

We also make use of the following:

\Lemma\label{residue}
Let $Z$ be a smooth complex manifold,
and let $\hF$ be a coherent $\hcA_Z$-module
which is $t$-torsion.
Then we have
$$\cext^1_{\str_{Z}}(\hF, \str_{Z})
\simeq\chom_{\cO_{Z}}(\hF, \cO_{Z}) = (\hF)^*\,.$$
\enlemma
\Proof
As the sheaf $\hF$ is $t$-torsion and
$t$ acts bijectively on $\hcK_X$, we conclude that
\begin{equation*}
\cext^k_{\hcA_X}(\hF,\hcK_X) \ = \ 0 \quad \text{for all $k$.}
\end{equation*}
Making use of the long exact sequence associated to the exact sequence
\begin{equation*}
0 \to \hcA_{Z} \to \hcK_{Z} \to \hcK_{Z}/\hcA_{Z} \to 0\,,
\end{equation*}
we conclude that
\begin{equation}
\cext^1_{\hcA_{Z}}(\hF,\hcA_{Z}) \simeq  \chom_{\hcA_{Z}}(\hF, \hcK_{Z}/\hcA_{Z})\,.
\end{equation}
Now we have
\begin{equation}
 \hcK_{Z}/\hcA_{Z}  \simeq \cO_{Z}t^{-1} \oplus  \cO_{Z}t^{-2} \oplus \cdots\
\quad\text{as an $\cO_Z$-module.}
\end{equation}
Thus, an element $f\in \chom_{\hcA_{Z}}(\hF, \hcK_{Z}/\hcA_{Z})$
consists of $f_{k} \in  \chom_{\cO_{Z}}(\hF, \cO_{Z}t^{-k})$ ($k\ge1$) such that
$f_{k}\circ t = t \circ f_{k+1}$.
By mapping $f\mapsto f_{1}$ we get an isomorphism
\begin{equation*}
 \chom_{\hcA_{Z}}(\hF, \hcK_{Z}/\hcA_{Z}) \simeq
\chom_{\cO_{Z}}(\hF, \cO_{Z}) \,.
\end{equation*}
This gives us the desired result.
\QED

We will now begin the proof of \eqref{minor point} and \eqref{main point}. To that end
we consider the exact sequence
\begin{equation*}
0 \To \hN \To[{\ t^k\ }]\hN \To \hN_k \to 0\,.
\end{equation*}
Dualizing it we obtain:
\begin{multline*}
0 \to \odh{\hN} \xrightarrow {t^k}\odh{\hN}
\to \cext^1_{\str_{X\setminus Y}}( \hN_k,\hcA_{X\setminus Y}) \\
\to\cext^1_{\str_{X\setminus Y}}( \hN,\hcA_{X\setminus Y})
  \xrightarrow {t^k}\cext^1_{\str_{X\setminus Y}}( \hN,\hcA_{X\setminus Y}).
\end{multline*}

By making use of Lemma~\ref{residue} we can rewrite the exact sequence as follows:
\begin{multline}
\label{auxiliary exact sequence}
0 \to (\odh{\hN})_k \to (\hN_k)^*
\\
\to  \Ker\bl\cext^1_{\str_{X\setminus Y}}( \hN,\hcA_{X\setminus Y})
\xrightarrow {t^k}\cext^1_{\str_{X\setminus Y}}( \hN,\hcA_{X\setminus Y})\br
\to 0,
\end{multline}
where we have written $(\odh{\hN})_k\seteq\odh{\hN}/t^k\odh{\hN}$.
Setting
\begin{equation*}
\cG_k \seteq \Ker(\cext^1_{\str_{X\setminus Y}}( \hN,\hcA_{X\setminus Y})  \xrightarrow {t^k}\cext^1_{\str_{X\setminus Y}}( \hN,\hcA_{X\setminus Y})),
\end{equation*}
we rewrite this sequence as
\begin{equation}
\label{first basic exact sequence}
0 \to (\odh{\hN})_k \to (\hN_k)^* \to \cG_k\to 0.
\end{equation}
As $\hN$ is reflexive and  making use of Lemma~\ref{reflexive and torsion free} \eqref{reflexive characterization} we conclude that
\begin{equation*}
\codim_{X\setminus Y}\Supp\bl \cext^1_{\str_{X\setminus Y}}( \hN,\hcA_{X\setminus Y})\br
\geq 2 \,.
\end{equation*}
Thus, by definition of $\cG_k$ we obtain
\begin{equation}
\codim_{X\setminus Y}(\Supp (\cG_k))\geq 2 \,.
\end{equation}
This, in turn, implies that
\begin{equation*}
\chom_{\cO_{X\setminus Y}}(\cG_k,\cO_{X\setminus Y})
=\cext^1_{\cO_{X\setminus Y}}(\cG_k,\cO_{X\setminus Y})=0\,.
\end{equation*}

 Finally, dualizing the exact sequence \eqref{first basic exact sequence} we conclude that
 \begin{equation}
 \label{bidual}
( \hN_k)^{**} \isoto (\odh{\hN})_k^*\,.
 \end{equation}

Substituting $\odh{\hN}$ for $\hN$ in  \eqref{auxiliary exact sequence}  and setting
\begin{equation}\label{def:Fk}
\cF_k \seteq \Ker(\cext^1_{\str_{X\setminus Y}}( \odh{\hN},\hcA_{X\setminus Y})  \xrightarrow {t^k}\cext^1_{\str_{X\setminus Y}}( \odh{\hN},\hcA_{X\setminus Y})),
\end{equation}
we obtain the exact sequence
\begin{equation*}
0 \to \hN_k \to (\odh{\hN})_k^* \to \cF_k\to 0.
\end{equation*}
Making use of \eqref{bidual} we can rewrite this exact sequence in the following form:
\begin{equation}
\label{second basic exact sequence}
0 \to \hN_k \to( \hN_k)^{**} \to \cF_k\to 0.
\end{equation}
Arguing as we did before for $\cG_k$ we see that
\begin{equation}
\label{support of F}
\codim_{X\setminus Y}(\Supp (\cF_k))\geq 2 \,.
\end{equation}
The last two statements form the basis for the rest of the argument.
The $\cF_k$ form an increasing sequence of coherent submodules of
a coherent $\hcA_{X\setminus Y}$-module
$\cext^1_{\str_{X\setminus Y}}( \odh{\hN},\hcA_{X\setminus Y})$.
Hence the union $\cF\seteq\cup_k\cF_k$ is a coherent $\hcA_{X\setminus Y}$-module.
Note that by definition, $\cF$ is precisely the  $t$-torsion  part
$\cext^1_{\str_{X\setminus Y}}( \odh{\hN},\hcA_{X\setminus Y})_{t-\mathrm{tors}}$ of
$\cext^1_{\str_{X\setminus Y}}( \odh{\hN},\hcA_{X\setminus Y})$.
Hence $t$ acts locally nilpotently on $\cF$, and
$\cF$ is a coherent $\cO_{X\setminus Y}$-module.

Let us us introduce the $\hcA_X$-module $\tN_k$ by setting
\begin{equation*}
\tN_k \ = \ j_*\bl(\hN_k)^{**}\br\,.
\end{equation*}
By the classical Theorem~\ref{6},
the $\tN_k$ are coherent $\cO_X$-modules.

Let us write $i_k\cl \tN_k \to \tN_{k+1}$ for the map induced by the  multiplication map $t\cl\hN_k \to \hN_{k+1}$ and $p_k \cl \tN_{k+1} \to \tN_{k}$ for the map induced by the natural projection $\hN_{k+1}\to \hN_k$. Then we have a commutative diagram
\begin{equation*}
\xymatrix@C=6ex{
\tN_k\ar[r]^-{i_k}\ar[d]^-{p_{ k-1}}\ar[dr]|-{t}&\tN_{k+1}\ar[d]^-{p_{ k}}\\
\tN_{ k-1}\ar[r]^-{i_{k-1}}&\tN_{k}\,.
}
\end{equation*}

Recall that we already know that the $\tN_k$ are coherent $\cO_X$-modules.
Thus,  in order to prove that  $\varprojlim \tN_k $ is coherent
it suffices to show, according to \cite[Proposition 1.2.18]{KS3}, that
the pro-objects (see ibid.) $\proolim[k]\ker(\tN_k \xrightarrow{t}\tN_k)$
and  $\proolim[k]\coker(\tN_k \xrightarrow{t}\tN_k)$
are locally represented by coherent $\hcA_{X}$-modules.
These pro-objects are isomorphic to
 $\proolim[k]\ker(\tN_k \To[{i_k}]\tN_{k+1})$
and   $\proolim[k]\coker(\tN_k \To[{i_k}]\tN_{k+1})$,
respectively.
Hence  $\varprojlim \tN_k $ is coherent
as soon as\begin{subequations}
\begin{multline}
\label{stability of ker}
\parbox{60ex}{$\ker(\tN_k \To[{i_k}]\tN_{k+1})\to\ker(\tN_{k-1} \To[{i_{k-1}}]\tN_{k})$
is an isomorphism locally for $k\gg0$,}
\end{multline}
\begin{multline}
\label{stability of coker}
\parbox{65ex}{$\coker(\tN_k \To[{i_k}]\tN_{k+1})\to\coker(\tN_{k-1} \To[{i_{k-1}}]\tN_{k})$
is an isomorphism locally for $k\gg0$.}
\end{multline}
\end{subequations}
In order to prove the statements above
we  consider the following commutative diagram:
\begin{equation}
\label{first}
\begin{gathered}
\xymatrix@C=7ex{
0\ar[r]&{\odh{\hN}} \ar[r]^{t^{k+1}}&{\odh{\hN}}\ar[r]&{(\odh{\hN})}_{k+1}
\ar[r]&0\\
0\ar[r]&{\odh{\hN}} \ar[r]^{t^{k}}\ar[u]_{\id}&{\odh{\hN}}\ar[r]\ar[u]_{t}
&{(\odh{\hN})}_k\ar[r]\ar[u]_t&0}
\end{gathered}
\end{equation}
with exact rows.

Dualizing the diagram,
we obtain a commutative diagram
\begin{equation*}
\xymatrix@C=2.7ex{
(\hN_{k+1})^{**}\ar[d]^-{\bwr}\\
\cext^1_{\hcA_{X\setminus Y}}((\odh{\hN})_{k+1},\hcA_{X\setminus Y})\ar[d]
\ar[r]&\cext^1_{\hcA_{X\setminus Y}}(\odh{\hN},\hcA_{X\setminus Y})
\ar[r]^{t^{k+1}}\ar[d]^t&
\cext^1_{\hcA_{X\setminus Y}}(\odh{\hN},\hcA_{X\setminus Y})\ar[d]_{\id}\\
\cext^1_{\hcA_{X\setminus Y}}((\odh{\hN})_{k},\hcA_{X\setminus Y})
\ar[r]&\cext^1_{\hcA_{X\setminus Y}}(\odh{\hN},\hcA_{X\setminus Y})\ar[r]^{t^k}&
\cext^1_{\hcA_{X\setminus Y}}(\odh{\hN},\hcA_{X\setminus Y})\\
(\hN_{k})^{**}\ar[u]^-{\bwr}}\label{dia:aa}
\end{equation*}

Making use of \eqref{second basic exact sequence}
and \eqref{def:Fk}, we obtain the commutative diagram:
\begin{equation}
\begin{gathered}
\label{p diagram}
\xymatrix{
0 \ar[r]&\hN_{k+1}\ar[r]\ar[d]&(\hN_{k+1})^{**}\ar[d]^{p_{k}\vert_{X\setminus Y}}\ar[r]
&\cF_{k+1}\ar[d]^{t}
\ar[r]& 0\\
0\ar[r]&\hN_{k}\ar[r]&(\hN_{k})^{**}\ar[r]&\cF_{k}\ar[r]&0
}
\end{gathered}
\end{equation}
with exact rows. Here the commutativity of the right
square follows from the previous diagram.

As the first column is a surjection, we conclude:
\eq
&&\coker(p_{k})|_{X\setminus Y}\ \simeq\coker(\cF_{k+1}\To[\ t\ ]\cF_{k})\,.
\label{eq:cok fk}
\eneq
Hence along with \eqref{support of F}, we obtain the estimate
\eq\label{cokernel of p}
&&\codim_{X}(\Supp (\coker(p_k)))\geq 2 \,.
\eneq

We make one further observation at this point. {}From  \eqref{p diagram} we also obtain an exact sequence
\begin{equation*}
0 \to \hN \to \varprojlim (\hN_k)^{**} \to \varprojlim \cF_k\,,
\end{equation*}
where the last projective system is given by
\begin{equation*}
\dots \to \cF_{k+1} \xrightarrow{t} \cF_k \xrightarrow{t} \cF_{k-1} \to \dots\;.
\end{equation*}
We now recall that
 \begin{equation*}
\cF_k \subset \cF\seteq
\cext^1_{\str_{X\setminus Y}}(\odh{\hN},\hcA_{X\setminus Y})_{t-\mathrm{tors}}\,.
\end{equation*}
Since $\cF$ is a coherent $\cO_{X\setminus Y}$-module, we obtain
\begin{equation}
\label{local stabilization}
\begin{gathered}
\text{Locally on $X\setminus Y$ there exists a integer $k_0$ such that}
\\
 \text{$t^{k_0}\cF_k=0$ for all $k$.}
\end{gathered}
\end{equation}

This implies that

\begin{equation}
\varprojlim \cF_k \ = \ 0\quad\text{on $X\setminus Y$.}
\end{equation}

Thus, we obtain \eqref{minor point}. The remainder of this section
is devoted to the proof of \eqref{main point}.

Let us consider a slight variant of \eqref{p diagram}:
\eqn
\xymatrix{
0 \ar[r]&\hN_k\ar[r]\ar[d]^t&(\hN_k)^{**}\ar[r]\ar[d]^t&\cF_k \rule[-1.5ex]{0ex}{2ex}\ar[r]\ar@{>->}[d]&0\\
0 \ar[r]&\hN_{k+1}\ar[r]&(\hN_{k+1})^{**}\ar[r]&\cF_{k+1} \ar[r]&0
}
\eneqn
where the rows are exact.

As $\hN$ is torsion free, the left vertical arrow is a monomorphism.
As the right vertical arrow is an inclusion, we conclude that $(\hN_k)^{**}  \xrightarrow{t} (\hN_{k+1})^{**} $ is also a monomorphism and hence

\begin{equation}
\text{The maps $i_k\cl \tN_k \to \tN_{k+1}$ are monomorphisms}\,.
\end{equation}
It of course implies \eqref{stability of ker}.
It only remains to prove
\eqref{stability of coker}.

We will argue next that the square
\begin{equation}
\label{Cartesian}
\begin{CD}
\tN_k @>{i_k}>> \tN_{k+1}
\\
@V{p_{k-1}}VV @VV{p_k}V
\\
\tN_{k-1} @>{i_{k-1}}>> \tN_k
\end{CD}
\end{equation}
is Cartesian.

To this end we consider the following commutative diagram:

\begin{equation*}
\begin{CD}
@. 0 @. 0 @. 0 @.
\\
@. @VVV @VVV @VVV @.
\\
0 @>>> \hN_k @>{(p_{k-1},i_{k})}>> \hN_{k-1}\oplus\hN_{k+1}@>{(i_{k-1},-p_k)}>> \hN_k
\\
@. @VVV @VVV @VVV @.
\\
0 @>>> (\hN_k)^{**} @>{(p_{k-1},i_{k})}>> (\hN_{k-1})^{**}\oplus(\hN_{k+1})^{**}@>{(i_{k-1},-p_k)}>> (\hN_k)^{**}
\\
@. @VVV @VVV @VVV @.
\\
0 @>>> \cF_k @>{(t,\id)}>> \cF_{k-1}\oplus\cF_{k+1}@>{(\id,-t)}>> \cF_k
\\
@. @VVV @VVV @VVV @.
\\
@. 0 @. 0 @. 0 @.
\end{CD}
\end{equation*}
The columns are exact as they are obtained from \eqref{second basic exact sequence} and the top row is exact as
\begin{equation*}
\begin{CD}
\hN_k @>{i_k}>> \hN_{k+1}
\\
@V{p_{k-1}}VV @VV{p_k}V
\\
\hN_{k-1} @>{i_{k-1}}>>\hN_k
\end{CD}
\end{equation*}
is Cartesian. We  check easily that the bottom row is exact as follows. Let $(a,b)\in \cF_{k-1}\oplus\cF_{k+1}$ such that $a=tb$. Then, clearly, $b\in\cF_k $. Thus, we conclude that  the middle row is also exact and hence
\begin{equation*}
\begin{CD}
(\hN_k)^{**} @>{i_k}>> (\hN_{k+1})^{**}
\\
@V{p_{k-1}}VV @VV{p_k}V
\\
(\hN_{k-1})^{**}@>{i_{k-1}}>> (\hN_k)^{**}
\end{CD}
\end{equation*}
is Cartesian. As $j_*$ is a left exact functor we conclude that \eqref{Cartesian} is Cartesian.

Let us now consider the following commutative diagram
\eq
&&\hs{.9ex}\xymatrix@C=2.2ex{
&0\ar[d]& 0\ar[d]\\
0\ar[r]&\ker(p_{k-1})\ar[r]\ar[d]&\ker(p_k)\ar[r]\ar[d]&0\ar[d]\\
0\ar[r]&\tN_k \ar[d]^{p_{k-1}}\ar[r]^{i_k}&\tN_{k+1}\ar[r]\ar[d]^{p_k}&\coker(i_k)
\ar[r]\ar[d]&0\\
0\ar[r]&\tN_{k-1}\ar[d]\ar[r]^{i_{k-1}}&\tN_k\ar[r]\ar[d]&\coker(i_{k-1})
 \ar[r]\ar[d]&0\\
0\ar[r]&\coker(p_{k-1})\ar[d]\ar[r]&\coker(p_k)\ar[d]\ar[r]&
\coker(\tN_{k-1}\oplus\tN_{k+1}\xrightarrow{}\tN_k)\ar[d]\ar[r]&0\\
&0&0&0
}\label{diag:square}
\eneq
The fact that the square \eqref{Cartesian}  is Cartesian implies
\begin{equation}
\parbox{64ex}
{$\coker(i_k) \to  \coker(i_{k-1}) $ and $\coker(p_{k-1}) \to \coker(p_k)$
are monomorphisms.}
\end{equation}
Thus all the rows and columns in the diagram above are exact.

By induction on $\codim\; Y$, it suffices to prove
\eqref{stability of coker} in the neighborhood of a smooth point $y$ of $Y$.

We will now analyze $\coker(\tN_{k-1}\oplus\tN_{k+1} \xrightarrow{}\tN_k)$.
{}From \eqref{cokernel of p} it follows that
\begin{equation}
\label{weak estimate}
\codim_X\Supp(\coker(\tN_{k-1}\oplus\tN_{k+1} \xrightarrow{}\tN_k)) \geq 2.
\end{equation}
We will next argue:
\begin{equation}
\ba{l}
\codim_X\Supp(\coker(\tN_{k-1}\oplus\tN_{k+1} \xrightarrow{}\tN_k)) \geq 3
\quad\text{for $k\gg0$}\\[1ex]
\hs{40ex}\text{on a neighborhood of $y$ .}\ea
\end{equation}

To do so, we  write $X$ locally as a product $X\simeq D_m \times D_\ell$,
where $D_m$ is an $m$-dimensional  ball of radius 2 in $\bC^m$
and $Y$ corresponds to $\{0\}\times D_\ell$ and $y$ to $(0,0)$.
Hence by the assumption, we have $m\ge3$.
Let us write $\pi\cl X\to D_\ell$ for the projection with respect to
this decomposition.
Let us take a relatively compact neighborhood $U$ of $0\in D_\ell$,
and set $W'\seteq\set{z\in D_m}{1/2<|z|<1}$ and $W\seteq
\set{z\in D_m}{|z|<1}$.
By compactness, the $\cF_k$ stabilize on $W'\times U$:
\begin{equation}
\begin{gathered}
\text{there exists an integer $k_0$ such that for $k\geq k_0$}
\\
 \text{$\cF_k\vert_{W'\times U}=\cF_{k_0}\vert_{W'\times U}$.}
\end{gathered}
\end{equation}
By \eqref{eq:cok fk},
$\coker(p_{k})|_{X\setminus Y}\simeq \cF_{k}/t\cF_{k+1}$ and therefore
$$\coker(p_{k-1})\vert_{W'\times U}\to\coker(p_k)\vert_{W'\times U}$$
is an isomorphism for $k\geq k_0$.
Thus, the exactness of the bottom row in \eqref{diag:square} implies
\begin{equation*}
\Supp(\coker(\tN_{k-1}\oplus\tN_{k+1} \xrightarrow{}\tN_k))\vert_{W'\times U}
=\emptyset \qquad \text{for $k\geq k_0$.}
\end{equation*}
Therefore,  the projection $\pi$ restricted to
$$\Supp(\coker(\tN_{k-1}\oplus\tN_{k+1} \xrightarrow{}\tN_k))
\cap (W\times U)\To U$$
is proper and hence it is a finite morphism.
This implies that
$$\codim_X\Supp(\coker(\tN_{k-1}\oplus\tN_{k+1} \xrightarrow{}\tN_k))\cap (W\times U)
\geq \dim W\ge3$$
for $k\ge k_0$.

Thus, shrinking $X$ if necessary, we may assume that
\begin{equation}
\label{strong estimate}
\text{$\codim_X\Supp(\coker(\tN_{k-1}\oplus\tN_{k+1} \xrightarrow{}\tN_k))
\geq 3$ for $k\gg0$.}
\end{equation}

Let us recall the Cartesian square \eqref{Cartesian}:

\begin{equation}
\xymatrix{
\tN_k\ar[d]_{p_{k-1}} \ar[r]^{i_k}\ar@{}[dr]|{\square}&\tN_{k+1}\ar[d]^ {p_k}\\
\tN_{k-1}\ar[r]^{i_{k-1}}&\tN_k\,.}
\label{Cartesian1}
\end{equation}
We have seen that this Cartesian square has the following properties:
\eq
&&\parbox{65ex}{
\bnum
\item the $\tN_k$ are reflexive $\cO_X$-modules,\label{sq1}
\item $\tN_k=0$ for $k\le 0$.\label{sq0}
\item the $i_k$ are monomorphisms,\label{sq2}
\item $\codim_X \Supp(\coker(p_k))\ge2$,\label{sq3}
\item $\codim_X\Supp\bl\coker(\tN_{k-1}\oplus\tN_{k+1}\To\tN_k)\br\ge 3$
for $k\gg0$.
\label{sq4}
\ee
}\label{cond:square}
\eneq

We dualize this square to obtain:

\begin{equation}
\label{Dual Cartesian}
\begin{CD}
\tN_k^*@>{p_k^*}>> \tN_{k+1}^*
\\
@V{i_{k-1}^*}VV @VV{i_k^*}V
\\
\tN_{k-1}^* @>{p_{k-1}^*}>> \tN_k^*.
\end{CD}
\end{equation}

Assuming only \eqref{cond:square},
we shall show that this square is Cartesian and it also satisfies
the properties in \eqref{cond:square}.
Of course, \eqref{sq1} and \eqref{sq0} are obvious.

Because  $\codim_X(\Supp(\coker(p_k)))\geq 2$, we have:
\begin{equation}
\text{the maps $p_k^*$ are monomorphisms.}
\end{equation}

As \eqref{Cartesian1} is Cartesian, the sequence

\begin{equation*}
\begin{CD}
0 @>>> \tN_k @>{(p_{k-1},i_{k})}>> \tN_{k-1}\oplus\tN_{k+1}@>{(i_{k-1},-p_k)}>> \tN_k
\end{CD}
\end{equation*}
is exact.
Let us brake this into two exact sequences:

\begin{equation}
\begin{CD}
0 @>>> \tN_k @>{(p_{k-1},i_{k})}>> \tN_{k-1}\oplus\tN_{k+1}@>>> \cK @>>> 0
\end{CD}
\end{equation}
and
\begin{equation}
\label{eq:56}
0 \to \cK \to  \tN_k \to  \tN_k/\cK \to 0.
\end{equation}
Dualizing the first exact sequence we obtain an exact sequence
\begin{equation}
\begin{CD}
0 @>>> \cK^* @>>> \tN_{k-1}^*\oplus\tN_{k+1}^*@>{(p_{k-1}^*, i_k^*)}>>
\tN_k ^*\,.
\end{CD}\label{eq:kex}
\end{equation}

By \eqref{cond:square} \eqref{sq4},
we conclude that
$\codim_X(\Supp(\tN_k/\cK))\geq 3$. Hence \eqref{eq:56} implies that
\begin{equation*}
 \tN^*_k = \cK^*\,.
\end{equation*}

The exact sequence \eqref{eq:kex} then reads as
\begin{equation*}
\begin{CD}
0 @>>> \tN_k^* @>{(i_{k-1}^*,-p_{k}^*)}>> \tN_{k-1}^*\oplus\tN_{k+1}^*@>{(p_{k-1}^*,i_k^*)}>> \tN_k ^*
\end{CD}
\end{equation*}
and thus we conclude that  \eqref{Dual Cartesian} is Cartesian.

Let us now show that the $i_k^*$ satisfy \eqref{sq3}.

Dualizing
\begin{equation*}
\begin{CD}
0 @>>> \tN_k @>{i_k}>> \tN_{k+1} @>>> \coker(i_k)@>>> 0
\\
@. @V{p_{k-1}}VV @VV{p_k}V @VVV @.
\\
0 @>>> \tN_{k-1} @>{i_{k-1}}>> \tN_k @>>> \coker(i_{k-1}) @>>> 0,
\end{CD}
\end{equation*}
we obtain
\begin{equation}
\label{auxiliary for duals of i_k}
\xymatrix{
\tN_{k+1}^* \ar[r]^{i_k^*}&\tN^*_k\ar[r]&\cext^1(\coker(i_k),\cO_X)\\
\tN^*_k \ar[r]^{i_{k-1}^*}\ar[u]_{p_{k-1}^*}&\tN_{k-1}\ar[r]\ar[u]^{p_k^*}
&\cext^1(\coker(i_{k-1}),\cO_X).\ar[u]
}
\end{equation}

Let us now make a sequence of observations. First,  $\coker(i_k)$ is torsion free as   $\coker(i_k)\subset \coker(i_0) = \tilde \cN_1$ and  $\tilde \cN_1$ is torsion free. Hence, by Lemma~\ref{reflexive and torsion free}, we obtain:
\begin{equation*}
\codim_X\Supp(\cext^1(\coker(i_k),\cO_X)) \geq 2
\end{equation*}
and so
\begin{equation*}
\codim_X\Supp(\coker(i_k^*)) \geq 2,
\end{equation*}
that is, the $i_k$ satisfy the condition \eqref{cond:square} \eqref{sq2}.

Second, let us consider the exact sequence
\begin{equation*}
0\To\coker(i_k)\To\coker(i_{k-1})\To
\coker(\tN_{k-1}\oplus\tN_{k+1} \xrightarrow{}\tN_k)\To 0.
\end{equation*}
{}From it we obtain an exact sequence
\begin{multline}
\cext^1(\coker(\tN_{k-1}\oplus\tN_{k+1} \xrightarrow{}\tN_k) ,\cO_X)\to \cext^1(\coker(i_{k-1}),\cO_X)
\\
\to\cext^1(\coker(i_k),\cO_X)\to\cext^2(\coker(\tN_{k-1}\oplus\tN_{k+1} \xrightarrow{}\tN_k) ,\cO_X).
\end{multline}
Now,
by \eqref{cond:square} \eqref{sq4}, we conclude that,
for a sufficiently large integer $k_0$,
$$\text{$\cext^\nu(\coker(\tN_{k-1}\oplus\tN_{k+1}
\xrightarrow{}\tN_k) ,\cO_X)=0$ for $\nu=0,1,2$ and $k\ge k_0$,}$$
and hence
\begin{equation*}
 \cext^1(\coker(i_{k-1}),\cO_X)
\isoto
\cext^1(\coker(i_k),\cO_X)\quad\text{for $k\ge k_0$.}
\end{equation*}
{}From \eqref{auxiliary for duals of i_k} we conclude that
a sequence $\{\coker(i_{k}^*)\}_{k\ge k_0}$ is an increasing sequence of
coherent subsheaves of $\cext^1(\coker(i_{k_0}),\cO_X)$.
Possibly by shrinking $X$, we see that
\begin{equation}
\begin{gathered}
\text{There exists an integer $k_1$ such that}
\\
\qquad\text{$\coker(i_{k_1}^*)\isoto\coker(i_{k}^*)$ for $k\geq k_1$}\,.
\end{gathered}\label{st:coki}
\end{equation}
The following diagram dual to \eqref{diag:square}
has also exact rows and exact columns:
\eqn
&&\hs{2ex}\xymatrix@C=2.5ex{
&0\ar[d]& 0\ar[d]\\
0\ar[r]&\ker(i_{k-1}^*)\ar[r]\ar[d]&\ker(i_k^*)\ar[r]\ar[d]&0\ar[r]\ar[d]&0\\
0\ar[r]&\tN_k^* \ar[d]^{i_{k-1}^*}\ar[r]^{p_k^*}&\tN_{k+1^*}\ar[r]
\ar[d]^{i_k^*}&\coker(p_k^*)
\ar[r]\ar[d]&0\\
0\ar[r]&\tN_{k-1}^*\ar[d]\ar[r]^{p_{k-1}^*}&\tN_k\ar[r]\ar[d]&\coker(p_{k-1}^*)
 \ar[r]\ar[d]&0\\
0\ar[r]&\coker(i_{k-1}^*)\ar[d]\ar[r]&\coker(i_k^*)\ar[d]\ar[r]&
\coker(\tN_{k-1}^*\oplus\tN_{k+1}^*\xrightarrow{}\tN_k^*)\ar[d]\ar[r]&0\\
&0&0&0
}
\eneqn
Hence \eqref{st:coki} implies that
\eq
&&\text{$\coker(\tN_{k-1}^*\oplus\tN_{k+1}^*\xrightarrow{}\tN_k^*)\simeq0$
locally for $k\gg0$.}\label{eq:N_k}
\eneq
In particular the dual diagram
\eqref{Dual Cartesian} satisfies \eqref {cond:square} \eqref{sq4}.

We can now reverse this process and start from the dual diagram

\begin{equation}
\label{dual diagram}
\begin{CD}
\tN_k^*@>{p_k^*}>> \tN_{k+1}^*
\\
@V{i_{k-1}^*}VV @VV{i_k^*}V
\\
\tN_{k-1}^* @>{p_{k-1}^*}>> \tN_k^*\,.
\end{CD}
\end{equation}

Dualizing it we obtain our original diagram

\begin{equation*}
\begin{CD}
\tN_k @>{i_k}>> \tN_{k+1}
\\
@V{p_{k-1}}VV @VV{p_k}V
\\
\tN_{k-1} @>{i_{k-1}}>> \tN_k
\end{CD}
\end{equation*}

All the hypotheses \eqref{cond:square}
 are satisfied for the dual diagram \eqref{dual diagram}, and therefore
the dual statement of \eqref{eq:N_k} holds, namely
\begin{equation*}
\coker(\tN_{k-1}\oplus\tN_{k+1} \xrightarrow{}\tN_k)=0 \quad
\text{locally for $k\gg0$.}
\end{equation*}
Therefore the right column in \eqref{diag:square} implies that
$\coker(i_k)\to \coker(i_{k-1})$ is an isomorphism for $k\gg0$.
Thus we established \eqref{stability of coker},
and the proof of Theorem~\ref{5} is complete.

\section{The commutative formal submodule extension theorem}
\label{The formal submodule theorem}

In this section we give a proof of Theorem~\ref{9}.
Let us recall set-up. We consider a complex manifold $X$ and
a subvariety $Y$ of $X$ of codimension at least two
and we write $j\cl X\setminus Y \to X$ for the inclusion.
Let $\hN$ be a coherent $\hcA_X$-module,
$\hL$ a torsion free coherent $\hcA_{X\setminus Y}$-module
and let $\vphi\cl j^{-1}\hN\epito\hL$ be an epimorphism  of
 $\hcA_{X\setminus Y}$-modules.
We will show that the image of $\hN\to j_*\hL$ is a coherent $\hcA_X$-module.

\medskip
Set
$\hL'\seteq\odh\odh\hL$.
Then $\hL'$ is a reflexive coherent $\hcA_{X\setminus Y}$-module
and we have a monomorphism
$\hL\monoto \hL'$  because  $\hL$ is torsion free $\hcA_{X\setminus Y}$-module.

We first observe:
\begin{equation}
\text{$\hL/(\hL \cap t^k\hL')$ is torsion free coherent $\cO_{X\setminus Y}$-module.}
\end{equation}
The coherency is easily deduced
for example from the fact that $\hL$,
$\hL'$ are all lattices. To argue that it  is torsion  free we note that
$\hL/(\hL \cap t^k\hL')\subset \hL'/t^k\hL'$
and thus we are reduced to showing that  $\hL'/t^k\hL'$ is a
torsion free $\cO_{X\setminus Y}$-module.
This is a general fact about reflexive $\hcA$-modules
which can be argued directly
but also follows immediately from \eqref{second basic exact sequence}.

Set $\hN_k=\hN/t^k\hN$. Then we have an epimorphism
$j^{-1}\hN_k\epito \hL/(\hL\cap t^k\hL')$.
We now apply  the Siu-Trautmann Theorem~\ref{Siu Trautmann} and conclude that
$\tN_k\seteq\operatorname{Im}\bl\hN_k \to j_*(\hL/(\hL \cap t^k\hL'))\br$ is coherent.
Then  the morphism $\hN\to j_*\cL$ decomposes
into the composition of $\hN\to \varprojlim \tN_k$ and a monomorphism
$$\varprojlim \tN_k
\monoto\varprojlim j_*(\hL/(\hL \cap t^k\hL'))
\simeq j_*\varprojlim \hL/(\hL \cap t^k\hL')
\simeq j_*\hL.$$
Hence we have reduced the problem to
the coherency of $\varprojlim \tN_k$.
In order to see this,
we proceed as in the previous section and appeal to
\cite[Proposition 1.2.18]{KS3}.

We first decompose $\tN_k \xrightarrow{t}\tN_k$ into a composition of two maps just as in the previous section. We begin with
the commutative diagram
\eq&&
\ba{l}
\xymatrix{
\hN_k \ar[r]^{i_k}\ar[d]_{p_{k-1}}\ar[dr]|-{t\rule[-.5ex]{0ex}{2ex}}&\hN_{k+1} \ar[d]^{p_{k}}\\
\hN_{k-1}\ar[r]^{i_{k-1}}&\hN_k
}\ea
\label{first diagram}
\eneq
where we have written $i_k\cl \hN_k \to \hN_{k+1}$ for the multiplication map $t\cl\hN_k \to \hN_{k+1}$ and $p_k \cl \hN_{k+1} \to \hN_{k}$ for the natural projection $\hN_{k+1}\to \hN_k$.
The maps $p_k$ are, obviously, epimorphisms. We have a similar commutative diagram on $X\setminus Y$
\begin{equation*}
\xymatrix{
\hL/(\hL \cap t^k\hL')\ar[r]^{i_k}\ar[d]_{p_{k-1}}\ar[dr]|-{t\rule[-.5ex]{0ex}{2ex}}
&\hL/(\hL \cap t^{k+1}\hL')
\ar[d]^{p_k}\\
\hL/(\hL \cap t^{k-1}\hL')\ar[r]^{i_{k-1}}&\hL/(\hL \cap t^k\hL')
}
\end{equation*}
where the $i_k$ are monomorphisms and the $p_k$ are epimorphisms. Applying $j_*$ to this diagram we obtain
\begin{equation}
\label{second}
\begin{CD}
j_*(\hL/\hL \cap t^k\hL') @>{i_k}>>j_*(\hL/\hL \cap t^{k+1}\hL')
\\
@V{p_{k-1}}VV @VV{p_k}V
\\
j_*(\hL/\hL \cap t^{k-1}\hL')@>{i_{k-1}}>>j_*(\hL/\hL \cap t^k\hL')
\end{CD}
\end{equation}
where, by the left exactness of $j_*$, the $i_k$ are still monomorphisms. Taking the image of the diagram \eqref{first diagram}
to the diagram \eqref{second} we obtain a commutative diagram
\begin{equation}\ba{l}
\xymatrix{
\tN_k\ \ar@{>->}[r]^{i_k}\ar@{->>}[d]_{p_{k-1}}\ar[dr]|-{t}
&\tN_{k+1} \ar@{->>}[d]^{p_{k}}\\
\tN_{k-1}\ \ar@{>->}[r]^{i_{k-1}}&\tN_k
}\ea
\end{equation}
where the $i_k$ are monomorphisms and the $p_k$ are epimorphisms.

In order to see that $\prolim\tN_k$ is a coherent $\hcA_X$-module,
it suffices to prove the following two statements:
\begin{subequations}
\begin{equation}
\label{submodule ker stability}
\text{$\ker(\tN_{k} \To[i_k]\tN_{k+1})\to \ker(\tN_{k-1} \To[i_{k-1}]\tN_{k})$
is an isomorphism for $k\gg0$,}
\end{equation}

\begin{multline}
\label{submodule coker stability}
\parbox{65ex}{$\coker(\tN_{k} \To[i_k]\tN_{k+1})\to \coker(\tN_{k-1} \To[i_{k-1}]\tN_{k})$
is an isomorphism for $k\gg0$,}
\end{multline}
\end{subequations}
by \cite[Proposition 1.2.18]{KS3}.
The first stability \eqref{submodule ker stability} is obvious.
Let us show \eqref{submodule coker stability}.
We write $\cF_k= \ker(p_k)$ and then we have:
\begin{equation*}
\begin{CD}
@. 0 @. 0 @. 0 @.
\\
@. @VVV @VVV @VVV@.
\\
0 @>>> \cF_{k-1} @>>> \cF_k@>>> \cF_k/\cF_{k-1} @>>> 0
\\
@. @VVV @VVV @VVV @.
\\
0 @>>> \tN_k @>{i_k}>> \tN_{k+1} @>>> \coker(i_k)@>>> 0
\\
@. @V{p_{k-1}}VV @VV{p_k}V @VV{p'_k}V @.
\\
0 @>>> \tN_{k-1} @>{i_{k-1}}>> \tN_k @>>> \coker(i_{k-1}) @>>> 0
\\
@. @VVV @VVV @VVV @.
\\
@. 0 @>>> 0 @>>> \coker(p'_k) @>>> 0
\\
@. @. @. @VVV @.
\\
 @.@.@.0 @.
\end{CD}
\end{equation*}
{}From this commutative diagram with exact rows and columns we conclude
that it is enough to show
that $\cF_{k-1}\to\cF_{k}$ is an isomorphism for $k\gg0$.
We also note that the  $\cF_k$ are torsion free $\cO_X$-modules
as they are submodules of the torsion free $\cO_X$-module $\tN_k$.

Let us first work locally outside of $Y$. Outside of $Y$ we have
\begin{equation}
\cF_k|_{X\setminus Y} \ = \ \ker\bl\dfrac{\hL}{\hL \cap t^{k+1}\hL'}
 \to\dfrac{\hL}{\hL \cap t^{k}\hL'}
\br=\frac {\hL \cap t^{k}\hL'}{\hL \cap t^{k+1}\hL'}.
\end{equation}
We now identify
\begin{equation}
\cF_k|_{X\setminus Y} \ = \ \frac {t^{-k}\hL \cap \hL'}{t^{-k}\hL \cap t \hL'}
\subset \frac { \hL'}{ t \hL'} \,.
\end{equation}
Under this identification the map $t\cl \cF_{k-1} \to \cF_{k}$ becomes an inclusion and we get an increasing family of coherent subsheaves of
${ \hL'}/{ t \hL'}$. By the Noetherian property this sequence
stabilizes locally on $X\setminus Y$.

Let us now work in the neighborhood of a point $y\in Y$.
 We then  proceed in the same manner as in the last section.
We write $X$ locally as a product $X= D_m \times D_\ell$, where
$D_m$ is an $m$-dimensional  ball of radius 2 in $\bC^m$,
$Y=\{0\}\times D_\ell$ and $y=(0,0)$.
Let us write $\pi\cl X\to D_\ell$ for the projection
with respect to this decomposition.
We consider a relatively compact open neighborhood $K$ of $y$ of the form:
\begin{equation}
\begin{gathered}
K\ = \set{z\in D_m }{\|z\|<1}  \times U\,,
\end{gathered}
\end{equation}
where $U$ is a relatively compact open neighborhood of $y\in D_\ell$.
Since the $\cF_k$ stabilize on any compact subset of $X\setminus Y$,
the projection  $\Supp(\cF_{k}/\cF_{k-1})\cap(W\times U)
\to U$ is a finite map and this implies that
\begin{equation*}
\text{$\codim_X\Supp(\cF_{k}/\cF_{k-1})\cap(W\times U) \geq 2$ for $k\geq k_0$.}
\end{equation*}
Hence shrinking $X$ if necessary, we may assume that
$\codim_X\Supp(\cF_{k}/\cF_{k-1}) \geq 2$ for $k\ge k_0$.
It implies that $\cF_{k-1}^{**}\to \cF_{k}^{**}$ is an isomorphism
for
$k\ge k_0$.
As $\cF_k$ is torsion free, $\cF_k\subset \cF_k^{**}$.
Thus, again by the Noetherian property, the
increasing sequence $\cF_k \subset \cF_{k_0}^{**} $ locally stabilizes. This concludes the argument.

 \section{Comparison of the formal and convergent cases}
 \label{comparison}

  In this section we prove Theorem \ref{8}. Let us recall the statement. We are given a locally free $\cA_{X-Y}$-module $\cM$
of finite rank on $X-Y$ and we assume that
$\hM\seteq\hcA_{X-Y}\otimes_{\cA_{X-Y}}\cM$ extends
to a locally free $\hcA_{X}$-module
 to all of $X$. We are to show that  if $\dim Y \leq \dim X -2$, then $\cM$ also extends to locally free $\cA_{X}$-module on all of $X$.

 Let us begin with a reduction.
 The question being local, we can assume, proceeding by induction,
that we are working in the neighborhood of a smooth point $y$ of $Y$.
Furthermore, we can and will assume that the codimension of $Y$
is precisely two. We can do so because if $Y$ is not of codimension two
we can always replace it in the neighborhood of $y$
by a larger codimension two submanifold.
Furthermore, we may assume that
 $\hM$ is a free $\hcA_{X\setminus Y}$-module.

Thus, we are reduced to  the following situation. Let us write $\bC^m =  \bC^2 \times \bC^{m-2}$. Let us consider a small neighborhood
$$W = \set{(z_1,\dots, z_m)\in \bC^m}{|z_i| < \rho \  \text{for all}\  1\leq i\leq m}$$ of the origin.
We consider $W$ as a neighborhood of a smooth point $y\in Y$ in $X$ such that $y$ corresponds to the origin and $Y$ in the neighborhood of $y$ to the locus $(\{0\} \times  \bC^{m-2})\cap W$. Let us write $Z =  (\{0\} \times  \bC^{m-2})\cap W$. We now restate out hypotheses in this context. We are given a locally free $\cA_{W-Z}$-module $\cM$ on $W - Z$ such that the corresponding $\hcA_{W-Z}$ module   $\hM$ is trivial on $W - Z$.  If we can show that the locally free module $\cM$ is trivial on $W - Z$ then it of course would extend. This is probably true, but we will prove a slightly weaker statement where we shrink the neighborhood $W$.  This is harmless for our purposes. Let us then write
\begin{equation}
N \ = \ \{(z_1,\dots, z_m)\in \bC^m \mid |z_i| < \rho' \}\subset W
\end{equation}
for $\rho'<\rho$ and we consider the following family of open subsets of $N$:
\begin{equation}
N_\delta \ = \ N \setminus
\set{(z_1,\dots, z_m)\in \bC^m}{\mid |z_1| \leq \delta, \ |z_2| \leq \delta}\,;
\end{equation}
where we vary $\delta$ with $0\le\delta<\rho'$.
Note that $\bar N\subset W$ and $\bar N_\delta  \subset W\setminus Z$ are compact
for $\delta>0$.

{}From the discussion above we conclude that Theorem~\ref{8} follows from the claim:\begin{equation}
\label{first reduction}
\text{$\cM$ is trivial on $N_0$.}
\end{equation}
We first observe that:
\begin{equation}
\label{reduction to relatively compact}
\text{$\cM$ is trivial on $N_0$ if it is trivial on $N_\delta$ for all sufficiently small $\delta>0$}\,.
\end{equation}
Let us argue this. We write $d$ for the rank of $\cM$. For all small $\delta$ we have an isomorphism
\begin{equation*}
\cA_{N_\delta}^{\oplus d} \isoto[{\ f_\delta}] \cM|_{{N_\delta} }\,.
\end{equation*}
 For $\delta'<\delta$ we then have a map
 \begin{equation*}
 \cA_{N_\delta}^{\oplus d}  \xrightarrow
{f_{\delta'}^{-1}\circ f_\delta} \cA_{N_\delta'}^{\oplus d}\,,
 \end{equation*}
 which is defined and is an isomorphism on $N_\delta$. By Hartogs' theorem this map extends to all of $N$. Let us write $r_{\delta',\delta}$ for the resulting automorphism of $\cA_N^{\oplus d}$. Then we have
 \begin{equation*}
 f_\delta \ = \ f_{\delta'}\circ r_{\delta',\delta} \qquad \text{on} \ \ N_\delta\,.
 \end{equation*}
 As the right hand side of the formula is defined on $N_{\delta'}$ for any $0<\delta'<\delta$ we see that the map $f_\delta$ extends to an isomorphism
 \begin{equation*}
 \cA_{N_0}^{\oplus d}  \isoto[{ f_0}] \cM|_{{N_0} }\,.
 \end{equation*}
 Thus we have established \eqref{reduction to relatively compact}.

\medskip
 We will now work on a particular $N_\delta$ keeping the assumption that $\delta$ is small. This hypothesis will be used at some point for purely technical reasons.

 We now consider $\cM$ on $\bar N_\delta$. As $\hM$ is locally free,
$\cM$ is also a locally free $\cA_{W\setminus Z}$-module.
Hence there will be a finite open cover $V_i$ of
$\bar N_\delta$ so that  $\cM$ is trivial on $V_i$.
We choose particular trivializations on each of the $V_i$.
As the cover is finite, we can find a $C>0$ such that all the transition functions take values in $GL_d(A_C)$. We conclude
\begin{equation}
\parbox{58ex}{there exist $C>0$ and a locally free
$(\cA_X^C\vert_{N_\delta})$-module $\cM_C$ such that
$\cM|_{N_\delta}\simeq \cA_X\vert_{N_\delta}\otimes_{\cA_X^C\vert_{N_\delta}}\cM_C$.}
\end{equation}

 We will be working with transition functions and to this end we need to set up some notation.  Let us consider the groups $GL_d(A_C)$, $GL_d(A)$, and $GL_d(\hA)$. As usual, we will write $\gl_d$ for $d\times d$-matrices and consider it as a group via its usual additive structure.

Let us introduce the Banach algebra  $\gl_d(A_C)$.
Pick $a\in \gl_d(A_C)$. We write
 \begin{equation}
 a \ = \ \sum_{n=0}^\infty a_n t^n
\qquad\text{with $ a_n\in \gl_d(\bC)$ ($n\ge0$).}\label{eq:a exp}
 \end{equation}
We now define a  norm on $\gl_d(A_C)$ in the following manner
\begin{equation}
\| a \|_C \ = \ \sum_{n=0}^\infty \|a_n \|\frac {C^n}{n!}\qquad \text{where  $\|a_n \|$ is the operator norm on $\gl_d(\bC)$} \,.
\end{equation}
Thus, we have
\begin{equation}
\text{for $a\in\gl_d(A)$,
$a\in \gl_d(A_C)$ if and only if $\| a \|_C< \infty$.}
\end{equation}
Therefore $\gl_d(A_C)$ is a Banach algebra and $GL_d(A_C)$
consists of units in the Banach algebra $\gl_d(A_C)$.
Also,  $\gl_d(A)$ is a topological algebra and
we can view $GL_d(A)$ as units in $\gl_d(A)$.
We have, of course,
 \begin{equation}
\gl_d(A) \ = \  \varinjlim_{C>0}\gl_d(A_C), \qquad GL_d(A) \ = \  \varinjlim_{C>0} GL_d(A_C).
 \end{equation}

It follows from Lemma~\ref{lem:Banachring} and Proposition~\ref{structure of A} that
\begin{equation}
\text{The rings $A_C$ and $A$ are local rings}\,.
\end{equation}
Thus, it is easy to describe the units.
Let $a\in \gl_d(A_C)$. We write it as in $\eqref{eq:a exp}$.
 Then
 \begin{equation}
\text{$a\in GL_d(A_C)$ if and only if $a_0\in GL_d(\bC)$}\,.
\end{equation}
Let us consider the canonical map $GL_d(A_C)\to GL_d(\bC)$ mapping $a\mapsto a_0$. Let us write
\begin{equation*}
\Gamma_C \ = \ \ker (GL_d(A_C)\to GL_d(\bC))\,.
\end{equation*}
We then obtain a  semidirect  product
\begin{equation*}
GL_d(A_C) \ = \   \Gamma_C  \rtimes  GL_d(\bC)\,.
\end{equation*}
Now,
\begin{equation*}
\Gamma_C \ = \ \{1 + \sum_{n=1}^\infty a_n t^n \mid \sum_{n=1}^\infty \|a_n\|\frac {C^n}{n!} <\infty\}
\end{equation*}
and hence $\Gamma_C$ is contractible and therefore $GL_d(A_C)$ and $GL_d(\bC)$ have the same homotopy type. Thus we conclude
\begin{prop}
On a complex manifold there is natural bijection between topological rank $d$ vector bundles and topological $A_C$-bundles of rank $d$.
\end{prop}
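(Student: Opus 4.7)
The strategy is to realize the canonical map
\[
\{\text{topological } A_C\text{-bundles of rank } d\} \longrightarrow \{\text{topological rank } d \text{ vector bundles}\}
\]
(coming from the continuous projection $GL_d(A_C) \to GL_d(\bC)$, $a \mapsto a_0$) as a bijection by exploiting the split short exact sequence of topological groups
\[
1 \longrightarrow \Gamma_C \longrightarrow GL_d(A_C) \longrightarrow GL_d(\bC) \longrightarrow 1
\]
established above, together with the fact, proved in the preceding paragraph, that $\Gamma_C$ is contractible. Indeed, once the semidirect product structure is in hand, the splitting $GL_d(\bC) \hookrightarrow GL_d(A_C)$ gives an obvious inverse construction on bundles: associated to every principal $GL_d(\bC)$-bundle $P$ is the principal $GL_d(A_C)$-bundle $P \times_{GL_d(\bC)} GL_d(A_C)$.

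For the main step, I would pass to the sheaves of germs of continuous group-valued functions on $X$ and use the resulting short exact sequence
\[
1 \longrightarrow \underline{\Gamma_C}^{\mathrm{top}} \longrightarrow \underline{GL_d(A_C)}^{\mathrm{top}} \longrightarrow \underline{GL_d(\bC)}^{\mathrm{top}} \longrightarrow 1
\]
together with the associated exact sequence of nonabelian Čech cohomology pointed sets. The splitting in the sequence of sheaves induces a splitting of the map
\[
H^1\bl X, \underline{GL_d(A_C)}^{\mathrm{top}}\br \longrightarrow H^1\bl X, \underline{GL_d(\bC)}^{\mathrm{top}}\br,
\]
giving surjectivity at once. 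Injectivity reduces, by the usual twisting argument in nonabelian cohomology, to showing
\[
H^1\bl X, \underline{\Gamma_C}^{\mathrm{top}}\br = \{*\},
\]
that is, every topological principal $\Gamma_C$-bundle on $X$ is trivial.

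The crux of the argument is therefore the triviality of topological $\Gamma_C$-bundles. Since $X$ is a complex manifold, it is paracompact Hausdorff, and since $\Gamma_C$ is a contractible topological group its classifying space $B\Gamma_C$ is contractible (equivalently, by Dold's theorem any numerable principal $\Gamma_C$-bundle over a paracompact base is trivial). Local triviality of bundles on a manifold supplies the numerability hypothesis, so every topological $\Gamma_C$-bundle on $X$ is indeed globally trivial, yielding the desired vanishing. Combining surjectivity and injectivity yields the claimed bijection, and I would note finally that the correspondence sends a $GL_d(A_C)$-cocycle $\{g_{ij}\}$ to its reduction $\{(g_{ij})_0\}$ modulo $t$, which is evidently natural in $X$. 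The only real subtlety is verifying that the Dold-type triviality result applies to the (infinite-dimensional) Banach Lie group $\Gamma_C$, but this follows formally from contractibility and paracompactness.
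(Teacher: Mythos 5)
Your argument is correct and follows essentially the same route as the paper: both rest on the semidirect product decomposition $GL_d(A_C)=\Gamma_C\rtimes GL_d(\bC)$ and the contractibility of $\Gamma_C$. The paper simply concludes from these that $GL_d(A_C)$ and $GL_d(\bC)$ have the same homotopy type and stops there; your nonabelian \v{C}ech cohomology and Dold-theorem elaboration is just a more explicit spelling-out of the same idea.
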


We will now make use of a result of Bungart.
Let B be a Banach algebra and let us write $B^\times$ for the units in $B$. Then
\begin{thm}
\label{A bundles on Stein}
On a Stein space there  is a natural bijection between isomorphism classes of holomorphic $B^\times$-bundles and topological $B^\times$-bundles.
\end{thm}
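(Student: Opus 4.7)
The result is an Oka-Grauert type principle for the (infinite-dimensional) complex Banach Lie group $B^\times$. The plan is to imitate Grauert's proof of the classical Oka principle, replacing finite-dimensional Lie groups by $B^\times$ and using the Banach algebra structure of $B$ to keep all the analytic estimates under control. I would split the statement into two assertions: (a) every topological $B^\times$-bundle on a Stein space $X$ admits a compatible holomorphic structure, and (b) two holomorphic $B^\times$-bundles that are topologically isomorphic are already holomorphically isomorphic. Both are proved by the same machinery, so I focus on (a).

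The basic linearization tool is the exponential map $\exp\colon B\to B^\times$ given by the usual power series, which is a local biholomorphism at $0$ with inverse $\log$. Via $\exp$, a small nonabelian $1$-cocycle with values in $B^\times$ is turned into an additive $1$-cocycle with values in $B$, up to quadratic error terms. One then needs the Banach-valued version of Cartan's Theorem B, namely
\[
H^q\bigl(X;\mathcal{O}_X\widehat{\otimes} B\bigr)=0 \qquad \text{for } q\ge 1,
\]
on any Stein space $X$. This follows by adapting the $\bar\partial$-argument (or the \v{C}ech argument with Cartan--Oka syzygies): $L^2$/H\"ormander estimates and the vanishing of higher cohomology with coherent coefficients go through verbatim for Banach-space valued holomorphic functions. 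Combined with the Runge approximation theorem for $B$-valued holomorphic functions on compact holomorphically convex subsets of $X$ (again a direct extension, since the Oka--Weil proof depends only on Cauchy--Green kernels and uniform convergence), this provides all the linear ingredients.

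The decisive step is a nonabelian Cartan--Grauert splitting lemma: if $U_1,U_2$ are Stein open sets and $g\in \mathcal{O}(U_1\cap U_2; B^\times)$ is sufficiently close to the identity in sup norm on a compact subset, then there exist $g_i\in\mathcal{O}(U_i;B^\times)$ with $g=g_1 g_2^{-1}$. One constructs $g_1,g_2$ by an iterative scheme: write $g=\exp(\xi)$ with $\xi$ small, apply the additive splitting $\xi=\xi_1-\xi_2$ from Theorem B, set $g_i^{(1)}=\exp(\xi_i)$, and replace $g$ by $g_1^{(1)}{}^{-1}g\,g_2^{(1)}$. The Baker--Campbell--Hausdorff expansion in the Banach algebra $B$ shows the new error is quadratic in $\|\xi\|_C$, so iterating gives convergence in any sub-Banach algebra $A_C$-style norm. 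Once this is available, one exhausts $X$ by a Stein compact filtration $K_1\Subset K_2\Subset\cdots$, starts with a topological trivialization on a cover refining the filtration, and inductively modifies the transition cocycle on $K_{n+1}$ so as to coincide with a holomorphic cocycle on a neighbourhood of $K_n$; the splitting lemma together with Runge approximation guarantees the modification can be chosen holomorphic and arbitrarily close to the identity on $K_n$, which is enough for the inductive limit to converge to a global holomorphic cocycle representing the given topological bundle. Injectivity (b) is obtained by applying the same procedure to the Hom-bundle.

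The hard part is the quadratic-convergence estimate in the Cartan--Grauert splitting: one must bound the norms of the successive corrections in a controlled family of Banach norms $\|\,\cdot\,\|_C$, while simultaneously shrinking the domains just enough to absorb the loss coming from Theorem B (the Cartan splitting is not norm-preserving). This is where the Banach \emph{algebra} hypothesis on $B$ is essential, as opposed to just a Banach Lie group structure, because one needs the submultiplicativity $\|ab\|\le \|a\|\,\|b\|$ to control BCH remainders termwise. Everything else is a routine transcription of Grauert's original argument to the Banach-valued setting, and the only truly new input is this uniform estimate, which is the content of Bungart's paper.
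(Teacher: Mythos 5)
There is nothing in the paper to compare against here: Theorem \ref{A bundles on Stein} is imported verbatim from Bungart, with the proof delegated entirely to \cite[Theorem 8.1]{Bu}. Your outline is the standard Grauert-scheme argument for the Banach Lie group $B^\times$ (exponential linearization, Banach-valued Theorem B plus Runge approximation, a nonabelian splitting lemma with quadratic convergence, exhaustion by Stein compacta), and that is indeed the route Bungart follows, so as an account of how the theorem is actually proved your proposal matches the source. Two caveats: the assertion that H\"ormander's $L^2$ estimates ``go through verbatim'' for Banach-space-valued functions is not correct in general (there is no Hilbert structure on $B$); the clean way to get $H^q(X;\mathcal{O}_X\widehat{\otimes}B)=0$ is the \v{C}ech route via nuclearity of $\mathcal{O}(X)$, which you do offer as an alternative. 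Also, the exhaustion step as you describe it only handles cocycles already close to a holomorphic one; passing from an arbitrary topological cocycle requires Grauert's homotopy/continuity method (connect the two cocycles by a continuous family and show the set of good parameters is open and closed), and your deferral of the decisive uniform estimate to ``Bungart's paper'' makes the write-up circular as a self-contained proof, though harmless as a description of the literature.
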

For a proof, see  \cite[Theorem 8.1]{Bu}.

We will apply this result for $B= \gl_d(A_C)$ in which case $B^\times=GL_d(A_C)$.
Set
\begin{equation}
U_i \ = \set{(z_1,\dots, z_N)\in N_\delta}{|z_i|>\delta} \qquad \text{for $i = 1, 2$.}
\end{equation}
The $U_i $ ($i=1,2$) form a Stein cover of $N_\delta$.

As the $U_i$ have the homotopy type of a circle all topological complex vector bundles on them are trivial. Making use of the proposition and theorem above we conclude:
\begin{equation}
\text{The restrictions $\cM_C|_{U_i}$ are trivial}\,.
\end{equation}

We now trivialize the bundle $\cM_C$ on $U_1$ and $U_2$. This way we obtain an element $c\in GL_d(\cA^C_{N_\delta}(U_1\cap U_2))$, i.e., $c$ is a holomorphic function on $U_1\cap U_2$ with values
in $GL_d(A_C)$. As the corresponding formal bundle $\hM$ is trivial on $N_\delta$ this class is trivial in $\oh^1(N_\delta, GL_d(\hcA_{N_\delta}))$ and therefore there are elements $a\in GL_d(\hat\cA_{N}(U_1))$ and $ b\in GL_d(\hat\cA_{N}(U_2))$ such that $c = ab$. The elements $a$ and $b$ are unique up to an element $e\in GL_d(\hat\cA_{N}({N_\delta}))$, i.e., we can replace the pair $(a,b)$ by $(ae, e^{-1}b)$. Note that, by Hartogs' theorem $GL_d(\hat\cA_{N}({N_\delta})) = GL_d(\hat\cA_{N}({N}))$ and so we view $e\in GL_d(\hat\cA_{N}({N}))$.

Our goal is to choose the $e\in GL_d(\hat\cA_{N}({N}))$ in such a way that $ae\in GL_d(\cA_{N}(U_1))$ and $e^{-1}b\in GL_d(\cA_{N}(U_2))$ showing that the bundle $\cM$ is trivial on ${N_\delta}$. In this process we will replacing the $C$ by a smaller constant.

  We write
 \begin{equation*}
 \begin{aligned}
&c=\sum_{n=0}^\infty c_nt^n \qquad c_0\in GL_d(\cO_N(U_1\cap U_2)) \ \  c_n\in \gl_d(\cO_N(U_1\cap U_2)) \ \text{for $n\geq 1$,}
 \\
&a=\sum_{n=0}^\infty a_nt^n \qquad a_0\in GL_d(\cO_N(U_1)) \ \  a_n\in \gl_d(\cO_N(U_1)) \ \text{for $n\geq 1$,}
\\
&b=\sum_{n=0}^\infty b_nt^n \qquad b_0\in GL_d(\cO_N(U_2)) \ \  b_n\in \gl_d(\cO_N(U_2)) \ \text{for $n\geq 1$.}
 \end{aligned}
 \end{equation*}
 The fact that $c=ab$ amounts to the equations
\begin{equation}
c_n = \sum _{i=0}^n a_i b_{n-i}\,.
\end{equation}

As a first step let us deal with the first terms $a_0$, $b_0$, and $c_0$. We replace the term  $c$ by $a_0^{-1}cb_0^{-1}$, the term $a$ by $a_0^{-1}a$, and the term $b$ by $bb_0^{-1}$.  This modification reduces us to the situation where
\begin{equation}
a_0 = 1, \ \ b_0 = 1, \ \ c_0 =1 \,.
\end{equation}
We will now expand the term $a$ in the $z_1$ coordinate and the term  $b$ in the $z_2$ coordinate.
\begin{equation}
\begin{gathered}
a= a^{+}+a^{-}, \qquad  a=\sum _{\ell=-\infty}^\infty g_\ell(z_2, \dots,z_m) z_1^\ell,
\\
a^+ = \sum _{\ell\ge0} g_\ell(z_2, \dots,z_m) z_1^\ell, \quad a^{-} =\sum _{\ell<0} g_\ell(z_2, \dots,z_m) z_1^\ell
\end{gathered}\label{def:a+}
\end{equation}
and
\begin{equation}
\begin{gathered}
b= b^{+}+b^{-}, \qquad  b=\sum _{\ell=-\infty}^\infty h_\ell(z_1,  \hat z_2,\dots,z_m) z_2^\ell,
\\
b^+ = \sum _{\ell\ge0} h_\ell(z_1,  \hat z_2,\dots,z_m) z_2^\ell ,
\quad b^{-} =\sum _{\ell<0}  h_\ell(z_1,  \hat z_2,\dots,z_m) z_2^\ell.
\end{gathered}
\end{equation}

We now choose the element  $e$. We will make use of it in the following form:
\begin{equation*}
e = \prod _{n=1}^\infty (1+e_nt^n)=(1+e_1t)(1+e_2t^2)(1+e_3t^3)\cdots\;,
\qquad e_n\in  \gl_d(\cO_N(N))\,.
\end{equation*}
With a suitable choice of $\{e_k\}_{k\ge1}$, we define
\begin{equation*}
a(k) \in GL_d(\hat\cA_N(U_1),)  \quad b(k) \in GL_d(\hat\cA_N(U_2))
\end{equation*}
such that
\begin{equation*}
a(k+1) = a(k) (1+ e_k t^k) \qquad b(k+1) = (1+ e_k t^k)^{-1} b(k)\,,
\end{equation*}
with the initial conditions $a(1)=a$, $b(1)=b$.
Then we can  easily see  that
$$a(m)_k=a(k+1)_k=a(k)_k+e_k\quad\text{for $m\ge k+1$.}$$

We make the following choice
\begin{equation*}
e_k  =  -a(k)_k^+ \,.
\end{equation*}
The end result is as follows:
\begin{equation}
ae \ \ \text{has the property that for all $n\geq 1$} \ \ \  (ae)_n^{+}\ = \ 0\,.
\end{equation}
Here $a^+=\sum_{n=0}^\infty a_n^+t^n$ is given in \eqref{def:a+}.

Thus, after replacing $a$ by $ae$ and $b$ by $e^{-1}b$ we have
\begin{equation}
\label{crucial}
c \ = \ ab, \qquad a_n^{+}\ = \ 0  \ \ \  \text{for all $n\geq 1$} \,.
\end{equation}
We claim that after this modification both $a$ and $b$ are convergent:
\begin{prop}
If $c\in GL_d(\cA_N(U_1\cap U_2))$ can be written as $c=ab$ with $a\in GL_d(\hcA_N(U_1))$ and  $b\in GL_d(\hcA_N(U_2))$ such that $a_n^+ = 0$ for $n\geq 1$ then  $a\in GL_d(\cA_N(U_1))$ and  $b\in GL_d(\cA_N(U_2))$\,.
\end{prop}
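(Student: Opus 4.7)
My plan is to establish a uniqueness property of the factorization, convert it into an explicit recursive recovery of $a_n$ and $b_n$ from $c$ via Laurent projections in the variable $z_1$, and then argue convergence using the weighted sub-multiplicativity estimate from Lemma~\ref{lem:Banachring}.

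I would begin by showing that the normalization $a_0 = b_0 = 1$ and $a_n^+ = 0$ for $n\geq1$ makes the factorization unique. If $c=ab=a'b'$ with both sides normalized, then $e \seteq (a')^{-1}a = b'b^{-1}$ is defined on $U_1$ and $U_2$ respectively; as the formulas agree on $U_1\cap U_2$, the element $e$ lives on $U_1\cup U_2 = N_\delta$. Since $N\setminus N_\delta$ has codimension two in $N$, Hartogs' theorem extends $e$ to $GL_d(\hcA_N(N))$. Each coefficient $e_n$ is then simultaneously $(+)$ and $(-)$ in $z_1$, so $e_n$ is constant in $z_1$, and the condition $a_n^+=0$ for $n\ge1$ forces $e_n=0$ for $n\ge1$, giving $e=1$.

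Expanding $c=ab$ coefficient by coefficient yields $c_n = a_n + b_n + \sum_{i=1}^{n-1} a_ib_{n-i}$. For $n\ge1$, $a_n$ is purely $(-)$ in $z_1$, while $b_n$ is holomorphic across $z_1=0$ and hence purely $(+)$. Thus applying the Laurent projections $P^\pm$ in $z_1$ gives the recursion
\[
  a_n = P^-\!\bigl(c_n - \textstyle\sum_{i=1}^{n-1} a_ib_{n-i}\bigr),\qquad
  b_n = P^+\!\bigl(c_n - \textstyle\sum_{i=1}^{n-1} a_ib_{n-i}\bigr),
\]
which uniquely determines $(a_n,b_n)$ inductively in $n$ from $c_\bullet$. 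The projections $P^\pm$ are realized by Cauchy integrals over circles $\{|z_1|=r\}$, so on nested relatively compact subsets $K'\subset K\subset U_1\cap U_2$ one has a uniform estimate $\|P^\pm f\|_{K'} \leq M\|f\|_K$, where $M$ depends only on the geometry of the contours.

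To conclude, I would fix a constant $C>0$ for which $c\in\gl_d(\cA_N^C(U_1\cap U_2))$ and choose $C'>0$ sufficiently small. An induction on $n$, using the recursion together with Lemma~\ref{lem:Banachring}(i) in the form $\|uv\|_{C'}\leq \tfrac{p!q!}{(p+q)!}\|u\|_{C'}\|v\|_{C'}$, would establish summability of $\sum_n \|a_n\|_{K'}(C')^n/n!$ and $\sum_n \|b_n\|_{K'}(C')^n/n!$. Cauchy estimates in $z_1$ then propagate the bound on $a_n$ from $K'$ to any compact of $U_1$: since $a_n$ is purely negative in $z_1$, its sup on $\{|z_1|\ge r\}$ is controlled by its sup on $\{|z_1|=r\}$; similarly for $b_n$. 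This delivers $a\in GL_d(\cA_N(U_1))$ and $b\in GL_d(\cA_N(U_2))$. The main obstacle is arranging the estimates in step three so that the convolution in the recursion is absorbed by the factorial denominator in the $A_C$-norm; it is precisely the refined sub-multiplicative estimate with the combinatorial factor $p!q!/(p+q)!$ in Lemma~\ref{lem:Banachring} that allows the induction to close.
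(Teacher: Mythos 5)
Your recursion $a_n = P^-\bigl(c_n - \sum_{i=1}^{n-1} a_i b_{n-i}\bigr)$, $b_n = P^+\bigl(c_n - \sum_{i=1}^{n-1} a_i b_{n-i}\bigr)$ is correct and is in substance the mechanism of the paper's proof, and the factorial bookkeeping $\sum_{i=1}^{n-1}\|a_i\|\,\|b_{n-i}\| \le \sum_{i=1}^{n-1} i!\,(n-i)!\,\tilde D^{\,n} = \epsilon_n\, n!\,\tilde D^{\,n}$ with $\epsilon_n = \sum_{i=1}^{n-1}\binom{n}{i}^{-1}\to 0$ is exactly the engine that makes the induction close. The gap is concentrated in the one sentence where you dispose of the projections: ``on nested relatively compact subsets $K'\subset K$ one has $\|P^\pm f\|_{K'}\le M\|f\|_K$.'' An estimate that strictly shrinks the compact set at each application cannot be iterated through this induction. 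At step $n$ you must estimate the convolution on the \emph{larger} set $K$, but the bounds produced at steps $i<n$ live on the smaller sets; to run the scheme you would need a decreasing chain of annuli $\{|z_1|\ge r_n\}$ with projection constants $M_n\approx r_{n-1}/(r_n-r_{n-1})$, and the requirement $M_n\,\epsilon_n<1$ forces $r_n-r_{n-1}\gtrsim r_1/n$, whose sum diverges — so no convergent choice of radii closes the induction. This is not a cosmetic issue: it is precisely the difficulty the paper's Lemma~\ref{cousin estimate} is designed to overcome.

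What is needed is that the decomposition of $f_n = a_n + b_n$ into its pieces be bounded, with an $n$-independent constant, on \emph{one and the same} compact set $K_{12}$, together with the maximum-principle identities $\|a_i\|_{K_1}=\|a_i\|_{K_{12}}$ and $\|b_i\|_{K_2}=\|b_i\|_{K_{12}}$ which feed the output of step $n$ back into the hypothesis of step $n+1$. The paper obtains this by refining the splitting to $f_n = a_n^- + b_n^+ + b_n^-$, noting that each piece attains its maximum on a different face of the distinguished boundary of $K_{12}$, and using the Schwarz lemma together with $R/r\ge 16$ (Assumption~\ref{large}) to show that on the face where one piece is maximal the other two contribute at most a fixed fraction of it; this yields the universal constant $E\le 8$ of Lemma~\ref{cousin estimate}, and then any $\epsilon<1/(2E)$ closes the induction. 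Your Cauchy-integral approach can be repaired in the same spirit — take the contours for $P^\pm$ to be the two circles $|z_1|=r$ and $|z_1|=R$ lying on the distinguished boundary of $K_{12}$ itself, use $P^- = \mathrm{id}-P^+$ on the face where the direct kernel estimate degenerates, and invoke $R/r\ge 16$ to keep the constants under control — but this same-compact-set bound is the genuinely nontrivial analytic step of the proposition, and the proposal as written does not supply it. (The uniqueness discussion in your first paragraph is correct in substance but not needed, since the factorization is given; also note that an $e_n$ holomorphic across $z_1=0$ and equal to a difference of purely negative parts is $0$, not merely constant in $z_1$.)
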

In particular this proposition implies that the bundle associated to the cocycle $c$ is trivial thus completing the proof. The rest of this section is devoted to the proof of this proposition.

In order to prove the lemma we need to make estimates on an exhaustive family of compact sets. First let us write
\begin{equation}
K \ = \  \{(z_1,\dots, z_m)\in \bC^m \mid |z_i| \leq R \}\subset N
\end{equation}

We choose the families in the following manner
\begin{equation*}
\begin{gathered}
K_1 \ = \ \{(z_1,\dots, z_m)\in K \mid 0 < r \leq |z_1|\} \subset U_1,
\\
K_2 \ = \ \{(z_1,\dots, z_m)\in K \mid 0 < r \leq |z_2|\} \subset U_2,
\\
K_{12}=K_1\cap K_2 = \{(z_1,\dots, z_m)\in K \mid  0 < r \leq |z_1|  \ \ 0 < r \leq |z_2|\} \subset U_1\cap U_2\,.
\end{gathered}
\end{equation*}

In the arguments that follow we make the convention that
\begin{assumption}
\label{large}
We will from now on assume that  $R/r\geq 16$. We can always achieve this by enlarging the compact sets $K_i$ by shrinking the $r$.
\end{assumption}
Note that shrinking the $r$ also force as to consider only those $\delta$ that are sufficiently small.

\begin{rmk}
The choice $R/r\geq 16$ is of course rather arbitrary and simply depends on the way we write the estimates in Lemma ~\ref{cousin estimate}.
\end{rmk}

Let us recall our norm $\|\ \|$ for $\gl_d(A_C)$. We have set
\begin{equation*}
\|f\| \ = \ \sum_{n=0}^\infty \|f_n\| \frac {C^n}{n!}
\quad\text{for $f=\sum_{n=0}^\infty f_n t^n \in \gl_d(A_C)$,}
\end{equation*}
where $\|f_n\|$ denotes the operator norm of $f_n\in \gl_d(\bC)$.

Given a holomorphic function $h\cl U \to  \gl_d(\bC)$ on an open set $U$,
 its norm $\|h\|$ stands for the continuous function on $U$ whose value at $x\in U$ is $\|h(x)\|$.  If  $K\subset U$ is compact,
we write $\|h\|_K$ for the sup norm of $\|h\|$ on $K$, i.e.,
\begin{equation}
\|h\|_K \ = \ \sup_{x\in K} \|h(x)\|\,.
\end{equation}
We will be using this notation for the elements $a$, $b$, and $c$.
As analytic functions achieve their maximum on the boundary of the region,
we make the following observations:
\begin{equation}
\begin{gathered}
\text{maximum on $K_1$ is achieved on}
\\
\{ |z_1| =r,  |z_i| = R\  \text{for}\   i\neq 1 \}\cup\{ |z_i|= R \  \text{for all}\ i \}\,.
\end{gathered}
\end{equation}
Similarly we see that
\begin{equation}
\begin{gathered}
\text{maximum on $K_2$ is achieved on}
\\
\{ |z_2| =r,  |z_i| = R\  \text{for}\   i\neq 2 \}\cup\{ |z_i|= R \  \text{for all}\ i \}\,.
\end{gathered}
\end{equation}
Finally, the maximum on $K_{12}=K_1\cap K_2$ is achieved on
\begin{equation*}
\begin{gathered}
\ \ \{ |z_1| =r, |z_2|=r, \ |z_i| = R\  \text{for}\   i\neq 1,2\}\cup \{ |z_1| =r,  |z_i| = R\  \text{for}\   i\neq 1 \} \cup
\\
\{ |z_2| =r,  |z_i| = R\  \text{for}\   i\neq 2 \}\cup\{ |z_i|= R \  \text{for all}\ i \}\,.
\end{gathered}
\end{equation*}

\begin{rmk}
We will make crucial use of the fact that the maximum on $K_1$ and $K_2$ are achieved on $K_{12}$.
Hence we have $$\|h\|_{K_1} =\|h\|_{K_{12}}\quad\text{for a holomorphic function $h$
defined on $U_1$.}$$
$$\|h\|_{K_2} =\|h\|_{K_{12}}\quad\text{for a holomorphic function $h$
defined on $U_2$.}$$

This allows us to compare the norms on different compact sets.
\end{rmk}

Let us now consider the element  $c\in GL_d(\cA_N(U_1\cap U_2))$. Given the compact set $K_{12}$ there exists a $D>0$ such that
\begin{equation}
\|c_n\|_{K_{12}} \ \leq D^n n! \ \ \ \text{for all} \ n\,.
\end{equation}
Note that our element $c$ actually lies in $GL_d(\cA^C_N(U_1\cap U_2))$ but we do not actually need to use this fact here as we argue on a family of compact sets.
We need to show that  we can find a possibly larger $\tilde D$ such that
\begin{equation}
\label{required estimate}
\|a_n\|_{K_1} \ \leq \tilde D^n n!  \ \ \text{and} \ \  \|b_n\|_{K_2} \ \leq \tilde D^n n!\ \ \ \text{for all} \ n\,.
\end{equation}

We will be proceeding by induction which we will begin at some particular $n_0$ which will be chosen below. Let us now consider the induction step and so we assume that assume that we have the estimate \eqref{required estimate} up to $n-1$.

To this end, we consider the equation
\begin{equation}
c_n =  b_n + \sum _{i=1}^{n-1} a_i b_{n-i}+a_n\,.
\end{equation}
and use it bound the norms of $a_n$ and $b_n$.  Let us first observe that  by the remark above we have
\begin{equation}
\|a_i\|_{K_1} =\|a_i\|_{K_{12}},\quad \|b_i\|_{K_2} =\|b_i\|_{K_{12}}\,.
\end{equation}

Thus all the estimates in the rest of this section can be done on $K_{12}$.
We now have:
\begin{equation}
\| \sum _{i=1}^{n-1} a_i b_{n-i}\|_{K_{12}} \leq \sum _{i=1}^{n-1} \|a_i\|_{K_{12}} \|b_{n-i}\|_{K_{12}}  \leq \sum _{i=1}^{n-1} i! (n-i)! \tilde D^n
\end{equation}

 Let us write
 \begin{equation}
\epsilon_n\ = \  \frac {\sum _{i=1}^{n-1} i! (n-i)!} {n!} \ =
 \  \sum _{i=1}^{n-1}\frac{1} {\binom{n}{i}} \,.
\end{equation}
Then

\begin{equation}
\| \sum _{i=1}^{n-1} a_i b_{n-i}\|_{K_{12}}\leq \epsilon_n \tilde D^n n!
\end{equation}
and we conclude that
\begin{equation}
\label {induction step}
\| c_n - \sum _{i=1}^{n-1} a_i b_{n-i}\|_{K_{12}}\leq D^n n! + \epsilon_n \tilde D^n n! = (D^n  + \epsilon_n \tilde D^n) n!.
\end{equation}

Note that:
 \begin{equation}
\lim _{n \to \infty}\epsilon_n\ = \ 0 \,.
\end{equation}

Thus, given any $\epsilon$ with $0<\epsilon <1$ we will choose $n_0$ and $\tilde D$ such that
\begin{subequations}
\label{choice}
\begin{equation}
\text{\eqref{required estimate} is satisfied for all $n \leq n_0$,}
\end{equation}
\begin{equation}
\text{for $n\geq n_0$ we have  $\epsilon_n<\frac\epsilon 2$, }
\end{equation}
and
\begin{equation}
\begin{gathered}
\text{$\tilde D > \frac {2 D} \epsilon$.}
\end{gathered}
\end{equation}
\end{subequations}
With these choices we can now prove the estimate \eqref{required estimate} by induction beginning with $n_0$ and making use of \eqref{induction step}. So we assume that the estimate \eqref{required estimate} has been proved up to $n-1$. Then
\begin{equation}
\| a_n +b_n\|_{K_{12}} = \| c_n - \sum _{i=1}^{n-1} a_i b_{n-i}\|_{K_{12}}\leq(D^n  + \epsilon_n \tilde D^n) n!
 \,.
\end{equation}
But now, using \eqref{choice}, we see that
\begin{equation}
D^n  + \epsilon_n \tilde D^n < \epsilon \tilde D^n\,.
\end{equation}
Thus, we have
\begin{equation}
\| a_n +b_n\|_{K_{12}}\leq   \epsilon\tilde D^{n} n!\;.
\end{equation}
Now, using \eqref{crucial}, i.e., the fact that $a_n^+=0$ for $n\geq 1$ we conclude, for $n \geq 1$, that
\begin{equation}
\label{induction bound}
\| a_n^- + b_n^-  + b_n^+\|_{K_{12}}\leq  \epsilon\tilde D^{n} n!\;.
\end{equation}
We now apply the following
\begin{lem}
\label{cousin estimate}
 Set   $f_n = b_n^+ +b_n^- + a_n^-$. Then there exists a universal constant $E\leq 8$, only depending on ${K_{12}}$, such that
\begin{equation*}
\|b_n^+ \|_{K_{12}} \leq E \|f_n\|_{K_{12}},
\quad \|b_n^- \|_{K_{12}} \leq E\|f_n\|_{K_{12}},
\quad \|a_n^-\|_{K_{12}}\leq E\|f_n\|_{K_{12}}\,.
\end{equation*}
\end{lem}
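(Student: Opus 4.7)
\medskip\noindent\textbf{Proof plan.}

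The plan is to realize $b_n^+$, $b_n^-$, and $a_n^-$ as explicit two-circle Cauchy integrals of $f_n$ with contours on the Shilov boundary of $K_{12}$, read off pointwise sup norm bounds, and use the maximum modulus principle to convert these to estimates of the full sup norm $\|\cdot\|_{K_{12}}$. The three summands of $f_n$ correspond to the three non-vanishing ``Laurent quadrants'' of $f_n$ in the two variables $z_1,z_2$: for fixed $(z_3,\dots,z_m)$ with $|z_i|\le R$, view $f_n$ as a holomorphic function on the polyannulus $\{\delta<|z_1|,|z_2|<\rho'\}$. Because $b_n$ is holomorphic in $z_1$ near $z_1=0$ and $a_n^-$ is holomorphic in $z_2$ near $z_2=0$, the double Laurent coefficient $c_{i,j}$ of $f_n$ vanishes whenever $i\le-1$ and $j\le-1$. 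The remaining three quadrants are precisely $b_n^+$ (where $i,j\ge0$; it extends holomorphically to all of $N$), $b_n^-$ (where $i\ge0,\,j\le-1$; it extends to $\{|z_1|<\rho',|z_2|>\delta\}$ with $b_n^-\to0$ as $z_2\to\infty$), and $a_n^-$ (where $i\le-1,\,j\ge0$; it extends to $\{|z_1|>\delta,|z_2|<\rho'\}$ with $a_n^-\to0$ as $z_1\to\infty$).

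Partial-fraction expansion of the kernel $((w_1-z_1)(w_2-z_2))^{-1}$ (using $|w_i|>|z_i|$ or $|w_i|<|z_i|$ in each variable independently) and termwise integration against $f_n$ then yield the formulas
\begin{align*}
b_n^+(z_1,z_2,z_3,\ldots)&=\tfrac{1}{(2\pi i)^2}\oint_{|w_1|=R}\oint_{|w_2|=R}\tfrac{f_n(w_1,w_2,z_3,\ldots)}{(w_1-z_1)(w_2-z_2)}\,dw_1\,dw_2\quad(|z_1|,|z_2|<R),\\
b_n^-(z_1,z_2,z_3,\ldots)&=-\tfrac{1}{(2\pi i)^2}\oint_{|w_1|=R}\oint_{|w_2|=r}\tfrac{f_n(w_1,w_2,z_3,\ldots)}{(w_1-z_1)(w_2-z_2)}\,dw_1\,dw_2\quad(|z_1|<R,\,|z_2|>r),\\
a_n^-(z_1,z_2,z_3,\ldots)&=-\tfrac{1}{(2\pi i)^2}\oint_{|w_1|=r}\oint_{|w_2|=R}\tfrac{f_n(w_1,w_2,z_3,\ldots)}{(w_1-z_1)(w_2-z_2)}\,dw_1\,dw_2\quad(|z_1|>r,\,|z_2|<R).
\end{align*}
Each contour lies in the Shilov boundary of the polyannulus structure of $K_{12}$, so the contour values of $|f_n|$ are bounded by $\|f_n\|_{K_{12}}$, and the standard ML-estimate produces pointwise bounds such as $|b_n^+(z)|\le R^2/((R-|z_1|)(R-|z_2|))\cdot\|f_n\|_{K_{12}}$ and analogously for $b_n^-$ and $a_n^-$, valid on the open subset of $K_{12}$ on which the corresponding formula applies.

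Finally, to promote these pointwise bounds to $\|\cdot\|_{K_{12}}$, note that applying the maximum modulus principle in each of $z_1,z_2,z_3,\ldots,z_m$ separately (using that $b_n^\pm$ are holomorphic in $z_1$ on $\{|z_1|<\rho'\}$ and $a_n^-$ is holomorphic in $z_1$ on $\{|z_1|>\delta\}$ vanishing at $\infty$, and symmetrically in $z_2$, and finally that all three are holomorphic in $z_i$ on $\{|z_i|<\rho'\}$ for $i\ge3$) identifies the face of $\partial K_{12}$ on which each sup norm is attained: $|z_1|=|z_2|=R$ for $b_n^+$, $|z_1|=R,|z_2|=r$ for $b_n^-$, and $|z_1|=r,|z_2|=R$ for $a_n^-$ (with the $z_i$, $i\ge3$, all on $|z_i|=R$). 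Because each summand extends holomorphically strictly past its ``bad face,'' the Cauchy integral formula above remains valid with the contour shifted slightly inward-outward across that face, and one may control the contour values of $|f_n|$ on the shifted torus by a re-application of the same representation combined with the already-obtained interior pointwise bounds. The hypothesis $R/r\ge16$ provides precisely the geometric room needed for the shifted-contour factors $R/(R-|z_i|)$, $r/(|z_i|-r)$ appearing in the ML-estimates to remain bounded by a universal constant $\le 8$; this is the main technical obstacle and the origin of the bound $E\le8$.
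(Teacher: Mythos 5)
Your structural observations are correct and coincide with the paper's setup: for fixed $z_3,\dots,z_m$ the function $f_n=b_n+a_n^-$ has a double Laurent expansion in $(z_1,z_2)$ whose $(i\le -1,\,j\le -1)$ quadrant vanishes, the three surviving quadrants are exactly $b_n^+$, $b_n^-$, $a_n^-$, and each attains its sup over $K_{12}$ on the face you name. The gap is in your final step, which is where the whole difficulty of the lemma lives. The ML-estimate you obtain, e.g.\ $|b_n^+(z)|\le R^2\bigl((R-|z_1|)(R-|z_2|)\bigr)^{-1}\|f_n\|_{K_{12}}$, degenerates precisely on the face $|z_1|=|z_2|=R$ where $\|b_n^+\|_{K_{12}}$ is attained, so it yields no bound on $\|b_n^+\|_{K_{12}}$. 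Your proposed repair --- shifting the contour across the bad face --- forces the torus of integration out of $K_{12}$ (to $|w_i|>R$ for $b_n^+$, to $|w_2|<r$ for $b_n^-$, etc.), where nothing controls $\|f_n\|$: the hypothesis only bounds $\|f_n\|_{K_{12}}$, and on the shifted torus $f_n$ is again the sum of the three unknown pieces, so ``controlling it by a re-application of the same representation'' is circular. The same obstruction recurs if you instead write the maximal piece as $f_n$ minus the other two on its maximal face: bounding $b_n^-$ at $|z_1|=|z_2|=R$ by its own integral representation again requires $|w_1|>R$.

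What closes the argument in the paper is a bootstrap that your sketch does not contain. Since $b_n^-$ (resp.\ $a_n^-$) is a pure negative Laurent series in $z_2$ (resp.\ $z_1$), the Schwarz lemma (maximum principle at infinity applied to $z_2b_n^-$, resp.\ $z_1a_n^-$) gives $\|b_n^-(z)\|\le \frac{r}{R}\|b_n^-\|_{K_{12}}$ on $|z_2|=R$, i.e.\ a bound in terms of the \emph{unknown} sup norm of that same piece. One then splits into cases according to which of the three norms is within a factor $4$ of the largest; on the maximal face of the (essentially) largest piece, say $M=\|b_n^+\|_{K_{12}}$, the other two are each at most $\frac{4r}{R}M\le\frac14 M$, so they are absorbed: $M\le\|f_n\|_{K_{12}}+\frac12M$, whence $M\le 2\|f_n\|_{K_{12}}$ and the others are at most $8\|f_n\|_{K_{12}}$. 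This is where $R/r\ge16$ actually enters --- through the Schwarz decay factor $r/R$, not through contour-shift factors --- and it is this self-improvement step, comparing each piece to the unknown norms of the others rather than only to $\|f_n\|$, that your Cauchy-kernel scheme is missing. As written, the claimed constant $E\le 8$ is not established.
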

Let us first argue that this lemma will finish our induction argument. It immediately  implies that
\begin{equation}
\|b_n^+ \|_{K_{12}} \leq E\epsilon\tilde D^{n} n!\;,
\quad \|b_n^- \|_{K_{12}} \leq E\epsilon\tilde D^{n} n! \;,
\quad \|a_n^-\|_{K_{12}} \leq E\epsilon\tilde D^{n} n!\;;
\end{equation}
and furthermore that
\eqn&&
\|b_n \|_{K_{2}}=\|b_n \|_{K_{12}} \leq \|b_n^+ \|_{K_{12}} +\|b_n^- \|_{K_{12}} \ \leq 2E\epsilon\tilde D^{n} n!\;,
\eneqn
and
\eqn  \|a_n^-\|_{K_{1}}= \|a_n^-\|_{K_{12}}
\leq E\epsilon\tilde D^{n} n!\;.
\eneqn
Thus, any choice of $\epsilon< \frac{1}{2E}$ allows us to obtain \eqref{required estimate} and hence  our induction is complete.

It remains to prove the lemma.

\begin{proof}[Proof of Lemma~\ref{cousin estimate}]
We first observe that it is enough to consider the case $m=2$ as it is enough to prove estimates for fixed $z_3,\dots, z_m$ with $|z_i|=R$.
So, we let $m=2$.

We make the following observations:
\begin{subequations}
\begin{equation}
\begin{gathered}
\text{The term $\|b_n^+(z)\|$ obtains its maximum $\|b_n^+\|_{K_{12}}$  on the locus}
\\
 \text{where $|z_1|=R$ and  $|z_2|=R$,}
\end{gathered}
\end{equation}
\begin{equation}
\begin{gathered}
\text{The term $\|b_n^-(z)\|$ obtains its maximum $\|b_n^-\|_{K_{12}}$  on the locus}
\\
\text{where $|z_1|=R$ and $|z_2|=r$,}
\end{gathered}
\end{equation}
\begin{equation}
\begin{gathered}
\text{The term $\|a_n^-(z)\|$ obtains its maximum $\|a_n^-\|_{K_{12}}$  on the locus}
\\
\text{where $|z_1|=r$  and $|z_2|=R$.}
\end{gathered}
\end{equation}
\end{subequations}

We first assume that
\begin{equation}
\label{first case}
\begin{gathered}
 \|b_n^+\|_{K_{12}} \geq
\dfrac 1 4
\max\{\|b_n^-\|_{K_{12}},\|a_n^-\|_{K_{12}}\}.
\end{gathered}
\end{equation}

Then by using the Schwarz lemma we see that
\begin{equation}
\begin{gathered}
\text{On the set where $|z_1|=R$ and  $|z_2|=R$ we have}
\\
\|b_n^-(z)\| \leq \frac {r}{R} \|b_n^-\|_{K_{12}} \leq\frac {4\,r}{R}  \|b_n^+\|_{K_{12}}  \leq \frac 1 {4} \|b_n^+\|_{K_{12}}
\\
\|a_n^-(z)\| \leq \frac {r}{R} \|a_n^-\|_{K_{12}} \leq\frac {4\,r}{R}  \|b_n^+\|_{K_{12}} \leq \frac 1 {4} \|b_n^+\|_{K_{12}}
\,;
\end{gathered}
\end{equation}
the last inequalities follow from our Assumption~\ref{large}.
As $\|b_n^+(z)\|$ assumes is maximum on the set $|z_1|=|z_2|=R$,
we have
$$\|b_n^+\|_{K_{12}}=\|f_n-a_n^--b_n^-\|_{|z_1|=|z_2|=R}
\le \|f_n\|_{K_{12}}+(1/4+1/4)\|b_n^+\|_{K_{12}}.$$
We then conclude
\begin{equation}
 \|b_n^+\|_{K_{12}}\leq  2  \|f_n\|_{K_{12}}\,,
 \end{equation}
 and so then also by \eqref{first case}
  \begin{equation}
 \|b_n^-\|_{K_{12}}\leq  8  \|f_n\|_{K_{12}}\,, \quad \|a_n^-\|_{K_{12}}\leq  8 \|f_n\|_{K_{12}} \,.
\end{equation}
Let us now assume
\begin{equation}
\begin{gathered}
\|b_n^+\|_{K_{12}} <  \frac 1 4\max\{\|b_n^-\|_{K_{12}},\|a_n^-\|_{K_{12}}\}\,.
\end{gathered}
\end{equation}

We first assume that $\max\{\|b_n^-\|_{K_{12}},\|a_n^-\|_{K_{12}}\} = \|b_n^-\|_{K_{12}}$.
Then, again by utilizing the Schwarz lemma, we obtain
\begin{equation}
\label{b estimate for a}
\begin{gathered}
\text{On the set where $r\le|z_2|\le R$ and $|z_1|=R$  we have}
\\
\text{$\|a_n^-(z)\| \leq \frac {r}{R} \|a_n^-\|_{K_{12}} \leq  \frac {r}{R}  \|b_n^-\|_{K_{12}} \leq  \frac {1}{16}  \|b_n^-\|_{K_{12}}$,}
\end{gathered}
\end{equation}
where we have again made use of  assumption ~\ref{large}.
As $\|b_n^-(z)\|$ achieves its maximum on the set where $|z_2|=r$ and $|z_1|=R$,
 we have
$$\|b_n^-\|_{K_{12}}=\|f_n-a_n^--b_n^+\|_{|z_2|=r, |z_1|=R}
\le \|f_n\|_{K_{12}}+(1/16+1/4)\|b_n^-\|_{K_{12}}, $$
which implies
\begin{equation}
\|b_n^-\|_{K_{12}}\leq  2 \|f_n\|_{K_{12}}\,,
\end{equation}
 and hence  also
 \begin{equation}
\|b_n^+\|_{K_{12}}\leq  2 \|f_n\|_{K_{12}}\,,
\quad \|a_n^-\|_{K_{12}}\leq  2 \|f_n\|_{K_{12}}\,.
\end{equation}

Finally, we consider
the remaining case $\max\{\|b_n^-\|_{K_{12}},\|a_n^-\|_{K_{12}}\} = \|a_n^-\|_{K_{12}}
> 4 \|b_n^+\|_{K_{12}}$.
Again by the Schwarz lemma, we obtain
\begin{equation}
\begin{gathered}
\text{On the set where $r\le |z_1|\le R$ and $|z_2|=R$  we have}
\\
\|b_n^-(z)\| \leq \frac {r}{R} \|b_n^-\|_{K_{12}} \leq  \frac {r}{R}  \|a_n^-\|_{K_{12}} \leq  \frac {1}{16}  \|a_n^-\|_{K_{12}}\,.
\end{gathered}
\end{equation}
As $\|a_n^-(z)\|$ achieves its maximum on the set where $|z_1|=r$ and $|z_2|=R$ we then conclude that

\begin{equation}
\|a_n^-\|_{K_{12}}\leq  2 \|f_n\|_{K_{12}},
\end{equation}
which implies
 \begin{equation}
\|b_n^+\|_{K_{12}}\leq  2 \|f_n\|_{K_{12}}, \quad \|b_n^-\|_{K_{12}}\leq  2 \|f_n\|_{K_{12}}\,.
\end{equation}
\end{proof}

\begin{rmk}
For the purposes of the codimension-three conjecture it would have sufficed to prove this theorem for $\dim Y \leq \dim X -3$. Then one can give a slightly different argument which is not substantially different from the argument presented here. We note, however, that in the case $\dim Y \leq \dim X -3$ formal triviality on $W-Z$ amounts to triviality of the associated rank $d$ complex vector bundle.
\end{rmk}

\section{Open problems}

 In this section we discuss open problems which are closely related to our main result.

One can prove the following result.
 \begin{prop}
The category of regular holonomic $\cE_X$-modules is
a full subcategory of the  category of regular holonomic $\hE_X$-modules
\end{prop}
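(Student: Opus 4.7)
The plan is to prove that the formalization functor $\cM\mapsto\hM\seteq\hE_X\otimes_{\cE_X}\cM$ is fully faithful on the category of regular holonomic $\cE_X$-modules. I will treat faithfulness and fullness separately, using the reduction machinery of Section~\ref{The reduction} together with the lattice comparison of Lemma~\ref{lattice comparison}.

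First I would establish faithfulness by showing that the natural map $\cM\to\hM$ is injective whenever $\cM$ is regular holonomic. Choosing a globally defined $\cE_X(0)$-lattice $\cM_0\subset\cM$ (which exists by the Kashiwara--Kawai theorem recalled in Section~\ref{Microdifferential operators}) and applying the finite-morphism reduction of Section~\ref{The reduction}, this is transformed locally into the injectivity of $\rho_*\cM_0\hookrightarrow\hcA_{X'}\otimes_{\cA_{X'}}\rho_*\cM_0$ where $\rho_*\cM_0$ is a locally free $\cA_{X'}$-module by Proposition~\ref{vector bundle} (and its proof). This injectivity is a consequence of the injectivity of the ring map $\cA_{X'}\hookrightarrow\hcA_{X'}$. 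Any morphism $\phi\cl\cM\to\cN$ whose formalization vanishes must then factor through $\ker(\cN\monoto\hN)=0$, so $\phi=0$.

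For fullness, given $\hat\phi\cl\hM\to\hN$, I would proceed via the graph construction. Set $\Gamma\seteq\on{Im}\bl(\id,\hat\phi)\cl\hM\to\hM\oplus\hN\br$, which is a regular holonomic $\hE_X$-submodule of $\hE_X\otimes_{\cE_X}(\cM\oplus\cN)$ that projects isomorphically onto $\hM$ via the first factor. The task is to show that $\Gamma$ descends to a coherent $\cE_X$-submodule $\Gamma_0\subset\cM\oplus\cN$ with $\hE_X\otimes_{\cE_X}\Gamma_0=\Gamma$. Once this is achieved, the first projection $\Gamma_0\to\cM$ becomes an isomorphism (since it is after formalization, by the faithfulness just established), and the composition $\cM\isoto\Gamma_0\to\cN$ with the second projection yields a morphism $\phi\cl\cM\to\cN$ whose formalization is $\hat\phi$.

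The main obstacle is precisely this descent step, since the na\"ive passage from $\hcA_{X'}$-submodules back to $\cA_{X'}$-submodules fails in general. I would handle it by combining the commutative reduction of Section~\ref{The reduction} with the regularity hypothesis: locally, after a quantized contact transformation putting the support in generic position, compatible $\cE_X(0)$-lattices $\cM_0,\cN_0$ (invariant under $\cE_\La(1)$) push forward under $\rho$ to compatible locally free $\cA_{X'}$-lattices equipped with a $\DA_{X'}$-action. Intersecting $\rho_*\Gamma$ with the formalization of the ambient lattice $\rho_*\cM_0\oplus\rho_*\cN_0$ produces a free $\hcA_{X'}$-lattice of $\rho_*\Gamma$, which by Lemma~\ref{lattice comparison} descends uniquely to an $\cA_{X'}$-lattice $\cL\subset\rho_*(\cM\oplus\cN)$. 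The compatibility of the $\DA_{X'}$-action with this descent (which ultimately reflects the fact that regular holonomic structure is determined by purely topological data on $\La$) allows one to define $\rho_*\Gamma_0\seteq\DK_{X'}\cdot\cL\subset\rho_*(\cM\oplus\cN)$, which by Proposition~\ref{coherence from finite morphisms} comes from a coherent $\cE_X$-submodule $\Gamma_0\subset\cM\oplus\cN$ whose formalization is $\Gamma$, completing the proof.
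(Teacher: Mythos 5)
The paper itself offers no proof of this proposition (it appears in the final section prefaced only by ``One can prove the following result''), so I assess your argument on its own merits. The faithfulness half is fine: a global $\cE_X(0)$-lattice together with the finite-morphism reduction shows $\cN\monoto\hN$, and a morphism whose formalization vanishes then vanishes. The problem is the fullness half, specifically the descent of $\Gamma$ to $\Gamma_0$, which is where the entire content of the proposition lives and where there is a genuine gap. Lemma~\ref{lattice comparison} establishes a bijection between $\cA_Z$-lattices of a \emph{given} coherent $\cK_Z$-module and $\hcA_Z$-lattices of its formalization; the descent map is $\cN=\cM\cap\hN$. Here $\rho_*\Gamma$ is an $\hcK_{X'}$-submodule of $\hcK_{X'}\otimes_{\cK_{X'}}\rho_*(\cM\oplus\cN)$ that is \emph{not known} to be the formalization of any $\cK_{X'}$-submodule --- that is precisely what you are trying to prove --- so the lemma does not apply. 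If you nevertheless set $\cL\seteq\rho_*(\cM\oplus\cN)\cap\hL$, there is no reason for $\hcA_{X'}\otimes_{\cA_{X'}}\cL\to\hL$ to be an isomorphism, nor even for $\cL$ to be nonzero: if $\hat\phi$ were a genuinely formal, non-convergent morphism, its graph would meet the convergent part in too small a module (in the rank-one model $\rho_*\Gamma=\set{(u,\hat fu)}{u\in\hcK_{X'}}$ with $\hat f\in\hcK_{X'}\setminus\cK_{X'}$, the intersection with $\cK_{X'}^{\oplus2}$ is $0$). So the construction as written cannot distinguish convergent from non-convergent $\hat\phi$.

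The missing ingredient is the actual analytic assertion hiding in the proposition: every $\hE_X$-linear morphism between the formalizations of \emph{regular} holonomic modules is convergent --- a microlocal analogue of the convergence of formal solutions of systems with regular singularities. This is false without the regularity hypothesis, so any correct proof must use regularity in an essential, quantitative way (e.g.\ via the $\cE_\La(1)$-invariance of the lattices, which yields growth estimates on the coefficients of the $t$-adic expansion of $\hat\phi$ after the reduction to $\DA_{X'}$-modules). Your parenthetical appeal to ``regular holonomic structure is determined by purely topological data'' is a restatement of the conclusion, not an available tool. A secondary issue: you assert that intersecting with the formal ambient lattice yields a \emph{free} $\hcA_{X'}$-lattice of $\rho_*\Gamma$; coherence is all one can expect, and freeness is neither needed nor justified. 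Until the convergence of $\hat\phi$ is actually established, the graph construction only repackages the problem.
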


Thus, it is natural to conjecture:

\begin{conj}
The category of regular holonomic $\cE_X$-modules is equivalent
to the  category of regular holonomic $\hE_X$-modules
\end{conj}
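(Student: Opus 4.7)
The proposition immediately preceding the conjecture provides a fully faithful embedding
$\cM\longmapsto\hE_X\otimes_{\cE_X}\cM$
of the category of regular holonomic $\cE_X$-modules into that of regular holonomic $\hE_X$-modules. What remains is essential surjectivity: given a regular holonomic $\hE_X$-module $\hM$ on an open $U\subset T^*X$ with support in a Lagrangian $\La$, produce a regular holonomic $\cE_X|_U$-module $\cM$ and an isomorphism $\hE_X\otimes_{\cE_X}\cM\simeq\hM$. By Kashiwara-Kawai, $\hM$ admits an $\hE_X(0)$-lattice $\hN$.

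The plan is to build $\cM$ on the complement $U\setminus Y$ of a closed analytic $Y\subset\La$ of codimension at least three in $\La$, and then extend to $U$ using Theorem~\ref{1}. The essential step is the local construction of $\cM$ near a chosen smooth point of each stratum of $\La$. After a quantized contact transformation placing $\La$ in generic position and a finite projection $\rho\colon U\to X'\subset\bC^{n-1}$ onto a Stein polydisk (Section~\ref{The reduction}), the pushforward $\rho_*\hN$ is a coherent $\hcA_{X'}$-lattice of the locally free $\hcK_{X'}$-module $\rho_*\hM$ (Proposition~\ref{vector bundle}). The key input is that, on a Stein manifold, locally free $\hcA_{X'}$-modules of finite rank and locally free $\cA_{X'}$-modules of finite rank are both classified by topological complex vector bundles: for $\cA_{X'}$ this is the statement established in Section~\ref{comparison} using Bungart's Theorem~\ref{A bundles on Stein}; for $\hcA_{X'}$ the analogous fact follows from Grauert's Oka principle applied to $GL_d(\cO_{X'})$, since the kernel of $GL_d(\hcA_{X'})\twoheadrightarrow GL_d(\cO_{X'})$ is a filtered pro-unipotent sheaf of groups with vanishing $H^1$ on Stein opens. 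Consequently $\rho_*\hN$ admits a convergent companion $\cN'$; via Proposition~\ref{prop:equivK} this produces a coherent $\cE_X|_V$-module $\cM_V$ on the neighborhood $V=\rho^{-1}(X')$ together with an isomorphism $\hE_X\otimes_{\cE_X}\cM_V\simeq\hM|_V$. The full faithfulness of the embedding makes these local lifts glue canonically, yielding $\cM$ and an isomorphism on $U\setminus Y$ for some $Y$ of codimension $\geq 3$ in $\La$.

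With $\cM$ in hand on $U\setminus Y$, Kashiwara-Kawai provides an $\cE_X(0)|_{U\setminus Y}$-lattice, so Theorem~\ref{1} extends $\cM$ uniquely to a holonomic $\cE_X|_U$-module supported on $\La$, which remains regular since regularity is a local condition along $\La$. The two $\hE_X|_U$-modules $\hE_X\otimes_{\cE_X}\cM$ and $\hM$ agree on $U\setminus Y$ by construction and are both holonomic with support in $\La$; uniqueness in Theorem~\ref{3} extends the isomorphism to all of $U$.

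The main obstacle lies in the local construction: one must verify that the $\hcA_{X'}/\cA_{X'}$ correspondence for locally free modules of finite rank is natural enough under the finite-projection pushforward $\rho_*$ to yield a genuine $\cE_X|_V$-module whose formalization is exactly $\hM|_V$, and that the resulting local convergent lifts glue compatibly along overlapping coordinate charts and in particular across the codimension-$2$ stratum of $\La$. The gluing is handled by the full faithfulness of the scalar-extension functor, which renders any two local lifts canonically isomorphic and forces the cocycle condition; the compatibility of the formal/convergent correspondence with $\rho_*$ reduces ultimately to the matching of topological invariants of the underlying vector bundles on the two sides.
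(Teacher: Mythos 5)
This statement is one the paper explicitly leaves \emph{open}: in Section 9 the authors record only the full faithfulness of $\cM\mapsto\hE_X\otimes_{\cE_X}\cM$ as a proposition, observe that the two categories agree outside a codimension-one locus of $\La$ ``by direct verification,'' and that Theorem~\ref{9} reduces essential surjectivity to the complement of a codimension-two locus — so the genuine difficulty sits in pure codimension one on $\La$, and no proof is given. Your proposal does not close that gap. The decisive flaw is in the local construction of a convergent lift. After the finite projection $\rho$, what you must produce is not merely a coherent $\cA_{X'}$-module whose formalization is $\rho_*\hN$, but a coherent $\DK_{X'}$-module (equivalently, a module over $\cA_{X'}$ \emph{together with} a convergent action of $t\partial_1,\dots,t\partial_{n-1}$ and $x_n$) whose formalization is $\rho_*\hM$ as a $\hDK_{X'}$-module; only then does Proposition~\ref{prop:equivK} return an $\cE_X$-module, and Proposition~\ref{coherence from finite morphisms}(ii)(b) explicitly requires the lattice to be a $\DA_{X'}$-submodule. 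The Bungart/Oka comparison of locally free $\cA_{X'}$- versus $\hcA_{X'}$-modules (Theorem~\ref{8} and its topological classification) sees only the underlying $\cA$-module and carries no information about the differential-operator action. Producing a convergent companion therefore amounts to showing that a formal isomorphism of microdifferential systems (a formal gauge transformation intertwining the $\DK$-structures) can be replaced by a convergent one — precisely the hard analytic content of the conjecture, and precisely what fails to be known along $\La^1$.

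A secondary but also fatal point: your construction, as written, only manifestly works near points where the comparison is already unproblematic (the smooth locus $\La^0$, where both categories reduce to twisted local systems). If the local lifts are only available there, the bad set $Y$ has codimension one in $\La$, not three, so Theorem~\ref{1} cannot be invoked to extend. To reach the hypotheses of Theorem~\ref{1} (or even of Theorem~\ref{2}/\ref{9}) you would need genuine lifts at every point of $\La^1$, and that is the open problem. The gluing step via full faithfulness is fine in principle, but it cannot substitute for the missing existence of local lifts in codimension one.
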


Let us fix supports and consider the subcategories or regular holonomic $\cE_X$-modules and regular holonomic $\hE_X$-modules
where the objects have a fixed conic Lagrangian support $\La$. One can check that the two categories
coincide outside a codimension one locus in $\La$ by direct verification.
On the other hand, by making use of Theorem~\ref{9}, it suffices to show that the categories are equivalent outside of a codimension two locus. Thus the problem is reduced to pure codimension one locus on $\La$.
\medskip

Recall that we wrote $\Per_\La$ for the stack of  regular holonomic $\cE_X$-modules
in the introduction where we also discussed the general structure of $\Per_\La$.
In particular, in \cite{GMV2} a description of the stack  $\Per_\La$ is given in terms of the geometry of $\La$ outside of a certain codimension two locus $\La^{\geq 2}$. As our main theorem implies that we can ignore a codimension three locus on $\La$ and we know that for any open $U\subset \ct$ the functor $\Per_\La(U)\to\Per_\La(U\setminus\La^{\geq 2})$
is fully faithful we are left with the following problem:
\begin{problem}
Describe, in terms of the geometry of $\La$ the conditions imposed by the codimension two locus $\La^2$ which cut out  the subcategory $\Per_\La(U)$ in $\Per_\La(U\setminus\La^{\geq 2})$.
\end{problem}

Finally, microlocal perverse sheaves can be defined with arbitrary
coefficients.
Thus we can consider the  stack $\Per_\La(\cors)$ of microlocal perverse sheaves
with coefficients in $\cors$
on $\ct$ supported on $\La$. It is natural to conjecture:

\begin{conj}
The codimension-three conjecture holds for the stack $\Per_\La(\cors)$
when $\cors$ is, for example,  $\bZ$ or a field of positive characteristic.
\end{conj}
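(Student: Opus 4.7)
The plan is to adapt the topological framework that implicitly underlies the characteristic-zero proof, replacing the microdifferential module machinery with a direct argument on the derived category of constructible sheaves microlocally supported in $\La$. First, working inductively along the codimension-three stratum, I would reduce to a local statement at a smooth point $y\in Y\subset\La$, and apply a quantized-contact-type normal form (in the topological sense, using a local symplectic coordinate change rather than a quantization) to place $\La$ in generic position near $y$. Writing $j\cl U\setminus Y\imbed U$, the task then becomes proving that for a microlocal perverse sheaf $\cF\in\Per_\La(U\setminus Y;\cors)$, the pushforward $j_*\cF$ lies in the heart of the microlocal perverse $t$-structure on $U$ with coefficients in $\cors$.

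The second step is to formulate this as a local cohomological vanishing. By the already-established codimension-two fully faithful statement \eqref{codim2}, which is purely topological and valid for any coefficient ring, we may pass freely between $\Per_\La(U)$ and $\Per_\La(U\setminus\La^{\ge2})$. Consequently the issue is concentrated in the jump from codimension two to codimension three, and using the GMV2 quiver-and-relations description of $\Per_\La(U\setminus\La^{\ge2})$ (which is defined over an arbitrary base ring), one can translate the extension problem into a problem about extending a module over a finitely presented $\cors$-algebra across a codimension-three locus. This reduction is formal and works uniformly in $\cors$.

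The key technical input is a characteristic-free Cohen-Macaulay property for microlocal perverse sheaves, replacing the algebraic identity $\cext^k_{\cE_X}(\cM,\cE_X)=0$ for $k\ne\dim X$ used in \eqref{CM condition}. I would attempt this in two stages. For $\cors$ a field, I would derive such a property from Verdier self-duality of perverse sheaves combined with the depth estimates built into the perverse $t$-structure: the analogue of $\cH^k_Z(\cM')=0$ for $k<\codim_\La Z$ should follow from a stalkwise argument using the $t$-exactness of $i^!$ and $j^*$ for locally closed embeddings of strata, provided one has sharp enough bounds on the cohomological dimension of the link of $Y$ in $\La$. Once this is available over a field, the case $\cors=\bZ$ would follow by applying the result to $\bZ/p$ and $\bQ$ coefficients separately and reassembling via the short exact sequence $0\to\bZ\xrightarrow{p}\bZ\to\bZ/p\to 0$, together with the universal coefficient spectral sequence controlling the torsion.

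The hard part will be the field case in positive characteristic. In characteristic zero, the analytic Cohen-Macaulay property of holonomic $\cE_X$-modules gives the cohomological vanishing essentially for free, and the rest of the present paper (Theorems~\ref{5} and~\ref{8}) converts this into the geometric extension statement via classical extension theorems of Trautmann--Frisch-Guenot--Siu and the theorem of Bungart; none of these inputs has an obvious positive-characteristic counterpart. A genuinely topological replacement is needed, and the most natural candidate is a local duality statement on the Whitney stratified space $\La$ sharpened by the constraint that the micro-support lies in $\La$. I expect the core obstacle to be proving this duality at the generic points of the codimension-three stratum $Y$, where one cannot appeal to any smoothness of $\La$ and where the characteristic-zero proofs use transcendental techniques (Fredholm theory, Bungart's Oka--Cartan principle, and the Popescu--Bhatwadekar--Rao theorem) with no positive-characteristic analogues. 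A likely route is to analyse nearby-cycle and specialization functors along transverse slices to $Y$, reducing the conjecture to a vanishing statement on a Milnor-fiber-like link whose cohomological dimension can be controlled independently of $\cors$.
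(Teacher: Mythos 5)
This statement is not proved in the paper: it is stated as an open conjecture, and the authors say explicitly that they have neither evidence nor a strategy of proof. Your proposal is therefore a research plan rather than a proof, and it contains a genuine gap at exactly the point where the characteristic-zero argument does its real work. The duality/Cohen--Macaulay input you propose to establish --- the analogue of \eqref{CM condition}, whether via Verdier self-duality over a field or via a universal-coefficient argument over $\bZ$ --- only yields the \emph{uniqueness} of the extension and the full faithfulness statements \eqref{codim1} and \eqref{codim2}, which are already known for arbitrary coefficients. It does not yield \emph{essential surjectivity}, i.e.\ that $j_*\cF$ is again an object of $\Per_\La(U;\cors)$. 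In the paper, that existence statement is precisely what is \emph{not} obtained from the Cohen--Macaulay property: it is obtained by reducing to the commutative extension Theorems~\ref{5}, \ref{8} and~\ref{9}, which in turn rest on the Frisch--Guenot/Trautmann/Siu extension theorem (Theorem~\ref{6}), Bungart's Oka--Cartan principle (Theorem~\ref{intro statement}), and the Popescu--Bhatwadekar--Rao theorem. Your plan acknowledges that none of these has a positive-characteristic counterpart and then substitutes the phrase ``a genuinely topological replacement is needed,'' which restates the open problem rather than solving it.

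Two subsidiary steps are also not sound as written. First, \eqref{codim2} is full faithfulness, not an equivalence, so one cannot ``pass freely'' between $\Per_\La(U)$ and $\Per_\La(U\setminus\La^{\ge2})$ when the issue is whether an object extends; and the quiver description of \cite{GMV2} lives on the complement of $\La^{\ge2}$, so translating the codimension-three extension problem into ``extending a module over a finitely presented $\cors$-algebra across a codimension-three locus'' is not a well-posed reduction --- the algebraic description forgets exactly the geometry along $Y$ that the extension problem is about. Second, for $\cors=\bZ$, membership in the heart of the perverse $t$-structure is not detected by reduction modulo $p$ together with rationalization: the integral perversity conditions involve torsion constraints on costalks that are not controlled by the universal coefficient sequence in the way you suggest, so even granting all field cases the integral case would not follow by the argument sketched.
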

 We have neither any evidence nor a strategy of proof for this conjecture.

\end{document}